\newcommand{\E}{\mathcal{E}}
\DeclareMathOperator{\Id}{Id}
\newcommand{\inv}{^{-1}}
\newcommand{\W}{\mathcal{W}}
\renewcommand{\L}{\mathcal{L}}
\DeclareMathOperator{\Spec}{Spec}
\newcommand{\g}{\mathfrak{g}}
\newcommand{\Mod}{\overrightarrow{\mathbf{Mod}}}
\newcommand{\CMod}{\overleftarrow{\mathbf{Mod}}}
\newcommand{\LR}{\overrightarrow{\mathbf{LRP}}}
\newcommand{\CLR}{\overleftarrow{\mathbf{LRP}}}
\newcommand{\SCn}[1]{\X_{[0,1]^{#1}}}
\newcommand{\SC}{\X_{[0,1]} }
\newcommand{\til}{\widetilde}
\newcommand{\m}{\mathfrak{m}}
\DeclareMathOperator{\Hom}{Hom}
\newcommand{\C}{\mathcal{C}}
\newcommand{\D}{\mathcal{D}}
\newcommand{\X}{\mathfrak{X}}
\newcommand{\B}{\mathcal{B}}
\newcommand{\R}{\mathbb{R}}
\DeclareMathOperator{\pr}{pr}
\renewcommand{\O}{\mathcal{O}}
\newcommand{\BBR}{\mathbb{R}}
\newcommand{\F}{\mathcal{F}}
\newcommand{\grpd}{\rightrightarrows}
\DeclareMathOperator{\Der}{Der}
\DeclareMathOperator{\Fib}{Fib}
\newcommand{\A}{\mathcal{A}}
\newcommand{\Cinf}{\mathcal{C}^\infty}
\DeclareMathOperator{\difer}{d}
\newcommand{\dif}{\difer\!}
\newcommand{\difn}[1]{{\difer}^{#1}\!}
\newcommand{\difnp}[2]{{\difer}^{#1}_{#2}}
\newcommand{\LieRS}{\overrightarrow{\mathbf{LieRS}}}
\newcommand{\CLieRS}{\overleftarrow{\mathbf{LieRS}}}
\newtheorem*{rep@theorem}{\rep@title}
\newcommand{\newreptheorem}[2]{%
\newenvironment{rep#1}[1]{%
 \def\rep@title{#2 \ref{##1}}%
 \begin{rep@theorem}}%
 {\end{rep@theorem}}}
\newtheorem{theorem}{Theorem}
\newtheorem{proposition}[theorem]{Proposition}
\newtheorem{lemma}[theorem]{Lemma}
\theoremstyle{definition}
\newtheorem{definition}[theorem]{Definition}
\theoremstyle{remark}
\newtheorem{remark}[theorem]{Remark}
\newtheorem{example}[theorem]{Example}
\numberwithin{theorem}{section}
\title{On sheaves of Lie-Rinehart algebras}
\author{Joel Villatoro}
\address{Department of Mathematics\\
  KU Leuven\\
  3001, Leuven BE}
\email{joel.villatoro@kuleuven.be}
\begin{document}

\begin{abstract}
We study sheaves of Lie-Rinehart algebras over locally ringed spaces. We introduce morphisms and comorphisms of such sheaves and prove factorization theorems for each kind of morphism. Using this notion of morphism, we obtain (higher) homotopy groups and groupoids for such objects which directly generalize the homotopy groups and Weinstein groupoids of Lie algebroids. We consider, the special case of sheaves of Lie-Rinehart algebras over smooth manifolds. We show that, under some reasonable assumptions, such sheaves induce a partition of the underlying manifold into leaves and that these leaves are precisely the orbits of the fundamental groupoid.
\end{abstract}
\maketitle
\begin{center}
Department of Mathematics\\
Celestijnenlaan 
KU Leuven\\
3001, Leuven BE
\end{center}
\tableofcontents
\section{Introduction}\label{section:introduction}
Lie algebroids are an important class of geometric structures which are commonly used to study certain infinite dimensional Lie algebras. 
However, these tools are limited to infinite dimensional Lie algebras which arise as the vector space of sections of a vector bundle. 
However, many infinite dimensional Lie algebras are not of this kind. 
Despite this, many such algebras still exhibit algebroid-like behavior.
For example, they may be the sections of a sheaf of modules rather than a genuine vector bundle. 
The most notable examples of this behavior are submodules of a Lie algebroid (singular subalgebroids) of which submodules of vector fields (singular foliations) form a well studied class.

Our goal is to study these general algebroid-like objects using the theory of Lie-Rinehart algebras, also known as Lie pseudoalgebras. 
These algebras first appeared in the work of Rinehart\cite{LR_Original}. 
They were studied in more detail by Huebschmann\cite{Hueb} who coined the term Lie-Rinehart. 

Lie-Rinehart algebras are an algebraic axiomatization of the basic properties of Lie algebroids. 
Their algebraic nature means that the general theory of Lie-Rinehart algebras do not provide us with many tools to investigate the more geometric elements of Lie Algebroid theory. 
For example, treating a singular foliation as a Lie-Rinehart algebra hides the geometry of the underlying partition of the manifold into leaves. 
Our solution to this problem is to understand singular foliations, singular algebroids, and other similar objects as \emph{sheaves} of Lie-Rinehart algebras which we call \emph{Lie-Rinehart structures}. 
We will see that many of the geometric elements of Lie Algebroid theory (e.g. foliations and fundamental groups) still make sense in this more general setting.

The main motivations for our study of Lie-Rinehart structures come from the following natural examples:
\begin{itemize}
\item Singular subalgebroids: By singular subalgebroid we mean involutive submodules of the sections of a Lie algebroid. In particular, singular foliations are of interest since they appear naturally in many contexts such as the orbits of a group action or as the constraints of motion in control theory.
\item Direct/projective limits of Lie algebroids: Examples of this include the infinite jet bundle of a Lie algebroid. The infinite jet bundle is an `algebroid' which arises as the projective limit of a sequence of Lie algebroid.
\item Fiber products and quotients: Certain desirable operations often fail in the context of Lie algebroids. For example, pullback algebroids, fiber products, and other operations involving Lie algebroids frequently fail to be smooth. However, they are usually perfectly well-defined sheaves of Lie-Rinehart algebras.
\end{itemize}
In this paper, we will explore the general theory of Lie-Rinehart structures which captures all of these examples under a unifying framework. We will first consider Lie-Rinehart structures on locally ringed spaces before proceeding to Lie-Rinehart structures on smooth manifolds. 

Although this paper formally works with locally ringed spaces over \( \R \), an argument can be made that a more natural setting would be locally \( \Cinf \)-ringed spaces which are often used in synthetic differential geometry. Indeed, it seems that essentially all of the theorems in this document can be directly transported to that setting with almost no modification. Ultimately, since there is not yet a clear advantage for using \( \Cinf \)-rings in this context we have elected to use the less technical of the two formalisms.

\subsection*{Main Results}
The first of our main theorems concerns the general case of Lie-Rinehart structures on a locally ringed space:
\begin{theorem}\label{theorem:weinstein.groupoid}
Suppose \( \A \) is a sheaf of Lie-Rinehart algebras over a locally ringed space \(X \) over \( \R \). There exists a functorial construction of a groupoid \(\Pi(\A) \grpd X \) over \( X \) which extends the construction of the fundamental/Weinstein groupoid of a Lie Algebroid. That is, when \( X \) is a manifold and \( \A \) is the sheaf of sections of a Lie algebroid then \( \Pi(\A) \) is canonically isomorphic to the fundamental groupoid of the underlying algebroid.
\end{theorem}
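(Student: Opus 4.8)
The plan is to reproduce the $A$-path / $A$-homotopy construction of the Weinstein groupoid, but phrased entirely through morphisms out of the canonical Lie-Rinehart structures $\SCn{n}$ carried by the cubes $[0,1]^n$, so that neither a vector bundle nor a connection is ever needed. First I would fix the test objects: the sheaf of vector fields $\SC = \X_{[0,1]}$ on the interval and, more generally, $\SCn{n}$ on $[0,1]^n$, each viewed as a Lie-Rinehart structure over the corresponding cube with anchor the identity. An \emph{$\A$-path} is then an arrow relating $\SC$ and $\A$ in the appropriate morphism category ($\LieRS$ or $\CLieRS$, depending on the chosen variance), covering a continuous base path $\gamma \colon [0,1] \to X$; to make concatenation and the homotopy relation well behaved I would impose the standard sitting-instant condition, demanding that the arrow be degenerate (its base path locally constant) near the endpoints $t = 0, 1$. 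The source and target of such a path are the points $\gamma(0)$ and $\gamma(1)$ of $X$.

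Next I would define an \emph{$\A$-homotopy} between two $\A$-paths as an arrow out of $\SCn{2}$ covering a homotopy of the base paths, restricting to the two given paths along the horizontal edges and sitting (degenerate) along the vertical edges. The core of the argument is to check that this relation is an equivalence relation and that it descends to a groupoid. Reflexivity comes from the degenerate homotopy induced by the projection $[0,1]^2 \to [0,1]$; symmetry from composing a homotopy with the flip of the square; and transitivity --- the genuinely delicate point --- from gluing two homotopies along a shared edge into a single arrow out of $\SCn{2}$. Concatenation of paths and the remaining groupoid operations (composition, inversion by path reversal, units by constant paths) are built the same way, with the associativity, unit, and inverse laws holding only up to $\A$-homotopy, exactly as for the Weinstein groupoid of a Lie algebroid.

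I expect this gluing-and-reparametrization step to be the main obstacle. Since $\A$ is neither the sections of a bundle nor equipped with a connection, the PDE characterization of $A$-homotopy used by Crainic--Fernandes is unavailable, and transitivity must be established intrinsically. The tools I would rely on are the factorization theorems for morphisms and comorphisms together with the locality of $\A$ as a sheaf: an arrow out of $\SCn{2}$ is determined by its restrictions to an open cover, so two homotopies that agree on a common edge can be amalgamated after a sitting-instant reparametrization chosen to guarantee that the glued map is again a morphism of Lie-Rinehart structures. Functoriality is then comparatively formal: any arrow $\A \to \B$ of Lie-Rinehart structures induces $\Pi(\A) \to \Pi(\B)$ by postcomposition on paths and homotopies, visibly preserving identities, composition, source, target, and the groupoid operations.

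Finally, to confirm that $\Pi$ extends the Weinstein groupoid, I would specialize to $X = M$ a smooth manifold and $\A = \Gamma(A)$ the section sheaf of a Lie algebroid $A \to M$. Using the locality of sheaf morphisms and a Serre--Swan-type identification, I would show that arrows $\SC \to \Gamma(A)$ correspond bijectively and naturally to Lie algebroid morphisms $T[0,1] \to A$, that is, to $A$-paths in the sense of Crainic--Fernandes, and likewise that $\A$-homotopies correspond to $A$-homotopies. Because this correspondence is compatible with concatenation, reversal, and constant paths, it descends to a canonical isomorphism $\Pi(\A) \isom \Pi(A)$ of groupoids over $M$, which is the desired extension. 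This comparison is where the earlier structural results connecting morphisms of a section sheaf to honest bundle maps carry the weight.
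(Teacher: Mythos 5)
Your proposal is correct and takes essentially the same route as the paper: LR paths and homotopies are defined as morphisms out of the cube structures \( \X_{[0,1]^n} \), the groupoid operations are built from lazy (sitting-instant) reparameterizations and concatenation glued via the sheaf axioms, functoriality is postcomposition, and the algebroid case follows from identifying such morphisms with Crainic--Fernandes \( A \)-paths and \( A \)-homotopies. The only cosmetic differences are that the paper does not impose sitting instants in the definition of a path (it shows instead that every homotopy class has a lazy representative) and its gluing step uses only the locality of sheaves rather than the factorization theorems.
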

After the explanation of our construction, the proof of Theorem~\ref{theorem:weinstein.groupoid} will essentially be a direct corollary of Theorem~\ref{theorem:lie.rinehart.groups.well.defined} where we show that the fundamental groupoid is well-defined.

This theorem suggests that we have successfully constructed the notion of an `integration' of a sheaf of Lie-Rinehart algebras. However, there are some limitations to our integration procedure that we should mention. For one, the groupoid we construct is purely set-theoretic (for now) and at the time of the writing of this document, there is no known differentiation functor to accompany this notion of integration. Work by Androulidakis and Zambon~\cite{AMsheaf} is a partial result in that direction. Also notable is a more algebraic way of constructing an integration of a Lie-Rinehart algebra which appears in another preprint by Ardizzoni, Kaoutit, and Saracco~\cite{ardizzoni2020differentiation}.

In addition to the fundamental groupoid, we are also able to define homotopy groups for Lie-Rinehart structures. Just like the fundamental groupoid, these homotopy groups generalize the notion of the algebroid homotopy groups in the case that the sheaf is the sections of a vector bundle. In the case of algebroids, these homotopy groups are known to control several important invariants such as the obstruction to integrability~\cite{CrFeLie}. An interesting point of comparison arises in a preprint by Laurent-Gengoux, Lavau and Strobl\cite{SylvainArticle} where they define higher homotopy groups for a reasonably general class of singular foliations. Since singular foliations are a special case of Lie-Rinehart structures, it is reasonable to wonder whether the invariants constructed in this paper coincide with their work. At this time, the author does not know the answer to this question.

For manifolds, we can identify a class of particularly nice Lie-Rinehart structures which we call \emph{adjoint integrable}. Essentially, an adjoint integrable sheaf is one for which the adjoint representation of the associated Lie algebra can be exponentiated to a time-dependent family of automorphisms. Notably, any (locally) finitely generated involutive submodule of a Lie algebroid is adjoint integrable as well as many non-finitely generated examples.

Our second main result regards adjoint integrable Lie-Rinehart algebras:
\begin{theorem}\label{theorem:partition.into.leaves}
Suppose \( \A \) is a adjoint integrable sheaf of Lie-Rinehart algebras over a manifold \( M \). Then \( \A \) induces a singular foliation on \( M \) and the isomorphism classes of \( \Pi(\A) \grpd M \) are precisely the associated partition of \( M \) into leaves.
\end{theorem}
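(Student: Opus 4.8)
The plan is to split the statement into two tasks: first exhibiting the singular foliation induced by $\A$, and then identifying its leaves with the isomorphism classes (i.e. orbits) of $\Pi(\A) \grpd M$. Throughout I would work with the anchor $\rho$ of the sheaf of Lie-Rinehart algebras, viewed as a morphism into the sheaf $\T_M$ of vector fields on $M$ (the derivations of $\Cinf_M$).

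For the foliation, I would set $\F := \rho(\A) \subseteq \T_M$, the image of the anchor. Since $\rho$ is simultaneously $\Cinf_M$-linear and a morphism of sheaves of Lie algebras, $\F$ is an involutive $\Cinf_M$-submodule of vector fields, because $[\rho(a),\rho(b)] = \rho([a,b]) \in \F$. The extra input needed to pass from involutivity to an honest partition into leaves is adjoint integrability: the exponentiated automorphisms $\Phi_t$ of $\A$ cover the flows of the vector fields $\rho(a)$ and intertwine with the anchor, so each such flow carries $\F$ into $\F$. This self-invariance of $\F$ under the flows of its own elements is precisely the integrability hypothesis of the Stefan--Sussmann theorem, which then produces a partition of $M$ into immersed leaves, namely the orbits of the pseudogroup generated by the flows of elements of $\F$.

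For the identification of leaves with orbits I would prove the two inclusions separately. The inclusion of orbits into leaves is the easy direction: any arrow of $\Pi(\A)$ from $x$ to $y$ is represented by a Lie-Rinehart path whose base path $\bar\gamma$ satisfies $\dot{\bar\gamma}(t) = \rho(a_t)|_{\bar\gamma(t)} \in \F_{\bar\gamma(t)}$, so $\bar\gamma$ is everywhere tangent to $\F$ and therefore confined to a single leaf; hence $y$ lies in the leaf of $x$.

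The reverse inclusion --- every point of the leaf of $x$ is the target of some arrow out of $x$ --- is the main obstacle, and it is here that adjoint integrability does the real work. Given $y$ in the leaf of $x$, Stefan--Sussmann provides a concatenation of integral curves of vector fields $\rho(a_1),\dots,\rho(a_n) \in \F$ joining $x$ to $y$. I would lift each segment to an arrow of $\Pi(\A)$ by exponentiating the corresponding time-independent section $a_i$ through the adjoint-integrability data, yielding a Lie-Rinehart path whose base path is exactly that integral curve; composing these arrows in $\Pi(\A)$ gives an arrow from $x$ to $y$. The delicate points are that the $a_i$ are in general only local sections, so one must patch using the sheaf structure and restrict to the domains where the relevant flows are defined, and that one must verify the exponentials genuinely define morphisms in $\Pi(\A)$ as constructed in Theorem~\ref{theorem:weinstein.groupoid} rather than merely formal objects. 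Once both inclusions are in hand, the isomorphism classes of $\Pi(\A)$ coincide with the leaves of $\F$, completing the proof.
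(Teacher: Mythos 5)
Your proposal is correct and follows essentially the same route as the paper: the paper also sets \( \F = \rho(\A) \) and obtains the foliation from its Stefan--Sussmann theorem (Theorem~\ref{thm:frob.int.foliation}, whose proof uses adjoint integrability exactly as you describe), derives orbits \( \subseteq \) leaves from anchor compatibility of LR paths, and proves the reverse inclusion by lifting each integral curve of \( \rho(a) \) to an LR path. The only cosmetic difference is that the paper writes this lift down directly as the constant path \( G\left(\frac{\partial}{\partial t}\right) := 1 \otimes a \) over the flow line, rather than exponentiating via the adjoint flow, so adjoint integrability is needed only in the foliation step and not for the lifting itself.
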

Theorem~\ref{theorem:partition.into.leaves} certainly suggests that adjoint integrable Lie-Rinehart algebras indeed seem to behave very similarly to classical algebroids. This similarity  merits further investigation beyond the scope of this paper. There are plenty of examples of adjoint integrable Lie-Rinehart structures. For instance, any locally finitely generated singular subalgebroid is automatically adjoint integrable. There are many infinitely generated examples as well.
\subsection*{Outline}
In Section~\ref{section:lie.rinehart.algebras} we will give an exposition of some of the basic elements of the theory of Lie-Rinehart algebras also known as Lie-Rinehart pairs or Lie pseudoalgebras. In \cite{hm1993}, Higgins and Mackenzie noted the existence of two natural notions of morphism between Lie-Rinehart algebras which they called morphisms and comorphisms. They also proved a factorization theorem for Lie-Rinehart morphisms. In Section~\ref{section:lie.rinehart.algebras} we will complete the picture with Theorem~\ref{theorem:lie.rinehart.pair.comorphism.factors} which is a factorization result for Lie-Rinehart morphisms.

Section~\ref{section:ringed.spaces}, we will discuss the basics for locally ringed spaces and sheaves of modules over locally ringed spaces. We will assume that the reader is mostly familiar with these concepts. A much more detailed discussion of these topics can be found in most standard texts on algebraic geometry such as ~\cite{hartshorne} or \cite{stacks-project}. We will also address the topic of \emph{base-changing} morphisms and comorphisms of sheaves of modules. That is, morphisms of sheaves of modules where the underlying space is allowed to change.

Section~\ref{section:fiber.determined.modules}, is dedicated to looking at a special case of sheaves of modules which we call \emph{fiber determined}. In short, a fiber determined sheaf of modules is a sheaf of modules for which the sections are completely determined by their restriction to each point. The main result of this section is Theorem~\ref{lemma:geometric.module.has.calculus} which says, roughly, that it is possible to differentiate and integrate time-dependent sections of fiber determined modules over manifolds.

Section~\ref{section:lie.rinehart.structures} discusses sheaves of Lie-Rinehart algebras over locally ringed spaces. We call such things Lie-Rinehart structures. A Lie-Rinehart structure directly generalizes the notion of a Lie algebroid. When the underlying sheaf of modules is the sheaf of sections of a vector bundle, these two notions coincide. We define a notion of both morphism and comorphisms of Lie-Rinehart structures. In Theorems~\ref{theorem:lie.rinehart.structure.morphism.factors} and \ref{theorem:lie.rinehart.pair.comorphism.factors} we see that the factorization of Lie-Rinehart algebra morphisms and comorphisms give rise to factorization systems at the level of sheaves. 

Section~\ref{section:frobenius.integrability} addresses a particularly nice class of Lie-Rinehart structures on smooth manifolds which we call \emph{adjoint integrable}. In short, adjoint integrable means that the adjoint representation of the Lie algebra of sections is `integrable' to a time-dependent family of automorphisms. We will see several examples of adjoint integrable Lie-Rinehart structures. With Theorem~\ref{theorem:frobenius.int.adjoint.flow}, we learn that sections of a adjoint integrable Lie-Rinehart structure give rise to automorphisms of the sheaf. Next is Theorem~\ref{thm:frob.int.foliation} proves the existence of a foliation on the underlying manifold. Finally, Theorem~\ref{thm:fin.gen.frob.int} shows that any fiber-determined and locally finitely generated Lie-Rinehart structure on a smooth manifold is automatically adjoint integrable. This last result is particularly relevant to the study of singular foliations where finitely generated subsheaves of vector fields are a frequent topic of study.

Section~\ref{section:lie.rinehart.homotopy} provides us with the basic definitions for the homotopy theory of Lie-Rinehart structures. In the case where the Lie-Rinehart structure is the sheaf of sections of a Lie algebroid, this homotopy theory is the same thing as algebroid homotopy or \( \A\)-homotopy. There will be a series of examples where we see explain how the usual elements of homotopy theory such as constant homotopies and concatenation of homotopies can be defined in this setting.

The last section, Section~\ref{section:homotopy.groups} is dedicated to constructing the (higher) homotopy groups and fundamental groupoids of a Lie-Rinehart structure. The main result is the proof the group(oid) operation is well-defined. We will also see Theorem~\ref{theorem:weinstein.groupoid} and Theorem~\ref{theorem:partition.into.leaves} arise as immediate corollaries of our construction.

\subsection*{Acknowledgments}
The author would like to thank Marco Zambon for the numerous discussions which on this topic over the years. He also appreciates the contributions of Alfonso Garmendia who was a coauthor on \cite{villatoro2019integration} which served as a starting point for this project. Finally, the author is indebted to his former PhD advisor Rui Fernandes who provided much useful feedback. This work was supported by funding from Fonds Wetenschappelijk Onderzoek (FWO Project G083118N).

\subsection*{Notation}
\begin{itemize}
\item Letters \( X \) and \( Y \) will be used to denote ringed spaces with structure sheaves \( \O_X \) and \( \O_Y \).
\item Given a map of ringed spaces \( f \colon X \to Y \), we write \( f^\# \) to denote the associated ring homomorphism.
\item \( M \) and \( N \) will be used for smooth manifolds. 
\item Given a \emph{smooth} function \( f \colon M \to N \) we will write \( f^\# \) to denote the pullback homomorphism for smooth functions.
\item The symbols \( \E \) and \( \W \) will denote sheaves of modules.
\item The symbols \( \A \) and \( \B \) will denote sheaves of Lie-Rinehart algebras.
\item Given a fiber bundle \( \pi \colon E \to X \) we write \( \Gamma_E \) to denote the sheaf of sections.
\end{itemize}
\section{Lie-Rinehart Algebras}\label{section:lie.rinehart.algebras}
The notion of a Lie-Rinehart algebra can be obtained by starting with a Lie algebroid and then retaining only the associated algebraic structure. 
The origin of the notion appears originally in a paper of Rinehart~\cite{Rinehart} and Huebschmann\cite{Hueb} gave Lie-Rinehart algebras their name. 
In this section we will review the purely algebraic theory of Lie-Rinehart algebras.
\subsection{Modules}
We will begin by a discussion of morphisms and comorphisms of modules of \( \R \)-algebras. 
Throughout this text, we will assume all algebras are commutative, unital algebras over the real numbers. 

We need to allow our morphisms to be well defined between modules with different underlying rings. 
To that end, we will follow the approach of Higgins and Mackenzie\cite{hm1993}. 
Higgins and Mackenzie noted that there are actually two natural notions of base changing morphisms of modules. 
These two notions are, in a sense, dual to one another.
\begin{definition}\label{defn:module.pair}
A \textbf{module pair} is a pair  \( (M, R) \) where \( R \) is an \( \R \)-algebra and \( M \) is an \( R \)-module. A \textbf{morphism} of module pairs:
\[ (F, \phi ) \colon (M,R) \to (N,S) \] 
consists of a morphism of algebras \( \phi \colon R \to S \) and a morphism of \( R\)-modules \( F \colon M \to N \).

A \textbf{comorphism} of module pairs:
\[ (F, \psi) \colon (M,R) \to (N,S) \] 
consists of a homomorphism \( \psi \colon S \to R  \) and a morphism of \( R \)-modules \( F \colon M \to R \otimes_S N \). 
\end{definition}
We will write \( \Mod \) and \( \CMod \) to denote the categories of module pairs with morphisms and comorphisms as arrows, respectively. 
There is a natural duality between these categories. 
Given a morphism \( (F,\phi) \colon (M,R) \to (N,S) \) in \( \Mod \) one can construct a comorphism: 
\[ (F^*, \phi) \colon ( \Hom_S(N,S) , S) \to (\Hom(M,R) , R)   \]
This construction is actually contravariant functor \(\Mod \to \CMod \). 
If we restrict to finitely generated projective modules, this contravariant functor is an equivalence of categories.

Let us see some examples of morphisms and comorphisms in a geometric context.
\begin{example}\label{example:module.pair.vector.bundle}
Suppose \( E \to X \) is a vector bundle over a smooth manifold \( X \). 
Let \( U \) be an open subset of \( X \). The natural restriction homomorphism: 
\[  (\Gamma_E(X), C^\infty_X(X)) \to (\Gamma_E(U), \Cinf_X(U))  \]
yields a morphism of module pairs.
\end{example}
\begin{example}\label{example:module.pair.comorphism.vb.morphism}
Suppose \( E \to X \) and \( W \to Y \) are a vector bundles and \( T \colon E \to W \) is a vector bundle morphism covering \( f \colon X \to Y \). Write \( f^* W := W \times_X Y \) to denote the pullback bundle.
There is a natural identification: 
\[ \Gamma_{f^* W} (X) \cong C^\infty_X(X) \otimes_{C^\infty_Y(Y)} \Gamma_W( Y )  \] 
and any vector bundle morphism has a pushforward operation: 
\[ T_* \colon \Gamma(E) \to \Gamma( f^* W)  \] 
Therefore:
\[ (T_*, f^\#) \colon (\Gamma(E), C^\infty(X) ) \colon ( \Gamma(W) , C^\infty(Y) ) \]
is a comorphism of module pairs.  
\end{example}
\subsection{Lie-Rinehart pairs}
Briefly, a Lie-Rinehart pair is a module pair \( (\A, R) \) which has been equipped with a Lie algebra structure on \( \A \) and an \( \A \)-module structure (as a Lie algebra) on \( R \) which is compatible with the \( R \)-module structure on \( \A \) via a Leibnitz identity. The model example one should keep in mind for Lie-Rinehart pairs is the pair consisting of vector fields on a manifold together with smooth functions \( (\mathfrak{X}_M(M), \Cinf_M(M)) \).
\begin{definition}\label{definition:lie.rinehart.pair}
A \textbf{Lie-Rinehart(LR) pair} is a module pair \( (\A, R) \) with the following additional data:
\begin{itemize}
\item a Lie bracket on \( \A \):
\[ \A \times \A \to \A  \qquad  (a, b) \mapsto [a, b]  \]
\item a Lie algebra homomorphism called the \textbf{anchor map}: 
\[ \rho_\A \colon \A \to \Der(R) \qquad a \mapsto \L_a \]
\end{itemize}
such that for all \( v, w \in \A \) and \( r \in R \):
\[ [ v, rw]  = r [v, w] + \L_a(r) w \]
\end{definition}
\begin{example}\label{example:lie.rinehart.pair.algebroid}
Let \( \mathcal{A} \to M \) be a Lie algebroid with anchor map \( \rho \colon \mathcal{A} \to TM \). Then \( (\Gamma_\A(M), C^\infty(M)) \) is an LR pair.
\end{example}
\begin{example}\label{example:lie.rinehart.pair.derivations}
Suppose \( R \) is a commutative unital algebra. Then \( (\Der(R) , R) \) is canonically a LR pair.
\end{example}
One can also construct Lie-Rinehart structures out of a differential on an exterior algebra. 
To the authors knowledge, this correspondence was originally observed by Vaintrob\cite{Vaintrob} in the context of exterior algebras of vector bundles.
\begin{example}\label{example:lie.rinehart.pair.dga.dual}
Suppose \( R \) is a commutative unital algebra and let \( E \) be an \( R \)-module. Suppose we are given a differential \( \difer \colon \wedge^\cdot E \to \wedge^\cdot E \) on the exterior algebra of \( E \). In other words, \( \difer \) is a degree 1 graded derivation which squares to zero. Let \( \A := \Hom(E,R) \). Then:
\[ \rho \colon \A \to \Der(R) \qquad \L_a r := \langle a , \dif r \rangle \]
We can also define a Lie bracket \( A \times \A \to \A \):
\[\langle [a_1, a_2 ], e \rangle := \L_{a_1} \langle a_2 ,e \rangle - \L_{a_2} \langle a_1, e \rangle - \langle a_1 \wedge a_2 , \dif e \rangle  \]
A standard calculation shows that these operations satisfy the Leibniz rule and Jacobi identity. Therefore \( (\A , R) \) is an LR pair.
\end{example}
Recall that there are two notions of morphism for module pairs. We should not be surprised to learn that there are two corresponding notions of morphism for Lie-Rinehart pairs. Each one is useful in the appropriate context and come with their own tradeoffs.
\begin{definition}\label{definition:lie.rinehart.pair.morphism}
Suppose \( ( \A, R) \) and \( (\B, S) \) are LR pairs. A \textbf{Lie-Rinehart(LR) morphism} \( (F, \phi ) \colon ( \A, R) \to (\B ,S) \) is a morphism of module pairs such that:
\begin{enumerate}
\item (compatibility with the anchor) for all \( a \in \A \) and \( r \in R \):
\[  \phi (\L_a(r)) = \L_{F(a)} \phi(r).  \]
Equivalently, the following diagram commutes:
\[ \begin{tikzcd}
\A \arrow[d] \arrow[r, "F"]&   \B \arrow[d] \\
\Der(R) \arrow[r]& \Der(R,S)  
\end{tikzcd}  \]
\item (compatibility with the bracket) \( F \colon \A \to \B \) is a Lie algebra homomorphism.
\end{enumerate}
The category of LR pairs with LR morphisms is denoted \( \LR \)
\end{definition}
This is the simplest notion of morphism that exists between LR pairs. However, it does not correspond to the usual notion of morphism of Lie algebroids. Indeed, recall that we saw earlier that morphisms of module pairs do not generally give rise to morphisms of vector bundles.
\begin{example}\label{example:lie.rinehart.pair.morphism.algebroid.restriction}
Suppose \( X \) is a manifold and \( A \to X \) is a Lie algebroid. Given an open subset \( U \subset X \) the restriction morphism \( (\Gamma_A(X) , \Cinf_X(X)) \to (\Gamma_A(U), \Cinf_X(U) ) \) is a LR morphism.
\end{example}
The correct algebroid analogue to a morphism of LR pairs is called a \emph{comorphism} of Lie algebroids\cite{hm1993}. Comorphisms appear mostly in the context of Poisson geometry and Lie algebroid actions.
\begin{example}\label{example:lie.rinehart.pair.morphism.algebroid.comorphism}
Suppose \( A \to X \) and \( B \to Y \) are Lie algebroids. Suppose \( (\Phi, f) \colon A \to B  \) is a Lie algebroid comorphism. This means that \( f \colon Y \to X \) is a smooth map and \( \Phi \colon f^* A \to B \) is a vector bundle map such that:
\[ \Phi^\dag \colon  \Gamma_A(X) \to \Gamma_B(Y)  \qquad \alpha \mapsto \Phi(f^*\alpha) \]
is a Lie algebra homomorphism and for all \( y \in Y \) and \( a \in A_{f(y)} \):
\[  \difnp{}{y} f \circ \rho_B \circ \Phi(y,a)  =  \rho_A(a)    \]

In this case, the pair \( (\Phi^\dag , f^\sharp) \colon  (\Gamma_A(X),\Cinf_X(X)) \to (\Gamma_B(Y) , \Cinf_Y(Y)) \) is a morphism of LR pairs. 
\end{example}
\begin{example}\label{example:lie.rinehart.pair.morphism.poisson.map}
Suppose \( (M, \pi_M) \) and \( (N, \pi_N ) \) are Poisson manifolds and \( f \colon M \to N \) is a Poisson map. The associated map \(F \colon \Omega^1(M) \to \Omega^1(N) \) is a Lie algebra homomorphism and the pair \( (F, f^\#) \) is a morphism of Lie-Rinehart algebras. 
\end{example}
In order to study ordinary Lie algebroid morphisms, we must turn to LR comorphisms. The definition of a LR comorphism is somewhat more complicated than the one for morphisms. This is mainly due to the extra complications with defining compatibility with the bracket.
\begin{definition}\label{definition:lie.rinehart.pair.comorphism}
Suppose \( ( \A, R) \) and \( (\B, S ) \) are Lie-Rinehart pairs. A \textbf{comorphism} \( (F, \psi ) \colon (\A, R) \to (\B, S) \) consists of a comorphism of modules such that:
\begin{enumerate}[(a)]
\item (Compatibility with Anchor) for all \( s \in S \) and \( a \in \A \): 
\[ F(a) = \sum_{i=1}^n r^i \otimes s_i  \quad \Rightarrow \quad  \L_a( \psi(s)) = \sum_{i=1}^n r^i \psi(\L_{b_i}s) \]
Equivalently, the following diagram commutes:
\[
\begin{tikzcd}
\A \arrow[r, "F"] \arrow[d]  &  R \otimes_S \B \arrow[d]\\
\Der(R) \arrow[r] & \Der(S,R)  
\end{tikzcd}
\]
\item (Compatibility with Lie brackets) For all \( a, b \in \A \) such that
\begin{equation}\label{eqn:comorphism.parameterization}  F(a) = \sum_{i=1}^n c^i \otimes e_i \qquad F(b) = \sum_{i=1}^m d^j \otimes f_j 
\end{equation}
we have that:
\[ F([a, b]) = \sum_{i=1}^n \sum_{j=1}^m c^i d^j \otimes [e_i, f_j] + \sum_{j=1}^m \L_a (d^j ) \otimes f_j - \sum_{i=1}^n \L_b(c^i) \otimes e_i \]
\end{enumerate}
The category of LR pairs with LR comorphisms is denoted \( \CLR \).
\end{definition}
\begin{example}\label{example:lie.rinehart.pair.comorphism.algebroid.morphism}
Suppose \( A \to M \) and \( B \to  N \) are Lie algebroids with \( M \) compact. Then any Lie algebroid morphism \( \Phi \colon A \to B \) gives rise to a comorphism \( (\Phi_*, f^\#) \) of the Lie-Rinehart algebras.
\end{example}
\subsection{Factorization of (co)morphisms}
LR morphisms and comorphisms both have nice factorization properties. The factorization operation for LR morphisms was originally observed in \cite{hm1993} and is closely related to actions of Lie algebroids. One can also find further discussion on the topic in Chen and Liu~\cite{ChenLiu}.
\begin{definition}\label{definition:lie.rinehart.pair.action}
Suppose \( (\A, R) \) is an LR pair and \( \phi \colon R \to S \) is a homomorphism of algebras. An \textbf{action} of \( (\A, R) \) along \( \phi \) consists of a \( R \)-linear Lie algebra homomorphism \( \alpha \colon \A \to \Der(S) \).
\end{definition}
\begin{example}\label{example:lie.rinehart.pair.action.poisson.map}
Suppose \( (M, \pi_M) \) and \( (N, \pi_N) \) are a Poisson manifolds and \( f \colon N \to M \) is a Poisson map. By pulling back differential forms we get a \( \Cinf_M(M) \)-module homomorphism \( \Omega^1(M) \to \Omega^1(N) \). If we compose this with \( \pi_N^\sharp \colon \Omega^1(N) \to \X_N(N) \), we obtain an action of \( (\Omega^1(M), \Cinf_M(M)) \) on \( \Cinf_N(N) \) along \( f^\# \). 
\end{example}
\begin{example}\label{example:lie.rinehart.pair.action.lie.rinehart.morphism}
Suppose \( (F, \phi) \colon (\A, R) \to (\B, S) \) is an LR morphism. Then \(  \rho_\B \circ F \) defines an action of \( (\A, R) \) along \( \phi \).
\end{example}
One can generalize the preceding example to actions of Lie algebroids.
\begin{lemma}\label{lemma:lie.rinehart.pair.action.defines.morphism}
Suppose \( (\A , R) \) is an LR pair and \( \phi \colon R \to S \) is a homomorphism of algebras. If \( \alpha \colon \A \to \Der(S) \) is an action along \( \phi \) then there is a unique LR structure on \( (S \otimes_R \A , S) \) which satisfies the following properties:
\begin{itemize}
\item For all \( s_1, s_2 \in S \) and \( a \in \A \): 
\[  \L_{s_1 \otimes a} s_2 = s_1 (\alpha(a)s_2)  \]
\item The morphism of module pairs:
\[  (1 \otimes \Id_\A , \phi ) \colon (\A, R) \to (S \otimes_R \A, S)  \]
is an LR morphism.
\end{itemize}
\end{lemma}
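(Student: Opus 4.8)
The plan is to prove existence and uniqueness simultaneously, since the two stipulated properties will turn out to force both the anchor and the bracket on every element of \( S \otimes_R \A \). First I would write down the anchor: the first property dictates \( \rho(s \otimes a) := s\,\alpha(a) \) as an element of \( \Der(S) \), extended \( S \)-linearly. To see this is well defined on the tensor product over \( R \), I would use that the \( R \)-module structure on \( S \) is through \( \phi \) and that \( \alpha \) is \( R \)-linear, so \( s\phi(r)\,\alpha(a) = s\,\alpha(ra) \); that each \( s\,\alpha(a) \) is again a derivation of \( S \) is immediate.

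Next I would pin down the bracket by expanding \( [s \otimes a, t \otimes b] \) using only the Leibniz identity of Definition~\ref{definition:lie.rinehart.pair}, the \( S \)-linearity of \( \rho \), the anchor formula, and the requirement that \( 1 \otimes \Id_\A \) be a Lie algebra homomorphism (so that \( [1 \otimes a, 1 \otimes b] = 1 \otimes [a,b] \)). Writing \( s \otimes a = s\,(1 \otimes a) \) and applying Leibniz on each argument together with antisymmetry gives
\[ [s \otimes a, t \otimes b] = st \otimes [a,b] + s\,\alpha(a)(t) \otimes b - t\,\alpha(b)(s) \otimes a. \]
Because every step in this derivation is forced, the computation simultaneously establishes uniqueness and tells me exactly which formula to take as the definition of the bracket.

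The main obstacle is showing that this formula descends to a well-defined, \( \R \)-bilinear bracket on \( S \otimes_R \A \), i.e.\ that it respects the relations \( s\phi(r) \otimes a = s \otimes ra \). Substituting \( s \mapsto s\phi(r) \) into the first slot and comparing with the expansion of \( [s \otimes ra, t \otimes b] \), the terms involving \( [a,b] \) and \( \alpha(a)(t) \) match using \( R \)-linearity of \( \alpha \) and the Leibniz rule in \( \A \); the remaining terms proportional to \( \otimes a \) agree precisely when \( \alpha(b)(\phi(r)) = \phi(\L_b r) \). This is exactly the anchor-compatibility relation---condition~(1) of Definition~\ref{definition:lie.rinehart.pair.morphism} applied to \( (1 \otimes \Id_\A, \phi) \)---so well-definedness of the bracket and the assertion that \( (1 \otimes \Id_\A, \phi) \) is an LR morphism are governed by the same identity furnished by the action.

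It then remains to verify the LR axioms for \( (S \otimes_R \A, S) \) with the operations above. Antisymmetry follows directly from the formula. The Leibniz identity \( [u, s v] = s[u,v] + (\L_u s) v \) and the claim that \( \rho \) is a Lie algebra homomorphism \( S \otimes_R \A \to \Der(S) \) reduce, on simple tensors, to the derivation property of each \( \alpha(a) \) on \( S \) together with the identity \( [f D, g E] = fg[D,E] + f\,D(g)\,E - g\,E(f)\,D \) in \( \Der(S) \); the Jacobi identity is a longer but routine expansion using the Jacobi and Leibniz identities in \( (\A, R) \) and the fact that \( \alpha \) preserves brackets. Finally, the two stipulated properties hold by construction: the anchor formula is the definition of \( \rho \), and \( [1 \otimes a, 1 \otimes b] = 1 \otimes [a,b] \) together with the anchor-compatibility identity shows \( (1 \otimes \Id_\A, \phi) \) is an LR morphism. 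I expect the well-definedness of the bracket and the Jacobi identity to be the only genuinely delicate points.
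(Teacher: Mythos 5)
Your proposal is correct, but it necessarily takes a different route from the paper, because the paper offers no in-text argument at all: it disposes of this lemma by citing Proposition 5.11 of Higgins--Mackenzie \cite{hm1993}. What you have written is in effect a self-contained reconstruction of that citation, and its architecture is sound: the first bullet pins the anchor to \( \rho(s \otimes a) = s\,\alpha(a) \) (balanced over \( \otimes_R \) by \( R \)-linearity of \( \alpha \)); Leibniz plus antisymmetry plus \( [1\otimes a, 1\otimes b] = 1 \otimes [a,b] \) then force
\[ [s \otimes a, t \otimes b] = st \otimes [a,b] + s\,\alpha(a)(t) \otimes b - t\,\alpha(b)(s) \otimes a, \]
so uniqueness and the candidate formula emerge from one computation. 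Your key observation is also the right one: balancedness of this formula in the first slot reduces exactly to \( \alpha(b)(\phi(r)) = \phi(\L_b r) \), the same identity that makes \( (1 \otimes \Id_\A, \phi) \) anchor-compatible; the second slot (where \( \alpha(a)(t\phi(r)) = \alpha(a)(t)\phi(r) + t\,\alpha(a)(\phi(r)) \) enters) is handled symmetrically or by antisymmetry. The remaining verifications you list --- Leibniz from the derivation property of \( \alpha(a) \), the anchor being a Lie homomorphism from \( [fD, gE] = fg[D,E] + f\,D(g)\,E - g\,E(f)\,D \) together with bracket-preservation of \( \alpha \), and Jacobi by expansion --- all go through. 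What your version buys is transparency about exactly which hypothesis carries which axiom; what the paper's citation buys is brevity and delegation of the tedium.

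One caution, since it is the only load-bearing point. The identity \( \alpha(a)(\phi(r)) = \phi(\L_a r) \), which you describe as ``furnished by the action,'' is not literally among the conditions of Definition~\ref{definition:lie.rinehart.pair.action}, which asks only that \( \alpha \) be \( R \)-linear and bracket-preserving. It \emph{is} part of Higgins--Mackenzie's definition of an action (the one in force in their Proposition 5.11), and it holds in the paper's examples (in Example~\ref{example:lie.rinehart.pair.action.lie.rinehart.morphism} it is precisely anchor-compatibility of the LR morphism), but with the paper's literal definition it can fail: take \( S = R \), \( \phi = \Id_R \), \( \alpha = 0 \), and \( \A \) with nonzero anchor; then no LR structure on \( R \otimes_R \A \cong \A \) satisfies both bullets, so even the existence claim of the lemma breaks down. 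Your proof is therefore correct provided this compatibility is read into (or added to) the definition of an action --- you should state it as an explicit hypothesis rather than treat it as automatic.
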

For a proof of this lemma, we refer the reader to Proposition 5.11 in \cite{hm1993}.
\begin{theorem}[Higgins-Mackenzie]\label{theorem:lie.rinehart.pair.morphism.factors}
Suppose \( (F,\phi) \colon (\A, R) \to (\B, S) \) is an LR morphism. Let us equip \( (S \otimes_R \A , S) \) with the LR structure induced by the action \( \rho_\B \circ F \). Then there exists a unique LR morphism 
\[ (\overline{F}, \Id_R ) \colon (S \otimes_R \A , S) \to (\B, S)  \]
such that 
\[ (F, \phi) = (\overline{F}, \Id_R) \circ (1 \otimes \Id_A , \phi) \]
\end{theorem}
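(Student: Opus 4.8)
The plan is to show first that the factorization condition forces a unique candidate for $\overline{F}$, and then to verify that this forced candidate is a genuine LR morphism. For uniqueness, note that the composite $(\overline{F}, \Id_S) \circ (1 \otimes \Id_\A, \phi)$ has module component $a \mapsto \overline{F}(1 \otimes a)$, so the requirement $F = \overline{F} \circ (1 \otimes \Id_\A)$ reads $\overline{F}(1 \otimes a) = F(a)$. Since $\overline{F}$ must be $S$-linear and the elements $1 \otimes a$ generate $S \otimes_R \A$ as an $S$-module, this determines $\overline{F}$ completely: necessarily $\overline{F}(s \otimes a) = s\,F(a)$. The algebra component is forced to be $\Id_S$ (the identity of the common base $S$) so that the composite recovers $\phi = \Id_S \circ \phi$.

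For existence I would first check that $\overline{F}(s \otimes a) = s\,F(a)$ is a well-defined $S$-linear map out of the tensor product. This is exactly the universal property of extension of scalars: the assignment $(s,a) \mapsto s\,F(a)$ is additive in each slot and $R$-balanced because $F$ is $R$-linear, i.e. $F(ra) = \phi(r)F(a)$, so that $\overline{F}(s\phi(r) \otimes a) = s\phi(r)F(a) = \overline{F}(s \otimes ra)$. Hence $\overline{F}$ descends and is visibly $S$-linear, which settles the module-pair axiom together with the algebra map $\Id_S$.

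Next I would verify the two conditions of Definition~\ref{definition:lie.rinehart.pair.morphism} for $(\overline{F}, \Id_S)$. For anchor compatibility I would use that, by Lemma~\ref{lemma:lie.rinehart.pair.action.defines.morphism}, the anchor on $S \otimes_R \A$ satisfies $\L_{s \otimes a}\, t = s\,(\alpha(a)\,t)$ with $\alpha = \rho_\B \circ F$; since $\overline{F}(s \otimes a) = s\,F(a)$ and the anchor $\rho_\B$ of $(\B, S)$ is $S$-linear, one gets $\L_{\overline{F}(s \otimes a)}\, t = s\,\L_{F(a)} t = s\,(\alpha(a)\, t) = \L_{s \otimes a}\, t$, which is precisely $\Id_S(\L_x t) = \L_{\overline{F}(x)}\Id_S(t)$ on generators, and then on all of $S \otimes_R \A$ by additivity.

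The main work, and the step I expect to be the genuine obstacle, is the bracket compatibility: that $\overline{F}$ is a Lie algebra homomorphism. Here I would first record the bracket on the action LR pair, which by Lemma~\ref{lemma:lie.rinehart.pair.action.defines.morphism} (in particular that $(1 \otimes \Id_\A, \phi)$ is an LR morphism) together with the Leibniz rule is forced on generators to be
\[ [s_1 \otimes a_1, s_2 \otimes a_2] = s_1 s_2 \otimes [a_1, a_2] + s_1(\alpha(a_1)\, s_2) \otimes a_2 - s_2(\alpha(a_2)\, s_1) \otimes a_1. \]
Applying $\overline{F}$ to the right-hand side and, separately, expanding $[s_1 F(a_1), s_2 F(a_2)]$ by the Leibniz rule in $(\B, S)$, I would compare the two resulting three-term expressions. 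They agree term by term: the leading terms match because $F([a_1,a_2]) = [F(a_1), F(a_2)]$ since $F$ is a Lie homomorphism, while the two correction terms match because $\alpha(a_i) = \rho_\B(F(a_i)) = \L_{F(a_i)}$ by the definition of the action. Since the bracket and $\overline{F}$ are additive in each argument, verifying the identity on these generators with arbitrary coefficients $s_1, s_2 \in S$ establishes it on all of $S \otimes_R \A$. The only real care required is bookkeeping of which Lie derivative acts on which scalar; no input beyond the hypotheses on $(F, \phi)$ and the defining formulas of the action LR pair is needed, which is why I expect this step to be laborious rather than conceptually difficult.
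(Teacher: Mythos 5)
Your proposal is correct and takes essentially the same route as the paper: both define $\overline{F}(s \otimes a) := s\,F(a)$ on basic tensors and obtain uniqueness from $S$-linearity together with the requirement $\overline{F}(1 \otimes a) = F(a)$. The paper's proof stops there, declaring the rest clear, while you supply the verifications it omits --- well-definedness via the $R$-balanced property $F(ra) = \phi(r)F(a)$, anchor compatibility from $\alpha = \rho_\B \circ F$, and the term-by-term bracket check against the action bracket $[s_1 \otimes a_1, s_2 \otimes a_2] = s_1 s_2 \otimes [a_1,a_2] + s_1(\alpha(a_1)s_2) \otimes a_2 - s_2(\alpha(a_2)s_1) \otimes a_1$ --- all of which are carried out correctly (and you rightly write $\Id_S$ where the paper's statement has the typo $\Id_R$).
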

\begin{proof}
Let us define \( \overline{F} \colon S \otimes_R \A \to B \) on basic tensors by the equation:
\[ \overline{F}(s \otimes a) := s F(a) \]
From this definition, it is clear that this is the unique such \( S \)-module homomorphism which satisfies \( \overline{F}(1 \otimes \Id_\A) = F \).
\end{proof}
In words, this theorem says that any LR morphism can be factored into a morphism which is the identity map at the level of rings and an LR morphism coming from an action along a ring homomorphism.

There is also a factorization theorem for LR comorphisms. In order to state this result, we need the notion of pushing an LR pair along a ring homomorphism.
\begin{definition}\label{definition:lie.rinehart.pair.base.change}
Suppose \( (\B, S) \) is an LR pair and \( \psi \colon S \to R \) is a ring homomorphism. The \textbf{base change} \(( \psi^!\B, R) \) of \( \B \) along \( \psi \) is defined as follows.

As a module, it is the fiber product of \( \Der(S) \) with \( R \otimes_S \B \) over \( \Der(S,R) \). In other words, it makes the following diagram a pullback square of \( R \)-modules:
\begin{equation}\label{eqn:base.change.diagram}
\begin{tikzcd}
\psi^! \B \arrow[d] \arrow[r] & R \otimes_S \B \arrow[d] \\
\Der(R) \arrow[r] & \Der(S,R) 
\end{tikzcd}
\end{equation}
By construction, there is a canonical \( R \)-module map \( \psi_* \B \to \Der(R) \) which we take to be the anchor map. Finally, given two elements of \(R \otimes_S \B \):
\[ \left(D , \sum_{i}^n c^i \otimes e_i \right) \quad \text{ and } \quad \left(D' , \sum_{j}^m d^i \otimes f_j \right)  \]
we define the Lie bracket to be:
\[ \left( D D' - D' D , \  \sum_{i=1}^n \sum_{j=1}^m c^i d^j \otimes [e_i, f_j] + \sum_{j=1}^m D(d^j ) \otimes f_j - \sum_{i=1}^n D'(c^i) \otimes e_i \right) \]
\end{definition}
Checking that this results in a well-defined LR pair is straightforward but rather tedious so we will leave that task to the skeptical reader. An important feature of this construction is that there is a canonical projection from the base change to the original LR pair:
\[ (\pi , \psi) \colon (\psi^! \B , R) \to (\B, S)  \]
\begin{example}\label{example:lie.rinehart.pair.base.change.algebroid}
Suppose \( A \to X \) is a Lie algebroid and \( f \colon P \to X \) is a submersion with \( P \) compact. Then, as a vector bundle, the pullback algebroid \( f^! A \) is defined to be the fiber product:
\[  f^! A :=  T M \times_{\dif f, \rho_A} A \]
It is a standard check to show that this canonically results in a Lie algebroid. 
Furthermore, there is a natural isomorphism:
\[  (\Gamma_{f^! A} (P), \Cinf_P(P)) \cong ((f^\#)^! \Gamma_\A(M), \Cinf_P(P) )  \]
\end{example}
So, we see that base change operation is a little tricky to define, but at the geometric level, it corresponds to a very common Lie algebroid operation. We can use the base change to state our factorization theorem for LR comorphisms.
\begin{theorem}\label{theorem:lie.rinehart.pair.comorphism.factors}
Suppose \( (F, \psi) \colon (\A, R) \to (\B, S) \) is an LR comorphism. Then there exists a unique LR comorphism \( (\underline{F}, \Id_R) \colon (\A, R) \to (\psi^! \B, R) \) such that:
\[ (F, \psi) =  (\pi_2, \psi) \circ (\underline{F}, \Id_R) \]
\end{theorem}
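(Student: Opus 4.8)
The plan is to build $\underline{F}$ directly from the universal property of the pullback square \eqref{eqn:base.change.diagram} that defines $\psi^!\B$, and then to verify the two comorphism axioms. Recall that, as an $R$-module, $\psi^!\B$ is the fiber product of $\Der(R)$ with $R\otimes_S\B$ over $\Der(S,R)$. To produce an $R$-module map $\underline{F}\colon \A\to \psi^!\B$ it therefore suffices to exhibit a cone over this square with apex $\A$: namely the anchor $\rho_\A\colon \A\to \Der(R)$ together with the comorphism data $F\colon \A\to R\otimes_S\B$. The compatibility needed to invoke the universal property---that these two maps agree after projecting into $\Der(S,R)$---is exactly the anchor-compatibility axiom (a) of the comorphism $(F,\psi)$ from Definition~\ref{definition:lie.rinehart.pair.comorphism}, which asserts precisely that the relevant outer square commutes. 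Hence $\underline{F}:=(\rho_\A,F)$ is a well-defined $R$-module map.

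Next I would establish the factorization together with uniqueness. Unwinding the composition law for comorphisms, $(\pi_2,\psi)\circ(\underline{F},\Id_R)$ has ring component $\Id_R\circ\psi=\psi$ and module component $\pi_2\circ\underline{F}$; since $\pi_2$ is the projection onto the $R\otimes_S\B$ factor of the pullback, this equals $F$, giving the desired identity $(F,\psi)=(\pi_2,\psi)\circ(\underline{F},\Id_R)$. For uniqueness, any $\underline{F}$ realizing this factorization must satisfy $\pi_2\circ\underline{F}=F$, so its $R\otimes_S\B$ component is forced; and the anchor axiom for the comorphism $(\underline{F},\Id_R)$ forces the $\Der(R)$ component of $\underline{F}(a)$ to equal $\rho_\A(a)$, because the anchor of $\psi^!\B$ is the first projection. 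Thus $\underline{F}$ is uniquely determined.

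It then remains to check that $(\underline{F},\Id_R)$ is genuinely an LR comorphism. Anchor compatibility (a) is immediate: since $\underline{F}(a)=(\rho_\A(a),F(a))$ and the anchor of $\psi^!\B$ is the first projection, we get $\L_{\underline{F}(a)}=\rho_\A(a)=\L_a$, which is exactly what (a) demands for the identity ring map. The substantive step is bracket compatibility (b). Because the ring map is $\Id_R$, every element of $R\otimes_R\psi^!\B$ can be taken as a single tensor $1\otimes\beta$, so axiom (b) collapses to the assertion that $\underline{F}\colon \A\to\psi^!\B$ is a homomorphism of Lie algebras, i.e.\ $\underline{F}([a,b])=[\underline{F}(a),\underline{F}(b)]$.

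To verify this last identity I would expand both sides using the bracket on $\psi^!\B$ from Definition~\ref{definition:lie.rinehart.pair.base.change}. Writing $F(a)=\sum_i c^i\otimes e_i$ and $F(b)=\sum_j d^j\otimes f_j$, we have $\underline{F}(a)=(\L_a,\sum_i c^i\otimes e_i)$ and likewise for $b$, so the base-change bracket produces $[\underline{F}(a),\underline{F}(b)]$ with first component $\L_a\L_b-\L_b\L_a$ and second component $\sum_{i,j}c^id^j\otimes[e_i,f_j]+\sum_j\L_a(d^j)\otimes f_j-\sum_i\L_b(c^i)\otimes e_i$. On the other hand $\underline{F}([a,b])=(\L_{[a,b]},F([a,b]))$: the first components agree because $\rho_\A$ is a Lie homomorphism, and the second components agree because the displayed expression is precisely the right-hand side of the bracket-compatibility axiom (b) for the given comorphism $(F,\psi)$. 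This coincidence is the whole point of the base-change construction: the derivation-valued slots $D(d^j)$ and $D'(c^i)$ in the base-change bracket are fed exactly the anchors $\L_a$ and $\L_b$, reproducing the correction terms of (b). I expect this matching of the two bracket formulas to be the main obstacle, though it is essentially bookkeeping once the construction is set up; the remaining verifications are formal.
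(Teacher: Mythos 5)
Your proposal is correct and follows essentially the same route as the paper: you define $\underline{F}(a) := (\L_a, F(a))$, invoke the anchor-compatibility axiom (equivalently, the commutativity of the pullback square defining $\psi^!\B$) for well-definedness, and check bracket compatibility by direct substitution into the base-change bracket. The paper's proof is a terse version of exactly this argument, so your write-up simply supplies the details (the collapse of axiom (b) to a Lie-homomorphism condition when the ring map is $\Id_R$, and the explicit uniqueness argument) that the paper leaves to the reader.
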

\begin{proof}
For \( a \in \A \), let:
\[ \underline{F} (a) := (\L_a , F(a) ) \]
We claim that \( (\underline{F} ,\Id_R) \colon (\A, R) \to (\psi^! \B, R) \) is an LR comorphism. The compatibility with the anchor map follows from the commutativity of (\ref{eqn:base.change.diagram}). Compatibility with the bracket follows immediately from substituting the relevant terms.
\end{proof}
\section{Ringed Spaces and sheaves of modules}\label{section:ringed.spaces}
We will now leave the world of pure algebra and introduce more geometry into the discussion. The most general geometric concept that we will use in this paper is that of a ringed space.
\subsection{Standard sheaf constructions}
Although we assume the reader is familiar with the notion of a sheaf of rings, we will spend some time establishing our notational conventions for some standard sheaf constructions to avoid confusion.
\begin{definition}[Image]\label{definition:image.sheaf}
Suppose \( \E \) is a sheaf on a topological space \( X \) and let \( f \colon X \to Y \) be a continuous map. The \textbf{image} \(f_* \E \) of \( \E \) along \( f \) is defined to be the following sheaf on \( Y \):
\[ U \subset Y \, \text{ open} \qquad U  \mapsto  \E(f^{-1}(U) ) \]
\end{definition}
The image sheaf is probably the easiest way to move a sheaf from one topological space to another. It is slightly less easy, but often necessary, to move a sheaf in the reverse direction.
\begin{definition}[Inverse image]\label{definition:inverse.image.sheaf}
Suppose \( \F \) is a sheaf on a topological space \( Y \) and let \( f \colon X \to Y \) be a continuous map. The \textbf{inverse image} \(f\inv \E \) of \( \E \) along \( f \) is defined to be the sheafification of the following pre-sheaf on \( Y \):
\begin{equation*} 
U \subset Y \, \text{open } \qquad  U \mapsto \left( \mathop{\longrightarrow}^{\lim}_{ V \supset f(U)} \F(V) \right)  
\end{equation*}
\end{definition}
The image and inverse image functors on sheaves are related by a well-known adjunction. This adjunction comes from a natural identification:
\begin{equation}\label{eqn:image.adjunction} \Hom_X (f\inv \F, \E) \cong \Hom_Y( \F, f_* \E ) 
\end{equation}
\subsection{Ringed spaces}
A ringed space is a topological space which is equipped with an abstract ring of functions that plays the role of the ring of continuous or smooth functions in other contexts.
\begin{definition}\label{definition:ringed.space}
A \textbf{locally ringed space over \( \R \)} is pair \( (X , \O_X) \) where 
\begin{itemize}
\item \( X \) is a topological space
\item \( \O_X  \) is a sheaf of unital \(\R \)-algebras 
\item For all \( x \in X \) the stalk \( \O_{X, x} \) has a unique maximal ideal (local ring)
\end{itemize}

A \textbf{morphism} of ringed spaces \( (f, f^\#) \colon (X, \O_X) \to (Y, \O_Y) \) consists of a continuous map \( f \colon X \to Y \) and a morphism of sheaves of rings over \(X \)\footnote{ Some authors take \( f^\# \) to be a morphism of sheaves of rings \( \O_Y \to f_* \O_X \). This definition is equivalent as a consequence of Equation~\ref{eqn:image.adjunction}}:
\[ f^\# \colon f\inv \O_X \to \O_Y \]
Finally, for all \( x \in X \) we require that the image of the maximal ideal under the associated map at the level of stalks \( f^\# \colon \O_{Y, f(x)} \to \O_{X, x} \) is contained in the maximal ideal.
\end{definition}
We will usually write \( X \) to denote a ringed space and leave \( \O_X \) implicit. Similarly, we may write \( f \colon X \to Y \) to denote a morphism of ringed spaces and leave \( f^\# \) implicit.

For completeness, let us state the standard examples of locally ringed spaces.
\begin{example}
Suppose \( X \) is a topological space and let \( \C^0_X \) denote the sheaf of continuous functions valued in \( \R \). Then \( (X, \C^0_X ) \) is a locally ringed space.
\end{example}
\begin{example}
Suppose \( M \) is a manifold and let \( \Cinf_M \) denote the sheaf of smooth functions valued in \( \R \), then \( (M, \Cinf_M) \) is a locally ringed space.
\end{example}
\begin{example}
Suppose \( R \) is an \( \R \)-algebra. Let \( \Spec(R) \) denote the set of all prime ideals of \( R \). Let us define the topology on \( \Spec(R) \) by exhibiting its basis. Given \( f \in \Spec(R) \) we define \( U_f \) to be the set of prime ideals which do not contain \( f \). 

Let us define a sheaf of \( \R \)-algebras on \( \Spec(R) \) by using this basis. We define:
\[ \O_{\Spec(R)}(U_f) := R/ \langle f \rangle \]
Where \( \langle f \rangle \) denotes the principal ideal generated by \( f \). This makes \( \Spec(R) \) into a locally ringed space over \( \R \).
\end{example}
\subsection{Sheaves of modules}
\begin{definition}
Suppose \( X \) is a locally ringed space over \( \R \). A sheaf of \( \O_X \)-modules on \( X \) is a sheaf \( \E \) on \( X \) such that the pair \( (\E,\O_X) \) is a sheaf valued in \( \Mod \). In other words, for each open \( U \subset X \), we have that \(( \E(U), \O_X(U)) \) is a module pair and the restriction maps are module pair morphisms.
\end{definition}
We take the point of view that a sheaf of modules is a generalization of the notion of a vector bundle. Indeed, some authors define a vector bundle on a locally ringed space to be a locally free sheaf of modules.
\begin{example}
Suppose \( E \to M \) is a vector bundle over a smooth manifold. Then \( \Gamma_E \) is a sheaf of \( \Cinf_M \)-modules on \( M \).
\end{example}
\begin{example}
Suppose \( X \) is a locally ringed space and \( n \ge 1 \) is an integer. The (standard) free sheaf of rank \( n \) over \( X \) refers the direct sum sheaf:
\[ (\Cinf_X)^{\oplus n}(U) := \Cinf_X(U)^{\oplus n} \]
A sheaf is (locally) free if it is (locally) isomorphic to a standard free sheaf.
\end{example}

Since we will be considering base changing morphisms of modules, we need the notion of pulling back a sheaf of modules along a morphism of ringed spaces.
\begin{definition}\label{definition:pullback.module}
Suppose \( f \colon X \to Y \) is a morphism of ringed spaces and \( \E\) is a sheaf of modules over \( X \). The \textbf{pullback} \( f^* \E\) of \( \E \) along \( f \) is the following sheaf of \( \O_X \)-modules:
\[ f^* \E := \O_X \otimes_{f\inv \O_Y} f\inv \E  \]
\end{definition}
\begin{example}\label{example:pullback.module.vector.bundle}
Suppose \( Y \) is a smooth manifold and \(\pi \colon  V \to Y \) is a vector bundle. Let \( \F \) denote the sheaf of sections of \( V \). Given a smooth map \( f \colon X \to Y \), \( f^* \E \) is the sheaf of sections of the pullback bundle \( f^* V := E \times_{\pi, f} X \). 
\end{example}
The main use of the pullback operation is to define morphisms of sheaves of modules over different bases.
\begin{definition}\label{definition:sheaf.module.morphism}
Suppose \( \E \) and \( \W \) are sheaves of modules over \( X \) and \( Y \), respectively. A \textbf{morphism} of sheaves of modules consists of a pair:
\[ f\colon X \to Y \qquad F \colon  \E \to  f^* \W  \]
where \( f \) is a morphism of ringed spaces and \( F \) is a morphism of sheaves of \(  \O_X \)-modules. Such morphisms will be denoted \( (F, f) \colon \E \to \W \). 
\end{definition}
\begin{definition}\label{definition:sheaf.module.comorphism}
Suppose \( \E \) and \( \W \) are sheaves of modules over \( X \) and \( Y \), respectively.. A \textbf{comorphism} of sheaves of modules consists of a pair:
\[ f \colon X \to Y \qquad F \colon  \W \to f_* \E \]
where \( f \) is a morphism of ringed spaces and \( F \) is a morphism of sheaves of \( \O_Y \)-modules.

The category whose morphisms are morphisms of sheaves of modules over locally ringed spaces is denoted \( \Mod_{LRS} \).
\end{definition}
Notice that during this generalization to sheaves, we have swapped the meanings of comorphisms and morphisms. In other words, morphisms of sheaves of modules correspond to comorphisms of module pairs and comorphisms of sheaves of modules correspond to morphisms of module pairs. This is to ensure that our definition of morphism of sheaves of modules agrees with the usual notion for vector bundles. 
\section{Fiber determined modules}\label{section:fiber.determined.modules}
In general, an arbitrary module is a fairly wild object.
In this section we will discuss a special class of modules over locally ringed spaces that we call \emph{fiber determined}. Fiber determined modules have particularly nice topological properties while still being fairly general. Fiber determined modules do not seem to be often referenced in the literature. However, one treatment does appear in \textit{Smooth Manifolds and Observables} by Jet Nestruev~\cite{Nestruev2003} under the name `geometrical modules'.
\subsection{Definition}
To start, we will begin with the definition of a fiber of a module over a point. From a certain point of view, the fiber of a module is the `zero-th' order part of the stalk.
%%%%%%%%%%%%%%%%%%%%%%%%%%%%%%
%%%%%%% Fibers and fiberspace
%%%%%%%%%%%%%%%%%%%%%%%%%%%%%%
\begin{definition}\label{definition:fiberspace}
Suppose \( \E \) is a sheaf of modules over \( X \). Given a point \( x \in X \), let \( \O_{X, x} \) and \( \E_x \) denote the stalks at \( x \) and \( \m_x \subset \O_{X, x} \) the maximal ideal. The \textbf{fiber} of \( \E \) at \( x \) is defined to be:
\[ \Fib(\E)_x := \frac{\E_x}{\m_x \E_x}  \]
We write \( \Fib(\E) \) to denote the disjoint union of all fibers.

Given a section, \( \sigma \in \E(U) \), we write \( \Fib(\sigma)_x \in \Fib(\E)_x \) to denote the class of \( \sigma \) in the fiber at \( x \in M \).
\end{definition}
\begin{example}
Suppose \( \pi \colon E \to M \) is a vector bundle over a smooth manifold. Let \( \Gamma(E) \) be the associated sheaf of modules. The stalk \( \Gamma(E)_x \) is the module of germs of sections over \( x \in M \). The set \( \m_x \Gamma(E)_x \) is the module of germs of sections which \emph{vanish} at \( x \). Therefore, the quotient \( \Gamma(E)_x / m_x \Gamma(E)_x \) is canonically isomorphic to the fiber \( \pi^{-1}(x) \).
\end{example}
Essentially, a fiber determined module is a module \( \E \) such that \( \Fib(\E) \) completely characterizes \( \E \).
%%%%%%%%%%%%%%%%%%%%%%%%%%%%%%
%%%%%%% Fiber Determined
%%%%%%%%%%%%%%%%%%%%%%%%%%%%%%
\begin{definition}\label{definition:fiber.determined}
Suppose \( \E  \) is a sheaf of modules over \( X \). We say that \( \E \) is \textbf{fiber determined (FD)} if for all \( U \subset X \) open and \( e \in \E(U) \) we have that \( e = 0 \) if and only if for all \( x \in U ,\   \Fib(e)_x = 0 \). 
\end{definition}
Not every module is fiber determined. We will see a few examples in a moment. However, every module has a fiber determined module underlying it. We call this process \emph{fiber determinization}.
%%%%%%%%%%%%%%%%%%%%%%%%%%%%%%
%%%%%%% Fiber Determinization lemma
%%%%%%%%%%%%%%%%%%%%%%%%%%%%%%
\begin{lemma}\label{label:fiber.determinization}
Suppose \( \E  \) is a sheaf of modules over \( X \), then there exists a unique, up to isomorphism, module \( \E^{FD} \) together with a surjective module homomorphism \( \pi \colon \E \to \E^{FD}\) with the following property: For all fiber determined \( \W \) and \( \O_x \)-module morphisms \( F \colon \E \to \W \), there exists a unique morphism \( F' \colon \E^{FD} \to \W \) such that \( F = F' \circ \pi \).
\[
\begin{tikzcd}[column sep=large]
\E \arrow[dr, "F"] \arrow[d, "\pi"] & \\
\E^{FD} \arrow[r, dashed, "\exists! F'"] & \W
\end{tikzcd}
\]
\end{lemma}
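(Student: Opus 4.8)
The plan is to realize $\E^{FD}$ as the quotient of $\E$ by the subsheaf of fiberwise-trivial sections, and then to read off the universal property from the standard behavior of presheaf quotients and sheafification. Concretely, for each open $U \subset X$ I would set
\[ \N(U) := \{ e \in \E(U) : \Fib(e)_x = 0 \text{ for all } x \in U \}. \]
Since the vanishing of $\Fib(e)_x$ depends only on the stalk $e_x$, the condition defining $\N$ is local, so $\N$ is a sub-$\O_X$-module of $\E$. I then define $\E^{FD}$ to be the sheafification of the presheaf quotient $U \mapsto \E(U)/\N(U)$, and let $\pi \colon \E \to \E^{FD}$ be the quotient map followed by the canonical map into the sheafification. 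Because taking stalks commutes with both quotients and sheafification, $\pi$ is surjective on stalks and hence an epimorphism of sheaves, which gives the required surjectivity.

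The crucial observation is that $\pi$ leaves fibers unchanged. A germ of $\N_x$ is represented by some $e$ with $\Fib(e)_x = 0$, i.e. $e_x \in \m_x \E_x$, so $\N_x \subseteq \m_x \E_x$. Consequently $\m_x (\E^{FD})_x = (\m_x \E_x)/\N_x$, and therefore
\[ \Fib(\E^{FD})_x = \frac{\E_x/\N_x}{(\m_x \E_x)/\N_x} \isom \frac{\E_x}{\m_x \E_x} = \Fib(\E)_x, \]
with the isomorphism induced by $\pi$. Using this I would verify that $\E^{FD}$ is fiber determined: if $\bar e \in \E^{FD}(U)$ satisfies $\Fib(\bar e)_x = 0$ for all $x \in U$, then near each point I lift $\bar e$ through the stalkwise-surjective $\pi$ to a local section $e$ of $\E$; the fiber isomorphism forces $\Fib(e)_y = 0$ on a neighborhood, so $e$ lies in $\N$ and thus $\bar e$ vanishes on a neighborhood of every point. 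The separation axiom for the sheaf $\E^{FD}$ then gives $\bar e = 0$.

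For the universal property, suppose $\W$ is fiber determined and $F \colon \E \to \W$ is an $\O_X$-module morphism. Since module morphisms induce maps on fibers, any $e \in \N(U)$ satisfies $\Fib(F(e))_x = 0$ for all $x$, whence $F(e) = 0$ because $\W$ is fiber determined; thus $F$ annihilates $\N$ and descends to a morphism of presheaves $\E/\N \to \W$. As $\W$ is a sheaf, the universal property of sheafification produces a unique $F' \colon \E^{FD} \to \W$ with $F = F' \circ \pi$, and uniqueness of $F'$ is immediate from $\pi$ being epic. Uniqueness of $(\E^{FD}, \pi)$ up to isomorphism is then the usual formal consequence of a universal property.

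The step I expect to be the main obstacle is the fiber computation $\N_x \subseteq \m_x \E_x$ combined with the careful bookkeeping around sheafification: one must check that passing to the associated sheaf neither perturbs the fibers (so that $\Fib(\pi)_x$ stays an isomorphism) nor creates new fiberwise-trivial sections that fail to vanish. Getting this interaction right is precisely what guarantees that $\E^{FD}$ is genuinely fiber determined rather than merely a quotient with the correct fibers.
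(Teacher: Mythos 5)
Your proposal is correct and follows essentially the same route as the paper: both quotient $\E$ by the subsheaf $K = \N$ of sections whose fibers all vanish and then read off the universal property from $F$ annihilating this subsheaf. The only difference is that you carefully carry out the sheafification of the presheaf quotient and verify via $\N_x \subseteq \m_x \E_x$ that fibers are preserved and the quotient is genuinely fiber determined, details the paper compresses into ``$\E^{FD} := \E/K$ is clearly fiber determined.''
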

\begin{proof}
The uniqueness follows from the fact that \( \E^{FD} \) satisfies a universal property. We only need to show existence which we will do by construction.

Let \( K \subset \E \) be the following submodule. For \( U \subset M \) open let:
\[ K(U) := \{ e \in \E(U) \ : \ \forall x \in U , \ \Fib(e)_x = 0 \} \]
Then \( \E^{FD}:= \E/K \) is clearly fiber determined. We only need to show that \( \E^{FD} \) satisfies the universal property.

Suppose \( F \colon \E \to \W \) is an \( \O_X \)-module homomorphism and \( \W \) is fiber determined. If \( e \in K(U) \) then it follows that \( F(e) \in \m_x \W \) for all \( x \in U \). Since \( \W \) is fiber determined it follows that \( F(e) = 0 \). Since \( K \subset \ker(F) \), there exists a unique \( \O_X \)-module homomorphism \( F' \colon \E^{FD} \to \W \) satisfying the desired property.
\end{proof}
\begin{remark}
Given a smooth function \( f \colon M \to N \) and a fiber determined sheaf of modules \( \E \) over \( N \). It is not completely clear whether the pullback module \( f^* \E \) is fiber  \( f^*\W \) is fiber determined. If \( f^* \W \) is not fiber determined, in general, there is a case to be made for defining a \emph{fiber determined} morphism of modules \( F \colon \W \to \E \) to be a module homomorphism:
\[ F \colon \W \to f^* \E^{FD} \]
Unless one can guarantee the existence of a splitting \( f^* \E^{FD} \to f^* \E \), using this notion of morphism may represent a non-trivial modification. Every ordinary morphism would certainly induce a fiber determined module. However, it may not be the case that every fiber determined morphism of modules arises in this manner.
\end{remark}
\subsection{Examples}
Now, let us see some examples and counter examples for the fiber determined condition.
%%%%%%%%%%%%%%%%%%%%%%%%%%%%%%
%%%%%%% Example: Submodules of fiber determined modules are fiber determined.
%%%%%%%%%%%%%%%%%%%%%%%%%%%%%%
\begin{example}\label{example:fd.submodule}
Suppose \( \E \) is fiber determined and \( \W \subset \E \) is a submodule. Then \( \W \) is also fiber determined. To see this, notice that the inclusion \( \W \to \E \) factors through \( \W^{FD} \). This implies that \( \W \to \W^{FD} \) is injective and therefore an isomorphism.
\end{example}
%%%%%%%%%%%%%%%%%%%%%%%%%%%%%%
%%%%%%% Example: Sheaf of sections is fiber determined
%%%%%%%%%%%%%%%%%%%%%%%%%%%%%%
\begin{example}\label{example:fd.sheaf.of.sections}
Suppose \( \pi \colon E \to M \) is a vector bundle over a smooth manifold. Then the sheaf of sections \( \Gamma_E \) is fiber determined. This can be seen quite easily using the fact that \( \Gamma_E \) is a locally free module.

If we combine this with Example~\ref{example:fd.submodule} we see that any submodule of a vector bundle is also fiber determined.
\end{example}
%%%%%%%%%%%%%%%%%%%%%%%%%%%%%%
%%%%%%% Example: Skyscraper sheaf is not fiber determined
%%%%%%%%%%%%%%%%%%%%%%%%%%%%%%
\begin{example}\label{example:fd.skyscraper}
Let \( \E  \) be a sheaf of modules over \( \R \) defined as follows. \( \E(U) = 0 \) if \( 0 \notin U \) and otherwise:
\[ \E(U) := \left\{ \text{germs of functions } f \in \Cinf_\R(U) \text{ around } 0 : \forall n \ge 0 ,\  \frac{\difn{n} f}{{\difer} x^n}(0) = 0 \right\}  \]
This is a well-defined sheaf of \( \Cinf_\R \)-modules and it is not fiber determined. A smooth function \( f \) with the property that \( frac{d^n f}{dx^n}(0) = 0 \) for all \( n \ge 0 \) must also be an element of \( \m_0 \E \). Therefore, \( \Fib(\E)_x = 0 \) for all \( x \in \R \).
\end{example}
%%%%%%%%%%%%%%%%%%%%%%%%%%%%%%
%%%%%%% Example: Quotient by non-closed submodule is not fiber determined
%%%%%%%%%%%%%%%%%%%%%%%%%%%%%%
\begin{example}\label{example:fd.bad.quotient}
Let \( \rho \colon \R \to \R \) be a smooth function such that \( \rho(t) = 0 \) for \( t \le 0 \) and \( \rho(t) > 0 \) for \( t > 0 \). 
Let \( \E \subset \Cinf_{\R} \) be the submodule generated by \( \rho \). 
Since \( \E \) is a submodule of a vector bundle, it is fiber determined. 
However, the quotient module \( \Cinf_\R / \E \) is not fiber determined. This is due to a contradiction with Lemma~\ref{lemma:fd.topological.lrc} which will appear in the next subsection.
\end{example}
%%%%%%%%%%%%%%%%%%%%%%%%%%%%%%
%%%%%%% Example: Sheaf of Kahler forms is not fiber determined
%%%%%%%%%%%%%%%%%%%%%%%%%%%%%%
\begin{example}
This example was suggested to the author by Luca Vitagliano. Suppose \( \R \) is an analytic manifold and \( \O_\R \) is the sheaf of real analytic functions on the line. Consider the sheaf \( \Omega_K \) of K\"ahler forms:
\[  \Omega_K (U) := \langle \dif f  , \ f \in \O_\R(U) \ |\  \forall f,g \in \O_\R(U) , \ \difer (fg) = g \dif f + f \dif g , \ \difer(f+g) = \dif f + \dif g \rangle \] 
It turns out that the sheaf of modules \( \Omega_K \) is not fiber determined. To see this, consider the element: 
\[  \alpha = \dif e^x - e^x \!\dif x  \in \Omega_K(\R) \]  
Given a point \( p \in \R \) we can expand using Taylor theorem to get:
\[ \exists f \in \O_\R(\R) , \quad e^x = e^p + e^p (x-p) + (x-p)^2 f(x) \]
If we substitute this expression for \( e^x \) into the definition of \( \alpha \), a direct calculation shows that  \( \alpha \in \m_p \O_\R \). This shows that \( \alpha \) evaluates to zero on every fiber. It is more difficult to show that \( \alpha \neq 0 \). To the best of the author's knowledge this is a folkloric result but an explanation of this fact can be found in a MathOverflow post of David Speyer\cite{MOKahler}.
\end{example}
%%%%%%%%%%%%%%%%%%%%%%%%%%%%%%
%%%%%%% Section: Fiber Determined - Over topological ringed spaces
%%%%%%%%%%%%%%%%%%%%%%%%%%%%%%
\subsection{Over topological ringed spaces}
If \( X \) is a locally ringed space such that \( \O_X \) is a subsheaf of \( \C^0_X(U) \), we say that \( X \) is \emph{topological}.
%%%%%%%%%%%%%%%%%%%%%%%%%%%%%%
%%%%%%% Definition: Coefficient topology
%%%%%%%%%%%%%%%%%%%%%%%%%%%%%%
\begin{definition}
Suppose \( X \) is a topological ringed space and \( \E \) is a sheaf of modules over \( X \). Given an open subset \(U \subset X \), the \textbf{coefficient topology}\cite{villatoro2019integration} on \( \E(U) \) is defined as follows: A subset \(V \subset \E(U) \) is open if and only if for all module homomorphisms \( F \colon \O_X(U)^n \to \E(U) \) we have that the inverse image \( F^{-1}(V) \) is open.

The coefficient topology can be extended to the fiber space. We say a subset \( V \subset \Fib(E) \) is open if and only for all open sets \( U \subset X \) and functions:
\[ ev_U \colon U \times \E(U) \to \Fib(\E) \qquad (x,e) \mapsto \Fib(e)_x \]
we have that \( ev_U^{-1}(V) \) is open.
\end{definition}
This topology is particularly nice when the module is fiber determined.
%%%%%%%%%%%%%%%%%%%%%%%%%%%%%%
%%%%%%% Lemma: Topology of fiber determined module is Hausdorff
%%%%%%%%%%%%%%%%%%%%%%%%%%%%%%
\begin{lemma}
Suppose \( \E \) is a fiber determined sheaf of modules over a topological ringed space \(X \). 
\begin{itemize}
\item Then for all \( U \subset X \) open we have that \( \E(U) \) is a Hausdorff topological vector space.
\item The set of zeros in \( \Fib(\E) \) is closed and for each \( x \in M \) and the subspace topology on the fiber \( \Fib(\E)_x \) makes it into a Hausdorff topological vector space.
\end{itemize}
\end{lemma}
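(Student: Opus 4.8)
The whole plan rests on reading the coefficient topology for what it is, a final topology, and using its universal property: a map out of \( \E(U) \) is continuous exactly when its composite with every module homomorphism \( F\colon \O_X(U)^n \to \E(U) \) is continuous. The one feature of the standing hypothesis I would isolate at the outset is that, since \( \O_X(U)\subseteq \C^0_X(U) \), every point-evaluation \( \O_X(U)\to\R \), \( f\mapsto f(x) \), is continuous; essentially everything below is built from this single fact.

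I would establish the vector space topology on \( \E(U) \) first. Using the universal property one checks immediately that every translation \( e\mapsto e+e_0 \) and every dilation \( e\mapsto ce \) is continuous (each is \( F'\circ g \) for a suitable test map \( F' \) and an affine map \( g \) of some \( \O_X(U)^n \)), so each is a homeomorphism; it therefore suffices to treat joint continuity of the operations near the origin. The clean way to do this is to realize \( \E(U) \) as a quotient: the module map \( q\colon \bigoplus_{e\in\E(U)}\O_X(U)\to\E(U) \) sending the \( e \)-th generator to \( e \) is, because every test map factors through it, a quotient map onto \( \E(U) \) with the coefficient topology, and a quotient of a topological vector space by a linear subspace is again a topological vector space (the quotient map being open). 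The same description shows \( \E(U) \) is a topological \( \O_X(U) \)-module.

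For Hausdorffness I would manufacture separating continuous functionals. Fix \( x\in U \) and an arbitrary \( \R \)-linear \( \lambda \) on the fiber \( \Fib(\E)_x \), and put \( \psi(e):=\lambda(\Fib(e)_x) \). Continuity of \( \psi\colon\E(U)\to\R \) is checked on a test map \( F(a)=\sum_i a_i e_i \), where it reduces to
\[ \psi\bigl(F(a)\bigr)=\lambda\!\Bigl(\sum_i a_i(x)\,\Fib(e_i)_x\Bigr)=\sum_i a_i(x)\,\lambda\bigl(\Fib(e_i)_x\bigr), \]
which is visibly continuous because each \( a\mapsto a_i(x) \) is. Now if \( e\neq0 \) then the hypothesis that \( \E \) is fiber determined supplies some \( x \) with \( \Fib(e)_x\neq0 \) and an algebraic \( \lambda \) detecting it, so \( \psi(e)\neq0 \); hence the continuous functionals separate points and \( \E(U) \) is Hausdorff.

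For the fiber, running the quotient argument of the second paragraph with the free modules \( \O_X(U)^n \) replaced by their fibers \( \R^n \) presents \( \Fib(\E)_x \) as a quotient of a direct sum of lines, giving the topological vector space structure — here one must check that this quotient actually computes the subspace topology, which is a point I would verify carefully. The step I expect to be the real obstacle, however, is the closedness of the zero section \( Z\subseteq\Fib(\E) \), equivalently the openness of \( \{(x,e)\in U\times\E(U):\Fib(e)_x\neq0\} \) for every \( U \). Unlike the Hausdorff argument this cannot be done at a single \( x \): one must keep \( \Fib(e)_x\neq0 \) as \( x \) ranges over a neighborhood, and the finality of the topology gives no handle on neighborhoods. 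The natural attack is to pair against module homomorphisms \( \xi\colon\E\to\O_X \), for which \( (x,e)\mapsto\xi(e)(x) \) is jointly continuous — continuity of \( e\mapsto\xi(e) \) in the coefficient topology, followed by joint continuity of evaluation on \( \C^0_X \), which is where local compactness of \( X \) enters — while \( \Fib(e)_x=0 \) forces \( \xi(e)(x)=0 \). The delicate point, and the one I would scrutinize hardest, is the converse detection: whether such covectors see the entire vanishing locus. This is exactly the subtlety that separates fiber determined modules from their ill-behaved quotients, and I expect it to require local compactness of \( X \) together with a sufficient local supply of covectors for \( \E \). Once \( Z \) is known to be closed, Hausdorffness of each fiber is immediate, since \( \{0_x\}=Z\cap\Fib(\E)_x \) is then closed and \( \Fib(\E)_x \) is a topological vector space.
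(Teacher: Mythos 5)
Your treatment of the first bullet is sound and takes a genuinely different route from the paper. The paper disposes of the vector space operations in one line and proves Hausdorffness by showing \( \{0\} \) is closed: it reduces to showing \( F^{-1}(0) \) closed for every test map \( F \colon \O_X(U)^n \to \E(U) \), which it checks with a convergent-sequence argument using continuity of the evaluations into the fibers. Your separation of points by the continuous functionals \( e \mapsto \lambda(\Fib(e)_x) \) is at least as strong: it yields Hausdorffness directly rather than only closedness of \( \{0\} \), and it avoids the paper's implicit assumption that sequences suffice to detect closure. One caveat: your quotient presentation via \( q \colon \bigoplus_{e \in \E(U)} \O_X(U) \to \E(U) \) does correctly identify the coefficient topology with the quotient topology (every test map factors through \( q \) and conversely), but it silently assumes the final topology on an infinite direct sum is itself a vector topology — joint continuity of addition does not pass automatically to products of final topologies. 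The paper glosses the same point with ``clearly continuous,'' so you are no worse off, but neither argument is complete on this point.

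The genuine gap is the second bullet: you never prove that the zero set \( 0_X \subset \Fib(\E) \) is closed, and the strategy you sketch would not succeed. Pairing against module homomorphisms \( \xi \colon \E \to \O_X \) founders exactly where you suspect: the fiber determined hypothesis supplies no covectors at all (for a general \( \E \) the sheaf \( \Hom(\E, \O_X) \) can be very small), and even when covectors abound there is no reason that vanishing of every \( \xi(e) \) at \( x \) detects \( \Fib(e)_x = 0 \) — fiber determinedness constrains sections, not duals — while the local compactness of \( X \) you invoke is nowhere among the hypotheses. The paper's own argument is entirely different and much shorter: by definition of the topology on \( \Fib(\E) \), closedness of \( 0_X \) reduces to closedness of \( ev_U^{-1}(0_X) \subset U \times \E(U) \) for every open \( U \), and the paper then identifies \( ev_U^{-1}(0_X) \) with \( U \times \{0\} \) via the fiber determined condition. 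That said, your instinct that one must control \( \Fib(e)_x \) as \( x \) varies is pointing at a real weakness in that identification as literally stated: fiber determinedness says only that a section vanishing at \emph{every} point of \( U \) is zero, whereas \( ev_U^{-1}(0_X) \) consists of pairs \( (x,e) \) with \( \Fib(e)_x = 0 \) at that single \( x \), so any nonzero section vanishing somewhere shows the preimage is strictly larger than \( U \times \{0\} \). Your proposal is therefore incomplete on this bullet, but the obstacle you isolated is precisely the step the paper dispatches in one line.
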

\begin{proof}
For the first part, both addition and scalar multiplication are clearly continuous in this topology. So we only need to show that \( 0 \) is closed. For this, it suffices to show that \( F^{-1}(0) \subset \C^0_X(U)^n \) is closed for an arbitrary \( F \colon \C^0_X(U)^n \to \E(U) \).

Suppose \( \{u\}_{i=1}^\infty \) is a convergent sequence of elements in the kernel of \( F \) and let \( u_{\infty} \in \Cinf_M(U)^n \) be the limit. For an \( x \in U \) the function:
\[ Ev_x \colon \Cinf_M(U)^n \to \Fib(\E)_x \]
is continuous so it follows that \(u_\infty \in \ker Ev_x \) for all \( x \in U \). This implies that \( \Fib(F(u_{\infty}))_x = 0 \) for all \( x \) and therefore \( u_{\infty} \in \ker F \). This completes the argument that \( 0 \in \E(U) \) is a closed point.

Let \( 0_X \subset \Fib(\E) \) denote the set of all zeros. To show that \( 0_X \) is closed, it suffices to show that for any open set \( U \subset X \), the inverse image of \( 0_x \) under \( ev_U \) is closed. However, \( ev_U^{-1}(0_X) \) is precisely the set of elements of \( \E(U) \) that evaluate to zero at every point of \( U \). Since \( \E \) is fiber determined, it follows that \( ev_U^{-1}(0_X) = U \times \{ 0 \} \) which is closed.

The topology makes it clear that for all \( x \in M \) the addition and scalar multiplication operations on \( \Fib(\F)_x \) are continuous. Furthermore, the projection map \( \pi \colon \Fib(\E) \to X \) is also continuous. Therefore the intersection \( \{ 0 \} = 0_X \cap \pi^{-1}(x) \) is closed.
\end{proof}
One corollary of this result is that the value of a section in a fiber determined module is determined by its value on a dense open set.
%
%%%%%%%%%%%%%%%%%%%%%%%%%%%%%%
%%%%%%% Lemma: Section of fiber determined module is determined by values on dense open set.
%%%%%%%%%%%%%%%%%%%%%%%%%%%%%%
%
\begin{lemma}\label{lemma:fd.topological.lrc}
Suppose \( \E \) is a fiber determined module over a topological ringed space \(X \). Let \( U \) be an open set and suppose \( e \in \E(U) \) has the property that the restriction of \( e \) to a dense open subset of \( U \) is zero. Then \( e = 0 \).
\end{lemma}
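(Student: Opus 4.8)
The plan is to reduce the claim to a statement about the topology on the fiber space $\Fib(\E)$ and then invoke the fact, established in the previous lemma, that the zero set $0_X \subset \Fib(\E)$ is closed. Since $\E$ is fiber determined, it suffices to show that $\Fib(e)_x = 0$ for every $x \in U$; once this is known, Definition~\ref{definition:fiber.determined} gives $e = 0$ directly.

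First I would package the fiber values of $e$ into a single map $s \colon U \to \Fib(\E)$, defined by $s(x) := \Fib(e)_x = ev_U(x, e)$. This is the composite of the slice inclusion $U \to U \times \E(U)$, $x \mapsto (x, e)$, with the evaluation map $ev_U$. The slice inclusion is continuous, since its $\E(U)$-coordinate is constant and hence continuous into the product equipped with the coefficient topology, while $ev_U$ is continuous by the very definition of the topology on $\Fib(\E)$ (which is the final topology making every such $ev_U$ continuous). Therefore $s$ is continuous.

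Next, since $0_X \subset \Fib(\E)$ is closed, the preimage $Z := s^{-1}(0_X) = \{\, x \in U : \Fib(e)_x = 0 \,\}$ is closed in $U$. Let $V \subset U$ be the dense open subset on which $e$ restricts to zero. For $x \in V$ the germ of $e$ at $x$ vanishes, so $\Fib(e)_x = 0$ and hence $V \subset Z$. A closed subset of $U$ containing the dense set $V$ must equal $U$, so $Z = U$; that is, $\Fib(e)_x = 0$ for all $x \in U$, and fiber determination yields $e = 0$.

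The argument is short, and the only point requiring genuine care is the continuity of $s$ — concretely, confirming that the slice map $x \mapsto (x,e)$ lands continuously in $U \times \E(U)$ and that the final topology on $\Fib(\E)$ indeed renders each $ev_U$ continuous. Everything else is a direct application of the closedness of the zero set from the preceding lemma together with the defining property of fiber determined modules, so I do not anticipate any serious obstacle beyond bookkeeping with these topologies.
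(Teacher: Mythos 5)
Your argument is correct and is essentially the paper's own proof: the paper likewise defines the map \( \sigma_e \colon U \to \Fib(\E) \), \( x \mapsto \Fib(e)_x \), observes it is continuous for the topology on \( \Fib(\E) \), and concludes that \( \sigma_e^{-1}(0_X) \) is closed, hence all of \( U \), so fiber determination forces \( e = 0 \). Your only addition is to spell out the continuity of \( \sigma_e \) by factoring it through the slice map \( x \mapsto (x,e) \) and the evaluation \( ev_U \), which the paper simply asserts.
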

\begin{proof}
Given an element \( a \in \E(U) \), there is a natural function \( \sigma_e \colon M \to \Fib(\E) \) defined by \( \sigma_e(x) = \Fib(e)_x \). Furthermore, the topology on \( \Fib(\E) \) makes \( \sigma_e \) continuous. Therefore, \( \sigma_e^{-1}(0_X) \) is a closed set. Since we have assumed \( \sigma_e^{-1}(0_X) \) contains an open dense subset of \( U \), we conclude that it is all of \( U \).
\end{proof}'
%
%
%%%%%%%%%%%%%%%%%%%%%%%%%%%%%%
%%%%%%% Section: Fiber Determined - Over manifolds
%%%%%%%%%%%%%%%%%%%%%%%%%%%%%%
%
%
\subsection{Over manifolds}
When the base space is a manifold, we can say even more about fiber determined modules. In this subsection we will see that it makes sense to talk about smooth time-dependent sections of \( \Cinf_M\)-modules and that for fiber determined \( \Cinf_M\)-modules we can make sense of integration and differentiation.
%
%%%%%%%%%%%%%%%%%%%%%%%%%%%%%%
%%%%%%% Definition: Coefficient diffeology
%%%%%%%%%%%%%%%%%%%%%%%%%%%%%%
%
%
\begin{definition}\label{definition:coefficient.diffeology}
Suppose \( \E \) is a sheaf of modules over a smooth manifold \( M \). If \( N \) is a manifold, a function \( \alpha \colon N \to \E(M) \) is said to be smooth if there exists the following:
\begin{itemize}
\item A locally finitely collection of smooth functions \( \{ c^i \colon N  \to \Cinf_M(M) \}_{i \in I } \)
\item A set of elements \( \{ e_i \}_{i \in I} \in \E(M) \)
\end{itemize}
such that 
\[ \forall x \in N \qquad  \alpha(x) = \sum_{i \in I} c^i(x) e_i \]
Such a choice of \( \{ c^i \}_{i \in I} \) and \( \{ e_i \}_{i \in I} \) is called a \textbf{parameterization} of \( \alpha \).
\end{definition}
From the point of view of sheaves, this definition is quite reasonable since smooth functions \( \alpha \colon N \to \Cinf_M(M) \) can be seen as arising from global sections of the sheaf \( \pi_2^* \A \) over \( N \times M \). 
In the setting of fiber determined modules, this correspondence is one-to-one.
The following properties are easy to show from this definition:
\begin{itemize}
\item Parameterizations are not necessarily unique.
\item If \( \alpha \colon N \to \E(U) \) and \( \beta \colon N \to \E(U) \) are smooth then \( \alpha + \beta \) is smooth.
\item If \( \alpha \colon N \to \E(U) \) is smooth and \( f \in C_M^\infty(U) \) then \( f \alpha \) is smooth.
\item If \( \alpha \colon N \to \E(U) \) is smooth and \( V \subset U \) is open then the restriction \( \alpha_V \colon N \to \E(V) \) is smooth.
\item If \( f \colon N_1 \to N_2 \) is smooth and \( \alpha \colon N_2 \to \E(U) \) is smooth then \( \alpha \circ f \colon N_1 \to \E(U) \) is smooth.
\end{itemize}
When the module is not fiber determined, it is not necessarily the case that this notion of smoothness is robust enough for performing calculus with sections. However, fiber determined modules have a particularly nice property:
%
%%%%%%%%%%%%%%%%%%%%%%%%%%%%%%
%%%%%%% lemma: Calculus works over geometric modules
%%%%%%%%%%%%%%%%%%%%%%%%%%%%%%
%
%
\begin{lemma}\label{lemma:geometric.module.has.calculus}
Let \( \E\) be a fiber determined sheaf of modules on a manifold \( M \).
Suppose \( \alpha \colon [0,1] \to \E(U) \) is smooth with a parameterization \( \alpha = \sum_{i \in I} c^i e_i \) then the following expressions do not depend on the choice of parameterization:
\[ \frac{\dif \alpha}{\dif t} := \sum_{i \in I} \frac{\dif c^i}{\dif t} e_i \qquad \int_{0}^t \alpha(s) \dif s := \sum_{i \in I} \left(\int_{0}^t c^i(s) \dif s \right) e_i  \]
Furthermore, the fundamental theorem of calculus holds:
\[ \alpha(t) = \alpha(0) + \int_0^t  \frac{\dif \alpha}{\dif t}(s) \dif s \]
\end{lemma}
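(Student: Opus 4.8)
The plan is to reduce the entire statement to ordinary scalar calculus by passing to fibers, where the fiber determined hypothesis does the essential work. Suppose $\alpha = \sum_{i \in I} c^i e_i = \sum_{j \in J} d^j f_j$ are two parameterizations of the same smooth curve. To prove well-definedness it suffices to show that the sections $\sum_i \frac{\dif c^i}{\dif t} e_i - \sum_j \frac{\dif d^j}{\dif t} f_j$ and $\sum_i \left(\int_0^t c^i\, \dif s\right) e_i - \sum_j \left(\int_0^t d^j\, \dif s\right) f_j$ vanish in $\E(U)$. Since $\E$ is fiber determined (Definition~\ref{definition:fiber.determined}), it is enough to check that $\Fib(\cdot)_x = 0$ for every $x \in U$, which converts the problem into a statement about curves in the fixed $\R$-vector space $V_x := \Fib(\E)_x$.

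Fix $x \in U$. Because $M$ is a manifold, $\O_{X,x}/\m_x \cong \R$ via evaluation, so for $g \in \Cinf_M(U)$ and $e \in \E(U)$ we have $\Fib(g e)_x = g(x)\, \Fib(e)_x$ in $V_x$. Writing $\bar e_i := \Fib(e_i)_x$, $\bar f_j := \Fib(f_j)_x$, $a^i(t) := c^i(t)(x)$ and $b^j(t) := d^j(t)(x)$, the fiber curve $\gamma(t) := \Fib(\alpha(t))_x = \sum_i a^i(t)\,\bar e_i = \sum_j b^j(t)\,\bar f_j$ is a finite sum near each $t$ by local finiteness, and a genuinely finite sum over any compact subinterval by compactness. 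Now I test against an arbitrary linear functional $\lambda \in V_x^*$: the scalar function $t \mapsto \lambda(\gamma(t)) = \sum_i a^i(t)\lambda(\bar e_i)$ is smooth and may be differentiated and integrated term by term, giving $\lambda\!\left(\sum_i \frac{\dif a^i}{\dif t}\bar e_i\right) = \frac{\dif}{\dif t}\lambda(\gamma(t)) = \lambda\!\left(\sum_j \frac{\dif b^j}{\dif t}\bar f_j\right)$ and the analogous identity for the integral. Since linear functionals separate the points of any vector space, the two fiber expressions coincide; fiber determinedness then upgrades this to the desired equality of sections, proving that both the derivative and the integral are independent of the parameterization.

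For the fundamental theorem of calculus I would use the well-definedness just established to compute each side with a single chosen parameterization $\alpha = \sum_i c^i e_i$. Since $\frac{\dif \alpha}{\dif t} = \sum_i \frac{\dif c^i}{\dif t} e_i$ is a valid parameterization of the derivative, by definition $\int_0^t \frac{\dif \alpha}{\dif t}(s)\,\dif s = \sum_i \left(\int_0^t \frac{\dif c^i}{\dif t}(s)\,\dif s\right) e_i$. Applying the ordinary fundamental theorem of calculus to each $c^i$, viewed as an element of $\Cinf([0,1]\times U)$ and integrated pointwise in $x$, gives $\int_0^t \frac{\dif c^i}{\dif t}(s)\,\dif s = c^i(t) - c^i(0)$ in $\Cinf_M(U)$; summing yields $\alpha(t) - \alpha(0)$.

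The main obstacle is conceptual rather than computational: the fibers $\Fib(\E)_x$ need not be finite dimensional, so one cannot simply choose a basis and differentiate coordinates. The device that resolves this is testing against arbitrary linear functionals combined with the local finiteness of the parameterization, which collapses every relevant expression to a finite scalar sum where classical calculus applies; fiber determinedness is then precisely the tool that promotes fiberwise equalities to equalities of sections. I would take some care to verify that local finiteness genuinely makes the sums under the integral finite on $[0,t]$ (via compactness), since that is what justifies interchanging $\lambda$, the summation, and the integral.
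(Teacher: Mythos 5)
Your proof is correct, but it takes a genuinely different route from the paper's. The paper does not pass to fibers: it fixes both parameterizations, reduces locally to finite index sets \( |I| = n \), \( |J| = m \), and regards the tuple of difference quotients of the coefficients (with a sign flip on the second family) as a smooth path, parameterized by \( \epsilon \), in the kernel of the module homomorphism \( F \colon \Cinf_M(U)^{n+m} \to \E(U) \) determined by the combined generators \( (e_1, \ldots, e_n, f_1, \ldots, f_m) \). The earlier topological lemma (fiber determinedness makes the coefficient topology on \( \E(U) \) Hausdorff, so kernels of such homomorphisms are closed) then forces the limit as \( \epsilon \to 0 \) to remain in \( \ker F \), which is exactly the statement that the two candidate derivatives agree; the integral and the fundamental theorem of calculus are handled as you do, by classical calculus on a single parameterization. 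Your fiberwise argument replaces this topological limit argument with the definition of fiber determinedness used directly: evaluation at \( x \) collapses everything to a finite scalar problem (via local finiteness plus compactness of \( [0,1] \times \{x\} \)), and separation by linear functionals on \( \Fib(\E)_x \) finishes it --- in fact the functionals are not strictly needed, since over a compact interval the fiber curve lives in a finite-dimensional subspace where you may differentiate coordinates outright. What your approach buys is self-containedness and pointwise transparency, avoiding the coefficient topology altogether; what the paper's buys is the extra information that \( \frac{\dif \alpha}{\dif t} \) is a genuine limit of difference quotients in the coefficient topology on \( \E(U) \), tying the lemma to the preceding subsection (your fiberwise differentiation is, in effect, the continuity of the evaluation maps \( Ev_x \) that the paper's topological lemma encodes). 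One small point worth making explicit: the families \( \{ \frac{\dif c^i}{\dif t} \} \) and \( \{ \int_0^t c^i \} \) inherit local finiteness from \( \{ c^i \} \), so the candidate sections are well-defined elements of \( \E(U) \) before you evaluate their fibers; this is the step that licenses applying \( \Fib(\cdot)_x \) termwise.
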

\begin{proof}
For the first part, we will only show that \( \frac{\dif \alpha}{\dif t} \) is well defined since the case of the integral follows nearly identical reasoning. We need to show that the right hand side of the definition of \( \frac{\dif \alpha}{\dif t} \) is independent of the choice of parameterization.

Suppose \( \alpha (t) = \sum_{j\in J} \til c^j f_j \) is another parameterization of \( \alpha \). Since we only need to check independence of parameterization locally, we can assume without loss of generality that \( I \) and \(  J \) are finite sets with order \( n \) and \( m \) respectively. For each \( \epsilon > 0 \) and \( t_0 \in (0,1) \) we have that:
\[ \frac{ \alpha(t_0 + \epsilon) - \alpha(t_0) }{\epsilon} = \sum_{i=1}^n \frac{c^i(t_0 + \epsilon) - c^i(t_0)}{\epsilon} e_i = \sum_{j=1}^m \frac{\til c^j(t_0 + \epsilon) - \til c^j(t_0)}{\epsilon} f_j \]
Let \( F \colon \Cinf(M)^{n+m} \to \E \) be the module homomorphism associated to the generators \( \{ e_1 , \ldots , e_n , f_1 , \ldots f_m \} \subset \E(M) \). Then we see that the tuple:
\[ \left(  \left ( \frac{c^i(t_0 + \epsilon) - c^i(t_0)}{\epsilon} \right)_{i=1}^n ,  \left( -\frac{\til c^j(t_0 + \epsilon) - \til c^j(t_0)}{\epsilon} \right)_{j=1}^m \right) \]
represents a smooth path, parameterized by \( \epsilon \), in \( \Cinf_M(M)^{n+m} \) within the kernel of \( F \). Recall the topology on \( \E \) which we defined in Lemma~\ref{lemma:fd.topological.lrc}. In that lemma, we saw that this topology makes \( F \) smooth and the zero section closed. Furthermore, it is clear that smooth paths in \( \E(U) \) are continuous relative to this topology. Therefore, follows that the limit as \( \epsilon \) goes to zero is in the kernel of \( F \).

To see that the fundamental theorem of calculus holds, just apply the usual fundamental theorem of calculus to any given parameterization.
\end{proof}
%
%%%%%%%%%%%%%%%%%%%%%%%%%%%%%%
%%%%%%% Section: Lie-Rinehart Structures
%%%%%%%%%%%%%%%%%%%%%%%%%%%%%%
%
%
\section{Lie-Rinehart structures}\label{section:lie.rinehart.structures}
In this section, we will finally get to the main object of study. Lie-Rinehart structures on locally ringed spaces and, more specifically, manifolds.
%
%%%%%%%%%%%%%%%%%%%%%%%%%%%%%%%
%%%%%%% Section: Lie-Rinehart Structures - Sheaves of Lie-Rinehart pairs
%%%%%%%%%%%%%%%%%%%%%%%%%%%%%%
%
%
\subsection{Sheaves of Lie-Rinehart pairs}
Given a sheaf of modules \( \E \) over \( X \), we write \( \Der(\O_X, E) \) to denote the sheaf of derivations on \( \O_X \) with values in \( \E \). We may write \( \Der(\O_X) \) as short hand for \( \Der(\O_X,\O_X)\). In the smooth setting, the sheaf \( \Der(\O_X) \) can be canonically identified with the sheaf of vector fields on \( X \). Since we are operating over a ground field \( \R \), \( \Der(\O_X) \) is also canonically a sheaf of Lie algebras.
%
%%%%%%%%%%%%%%%%%%%%%%%%%%%%%%%
%%%%%%% Definition: Lie-Rinehart structure
%%%%%%%%%%%%%%%%%%%%%%%%%%%%%%
%
%
\begin{definition}\label{definition:lie.rinehart.structure}
A \textbf{Lie-Rinehart(LR) structure} on a ringed space \( X \) consists of the following data:
\begin{itemize}
\item A sheaf of modules \( \A \) over \( X \),
\item For each \( U \subset X \), a Lie bracket on \( \A(U) \) which together make \( \A \) into a sheaf of Lie algebras.
\item A module homomorphism \( \rho_\A \colon \A \to \Der(\O_X) \) called the \textbf{anchor map}
\end{itemize}
We require this data satisfy the following compatibility condition:
\begin{enumerate}
\item \( \rho \) is a morphism of sheaves of Lie algebras\footnote{ When all modules involved are finitely generated and projective, this condition can be dropped.};
\item for all \( U \subset X \) open with \( a, b \in \A(U) \) and \( u \in \O_X(U) \) we have that:
\[ [ a, ub ] =   u [a, b] + \rho_a(u) b \]
\end{enumerate}
A \textbf{fiber determined Lie-Rinehart structure} is a Lie-Rinehart structure on \( X \) for which the sheaf of modules \( \A \) is fiber determined.
\end{definition}
A Lie-Rinehart structure on \(X \) can equivalently be defined to be a topological space equipped with a sheaf valued in \( \LR \). By forgetting the Lie algebra portion of the sheaf of LR pairs, one recovers a sheaf of algebras and therefore such a sheaf always defines a ringed space.
%
%%%%%%%%%%%%%%%%%%%%%%%%%%%%%%%
%%%%%%% Example: Spectrum of Lie-Rinehart pair
%%%%%%%%%%%%%%%%%%%%%%%%%%%%%%
%
%
\begin{example}[Spectrum of an LR pair]\label{example:lie.rinehart.structure.spectrum}
Suppose \( (\A,R) \) is an LR pair. Take \( X = \Spec(R) \) to be the topological space of prime ideals of \( R \). There is a standard basis for the topology on \( \Spec(R) \) which is indexed by elements of \( R \):
\[ \forall f \in R \qquad D_r := \{ \mathfrak{p} \in \Spec(R) \ : \ f \notin \mathfrak{p} \}  \]
On this basis, the structure sheaf takes the form \( \O_X (D_r) := R_f \) where \( R_f \) is the localization of \( R \) at the ideal generated by \( f \). 

We can similarly define a sheaf of modules on \( X \) by taking localizations of \( \A \):
\[ \forall f \in R \qquad D_f \mapsto \A_f \]
Given \( f \in R \) and an element of \( \A_f \), one defines a derivation of \( R_f \): 
\[\frac{a}{r} \in \A_f , \ \frac{s}{t} \in R_f \qquad  \L_{(a/r)} (s/t) :=  \frac{s \L_a(t) - t \L_a(s) }{r s^2}  \]
This defines an anchor map for our sheaf of \( \O_X \)-modules. 

Finally, there is also a natural Lie bracket:
\[\forall a/r , b/s \in A_f \qquad  [a/r, b/s ]  =  \frac{ [a, b]}{r s} + \frac{-\L_b (r) a}{s t^2} -\frac{-\L_a (s) b}{t s^2} \]
\end{example}
We can generalize morphisms and comorphisms of LR pairs to comorphisms and morphisms of LR spaces in a natural way.
%
%%%%%%%%%%%%%%%%%%%%%%%%%%%%%%%
%%%%%%% Definition: Morphism of Lie-Rinehart structures
%%%%%%%%%%%%%%%%%%%%%%%%%%%%%%
%
%
\begin{definition}\label{definition:lie.rinehart.structure.morphism}
Suppose \( \A \) and \( \B \) are LR structures over \( M \) and \( N \), respectively. A \textbf{LR morphism} 
\[ (F, f ) \colon  (\A, X) \to  (\B, Y)  \]
is a morphism of sheaves of modules such that for all \( U \subset X \) open we have that 
\[ (F_U,f_U^* ) \colon (\A(U), \O_X(U) ) \to ( f^*\B(U), f\inv\O_Y(U))\] 
is a comorphism of Lie-Rinehart pairs. The category of Lie-Rinehart spaces equipped with this notion of morphisms is denoted \( \LieRS \).
\end{definition}
This definition directly generalizes the traditional definition of a morphism of Lie algebroids. As an exercise, we invite the reader to check that the spectrum operation defines a functor \( \CLR  \to  \LieRS \).
There is also a notion of comorphism.
%
%%%%%%%%%%%%%%%%%%%%%%%%%%%%%%%
%%%%%%% Definition: Comorphism of Lie-Rinehart structures
%%%%%%%%%%%%%%%%%%%%%%%%%%%%%%
%
%
\begin{definition}\label{definition:lie.rinehart.structure.comorphism}
Suppose \( \A \) and \( \B\) are LR structures over \( X \) and \( Y \), respectively. An \textbf{LR comorphism} 
\[ (F, f ) \colon  (\A, X) \to  (\B, Y)  \]
is a comorphism of sheaves of modules such that for all \( U \subset X \) open we have that 
\[ (F_U,f_U^* ) \colon (\B(U), \O_Y(U) ) \to ( f_* \A(U), f_* O_X(U))\] 
is a comorphism of Lie-Rinehart pairs. The category of Lie-Rinehart spaces with comorphisms is denoted \( \CLieRS \).
\end{definition}
%
%%%%%%%%%%%%%%%%%%%%%%%%%%%%%%%
%%%%%%% Section: Lie-Rinehart structures - Factorization of (co)morphisms
%%%%%%%%%%%%%%%%%%%%%%%%%%%%%%
%
%
\subsection{Factorization of (co)morphisms}
We saw earlier that morphisms and comorphism of LR pairs always factor through morphisms which cover the identity. The same thing is true of LR spaces. In order to state these theorems we need to define the analogues of actions and base changes for the sheaf setting.
%
%%%%%%%%%%%%%%%%%%%%%%%%%%%%%%%
%%%%%%% Definition: Action of a Lie-Rinehart structure
%%%%%%%%%%%%%%%%%%%%%%%%%%%%%%
%
%
\begin{definition}\label{definition:lie.rinehart.structure.action}
Suppose \( \A  \) is a LR structure over \( X \) and \( f \colon Y \to X \) is a morphism of ringed spaces. An \textbf{action} of \( \A \) along \( f \) consists of a \( \O_X \)-module homomorphism \( \alpha \colon \A \to f_* \Der \O_Y \) which is also a Lie algebra homomorphism. 
\end{definition}
%
%%%%%%%%%%%%%%%%%%%%%%%%%%%%%%%
%%%%%%% Example: Action of a Lie algebra
%%%%%%%%%%%%%%%%%%%%%%%%%%%%%%
%
%
\begin{example}\label{example:lie.rinehart.structure.action.lie.algebra}
Suppose \( \g \) is a Lie algebra and \( Y \) is a smooth manifold. We can think of \( \g \) as a Lie-Rinehart space with a trivial underlying space \( \{ * \} \). There is a unique smooth function \( f \colon Y \to \{ * \} \) and a module homomorphism \( \g \to f_* \Der \O_Y \) is the same as a Lie algebra homomorphism \( \g \to \mathfrak{X}(Y) \). In this way, we see that an action of \( \g \) as a Lie-Rinehart space is the same as a classical Lie algebra action.
\end{example}
%
%%%%%%%%%%%%%%%%%%%%%%%%%%%%%%%
%%%%%%%  Lemma: Lie-Rinehart structure from action
%%%%%%%%%%%%%%%%%%%%%%%%%%%%%%
%
%
\begin{lemma}\label{lemma:lie.rinehart.structure.action.yields.structure}
Suppose \( \A \) is an LR structure on \( X \) and \( \alpha \colon \A \to f_* \Der \O_Y \) is an action along \( f \). Then \( f^* \A \) is canonically an LR structure on \( Y \) and \[ (1 \otimes_{\Id_A} , f ) \colon \A \to f^* \A  \]
is a comorphism of LR spaces.
\end{lemma}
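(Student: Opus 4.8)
The plan is to transport the pair-level construction of Lemma~\ref{lemma:lie.rinehart.pair.action.defines.morphism} to the sheaf setting: I would build the anchor and bracket on $f^*\A$ explicitly, and then inherit the Lie-Rinehart axioms from the pair-level statement by passing to stalks. Throughout I use that $f \colon Y \to X$ and that, by Definition~\ref{definition:pullback.module}, $f^*\A = \O_Y \otimes_{f\inv \O_X} f\inv\A$.

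First I would produce the anchor. Applying the adjunction of Equation~\ref{eqn:image.adjunction} to the action $\alpha \colon \A \to f_*\Der\O_Y$ yields an adjoint map $\til\alpha \colon f\inv\A \to \Der\O_Y$ of sheaves on $Y$; since $f\inv$ is exact and compatible with brackets and with the $f\inv\O_X$-action, $\til\alpha$ is $f\inv\O_X$-linear and a homomorphism of sheaves of Lie algebras. I then extend $\til\alpha$ to an $\O_Y$-linear map by
\[ \rho_{f^*\A} \colon f^*\A \to \Der\O_Y, \qquad u \otimes a \mapsto u\,\til\alpha(a), \]
which is well defined precisely because $\til\alpha$ is $f\inv\O_X$-linear; this is the proposed anchor.

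Next I would define the bracket. Imitating the base-change formula of Definition~\ref{definition:lie.rinehart.pair.base.change}, I set, on the presheaf of tensors,
\[ [\,u\otimes a,\ v\otimes b\,] := uv\otimes[a,b] + u\,\til\alpha(a)(v)\otimes b - v\,\til\alpha(b)(u)\otimes a, \]
and extend to the sheafification defining $f^*\A$. The bulk of the verification --- well-definedness (independence of the tensor representation), antisymmetry, the Jacobi identity, and the Leibniz rule relative to $\rho_{f^*\A}$ --- I would carry out by reducing to stalks. At each $y \in Y$ one has $(f^*\A)_y \cong \O_{Y,y} \otimes_{\O_{X,f(y)}} \A_{f(y)}$, an honest base change of the pair $(\A_{f(y)}, \O_{X,f(y)})$ along $f^\#_y$, and the germ of $\til\alpha$ induces an action $\A_{f(y)} \to \Der(\O_{Y,y})$ in the sense of Definition~\ref{definition:lie.rinehart.structure.action}. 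Lemma~\ref{lemma:lie.rinehart.pair.action.defines.morphism} then equips each stalk with a unique Lie-Rinehart pair structure whose bracket and anchor are given by exactly the formulas above; since the globally defined operations agree with these stalkwise and all the axioms are equalities of sheaf maps, they hold. This also delivers the word \emph{canonically}: the stalkwise uniqueness in Lemma~\ref{lemma:lie.rinehart.pair.action.defines.morphism} forces uniqueness of the LR structure once one demands $\rho_{f^*\A}(1\otimes a) = \til\alpha(a)$.

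Finally I would check the comorphism claim. The underlying comorphism of sheaves of modules (in the sense of Definition~\ref{definition:sheaf.module.comorphism}, with space map $f \colon Y \to X$) is the unit $1\otimes\Id_\A \colon \A \to f_* f^*\A$, that is, the adjunction unit $\A \to f_* f\inv\A$ followed by $a \mapsto 1\otimes a$. To see it is an LR comorphism I verify the conditions of Definition~\ref{definition:lie.rinehart.pair.comorphism} underlying Definition~\ref{definition:lie.rinehart.structure.comorphism}: compatibility with the anchor is immediate from $\rho_{f^*\A}(1\otimes a) = \til\alpha(a)$, and compatibility with brackets follows by setting $u=v=1$ in the bracket formula, where the two derivation terms vanish since $\til\alpha(a)(1)=0$, leaving $1\otimes[a,b]$. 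I expect the main obstacle to be bookkeeping around the sheafification: sections of $f^*\A$ over an open are only \emph{locally} finite sums of elementary tensors, and $f\inv\O_X$ and $f\inv\A$ are given by colimits rather than the rings and modules appearing in the pair lemma, so the honest base change needed to invoke Lemma~\ref{lemma:lie.rinehart.pair.action.defines.morphism} literally only materializes at the level of stalks. Organizing the argument so that all well-definedness and axiom checks are performed stalkwise --- and noting that equalities of morphisms of sheaves of modules may be checked on stalks --- is what makes the proof go through cleanly.
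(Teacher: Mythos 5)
Your proposal is correct and takes essentially the same route as the paper: the paper's entire proof is the one-line observation that the statement follows from Lemma~\ref{lemma:lie.rinehart.pair.action.defines.morphism} applied at the level of stalks, which is precisely the reduction you organize your argument around. Your additional bookkeeping --- the adjoint map $\til\alpha \colon f\inv\A \to \Der\O_Y$, the explicit bracket on elementary tensors, and the remark that the honest base change only materializes stalkwise because $f^*\A$ involves a sheafification --- simply fills in details the paper leaves implicit.
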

\begin{proof}
This lemma follows immediately from Lemma 2.17 which implies this Lemma at the level of stalks.
\end{proof}
%
%%%%%%%%%%%%%%%%%%%%%%%%%%%%%%%
%%%%%%%  Theorem: Factorization of Lie-Rinehart comorphisms
%%%%%%%%%%%%%%%%%%%%%%%%%%%%%%
%
%
\begin{theorem}\label{theorem:lie.rinehart.structure.comorphism.factors}
Suppose \( (F, f) \colon \A \to \B  \) is a comorphism of LR spaces covering \( f \colon Y \to X \). Then there exists a unique comorphism 
\[  (\overline{F},\Id_Y ) \colon f^* \A \to  \B )  \] 
such that 
\[  (F, f) = (\overline{F} , \Id_Y ) \circ (1 \otimes \Id_{\A}  ,f)  \]
\[\begin{tikzcd}[column sep = large]
\B \arrow[rr, bend left, "\overline{F}"] \arrow[r, "{1 \otimes \Id_{\A}}"] &  f^* \B  \arrow[r, "{\overline{F}}" ] & \A \\
\end{tikzcd}\]
\end{theorem}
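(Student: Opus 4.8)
The plan is to globalize the proof of the algebraic factorization Theorem~\ref{theorem:lie.rinehart.pair.morphism.factors}, using Lemma~\ref{lemma:lie.rinehart.structure.action.yields.structure} to manufacture the intermediate object. First I would observe that \( F \) together with the anchor of \( \B \) furnishes an action of \( \A \) along \( f \): setting \( \alpha := \rho_\B \circ F \colon \A \to f_* \Der \O_Y \), the anchor- and bracket-compatibility clauses in the definition of the comorphism \( (F,f) \) (Definition~\ref{definition:lie.rinehart.structure.comorphism}) say exactly that \( \alpha \) is an \( \O_X \)-linear Lie algebra homomorphism, which is the sheaf-level incarnation of Example~\ref{example:lie.rinehart.pair.action.lie.rinehart.morphism}. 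Lemma~\ref{lemma:lie.rinehart.structure.action.yields.structure} then endows \( f^* \A \) with a canonical LR structure over \( Y \) for which \( (1 \otimes \Id_{\A}, f) \colon \A \to f^* \A \) is a comorphism; this supplies the right-hand factor.

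Next I would define the left-hand factor on sections by the scalar-extension formula
\[ \overline{F}(s \otimes a) := s\, F(a), \]
in direct imitation of the formula \( \overline{F}(s \otimes a) = s F(a) \) appearing in Theorem~\ref{theorem:lie.rinehart.pair.morphism.factors}. Because the elements \( 1 \otimes a \) generate \( f^* \A \) as an \( \O_Y \)-module, \( \O_Y \)-linearity forces this prescription; this simultaneously yields existence and the asserted uniqueness, since any \( (\overline{F}, \Id_Y) \) satisfying \( \overline{F} \circ (1 \otimes \Id_{\A}) = F \) must agree with it on the generators and hence everywhere. By construction \( (\overline{F}, \Id_Y) \circ (1 \otimes \Id_{\A}, f) = (F, f) \).

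To see that \( \overline{F} \) is a genuine comorphism of LR structures it remains to verify well-definedness (compatibility with the balancing over \( f\inv \O_X \)) together with the anchor- and bracket-compatibility axioms. All of these are conditions on sections over the opens of \( Y \), and over each such open the required statement is precisely the conclusion of the algebraic Theorem~\ref{theorem:lie.rinehart.pair.morphism.factors}: the LR structure placed on \( f^* \A \) by Lemma~\ref{lemma:lie.rinehart.structure.action.yields.structure} is, by construction, the one induced by the action \( \rho_\B \circ F \), which is exactly the hypothesis under which that theorem produces its factoring map. Since the formula \( s \otimes a \mapsto s F(a) \) is manifestly natural in the open set, the resulting maps commute with restriction and assemble into a single morphism of sheaves of \( \O_Y \)-modules.

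The step I expect to demand the most care is not any individual algebraic identity but the passage between the local and global pictures: I must confirm that the action hypothesis of Lemma~\ref{lemma:lie.rinehart.structure.action.yields.structure} holds compatibly on overlaps and that the presheaf tensor product underlying \( f^* \A \) interacts correctly with sheafification, so that \( \overline{F} \) is genuinely defined on \( f^* \A \) itself rather than merely on a generating presheaf. Once this bookkeeping is settled, the anchor and bracket identities transfer verbatim from the algebraic case and no further computation is required.
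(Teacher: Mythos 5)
Your proposal is correct and matches the paper's own argument: the paper likewise applies the pair-level factorization (Theorem~\ref{theorem:lie.rinehart.pair.morphism.factors}) on sections over each open \( U \) with \( V = f\inv(U) \), and invokes the uniqueness of that factorization to conclude compatibility with restrictions, so that the local factorizations assemble into one at the level of sheaves. Your extra attention to the explicit formula \( \overline{F}(s \otimes a) = sF(a) \), the action \( \rho_\B \circ F \) feeding Lemma~\ref{lemma:lie.rinehart.structure.action.yields.structure}, and the sheafification bookkeeping only makes explicit what the paper leaves implicit.
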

\begin{proof}
Given an open subset \( U  \subset N \) and \( V = f\inv(U) \). Theorem~\ref{theorem:lie.rinehart.pair.morphism.factors} tells us we have a canonical factorization at the level of sections:
\[\begin{tikzcd}
& (f^* \B( V)) , \Cinf_M(V) \arrow[dr , "{(\overline{F}_U, \Id_{\Cinf_M}(V))}"] & \\
( \B(U), \Cinf_N(U) ) \arrow[ur, "{(1 \otimes \Id_A , f_U))}"] \arrow[rr, "{(F_U, f_U)}"] & & ( \A(V)), \Cinf_M (V))
\end{tikzcd} \]
The uniqueness of this factorization implies that it is compatible with the restriction maps of the sheaves involved. Therefore, this factorization descends to a factorization at the level of sheaves.
\end{proof}
In the case where \( \A \) and \( \B \) are Lie algebroids, this factorization can take place entirely in the category of Lie algebroids. This is due to the fact that the construction only utilizes the standard pullback of a vector bundle which is perfectly well defined.

Of course, there is also a factorization theorem for LR morphisms. This result is more interesting since it is actually false in the category of Lie algebroids. To state it we should define the analogue of base change of an LR structure along a map.
%
%%%%%%%%%%%%%%%%%%%%%%%%%%%%%%%
%%%%%%% Definition: Base change of Lie-Rinehart structure
%%%%%%%%%%%%%%%%%%%%%%%%%%%%%%
%
%
\begin{definition}\label{definition:lie.rinehart.structure.base.change}
Suppose \( \B \) is an LR structure on \( Y \) and \( f \colon X \to Y \) is a morphism of ringed spaces. The \textbf{base change} of \( \B \) along \( f \) is defined to be the following fiber product of \( \Der (\O_X) \) and \( f^* \B \) as sheaves of \( \O_X \)-modules:
\[ 
\begin{tikzcd}
f^! \B \arrow[r] \arrow[d] & f^* \B \arrow[d] \\
\Der(\O_X) \arrow[r] & \Der(f^{-1}\O_Y, \O_X )
\end{tikzcd}
\]
\end{definition}
At the level of stalks, this base change operation coincides with the notion of base change we established for Lie-Rinehart pairs. The operation is well defined since all fiber products exist in the category of sheaves of modules. This is in contrast to the setting of Lie algebroids where this fiber product is only well defined in the presence of some transversality assumptions on the smooth function \( f \colon X \to Y \).
The anchor map on \( f^! \B \) comes from the left leg of the fiber product diagram. The Lie bracket structure is locally defined by the Lie bracket on the base change of a Lie-Rinehart pair we defined earlier in Definition~\ref{definition:lie.rinehart.pair.base.change}. The top part of the fiber product diagram also shows that the base change \( f^! \B \) comes with a canonical projection \( (\pi, f) \colon f^! \B \to \B \).
%
%%%%%%%%%%%%%%%%%%%%%%%%%%%%%%%
%%%%%%% Theorem: Factorization of Lie-Rinehart structure morphisms
%%%%%%%%%%%%%%%%%%%%%%%%%%%%%%
%
%
\begin{theorem}\label{theorem:lie.rinehart.structure.morphism.factors}
Suppose \( (F, f) \colon \A \to \B \) is a morphism of LR spaces covering \( f \colon X \to Y \). Then there exists a unique morphism of LR spaces \( (\underline{F}, \Id_X ) \colon \A \to f^!\B \) such that:
\[ (F, f) = (\pi, f) \circ (\underline{F}, \Id_X) \]
\[\begin{tikzcd}[column sep = large]
\A \arrow[rr, "{F}", bend left] \arrow[r, "{\underline{F}}"] & f^! \B  \arrow[r, "{\pi}"] & \B 
\end{tikzcd}\]
\end{theorem}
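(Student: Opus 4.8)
The plan is to prove this as the section-level shadow of Theorem~\ref{theorem:lie.rinehart.pair.comorphism.factors}, in the same spirit as the companion factorization Theorem~\ref{theorem:lie.rinehart.structure.comorphism.factors}. Recall that, by Definition~\ref{definition:lie.rinehart.structure.morphism}, saying $(F,f)$ is an LR morphism is exactly the statement that over every open $U \subset X$ the pair $(F_U, f_U^\#)$ is a comorphism of Lie-Rinehart pairs, and that (Definition~\ref{definition:lie.rinehart.structure.base.change}) the sheaf $f^! \B$ is the fiber product of $\Der(\O_X)$ and $f^* \B$ over $\Der(f^{-1}\O_Y, \O_X)$, whose LR structure is locally the pair-level base change of Definition~\ref{definition:lie.rinehart.pair.base.change}. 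Since finite limits of sheaves are computed section by section, $(f^! \B)(U)$ is literally the fiber product of the three modules of sections, so I can write its elements as compatible pairs.

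First I would define $\underline{F}$ by the same formula used in the proof of Theorem~\ref{theorem:lie.rinehart.pair.comorphism.factors}: over each open $U$, set $\underline{F}_U(a) := (\L_a, F_U(a))$ where $\L_a = \rho_\A(a)$. The first thing to check is that this lands in the fiber product, i.e. that $\L_a$ and $F_U(a)$ have the same image in $\Der(f^{-1}\O_Y, \O_X)(U)$; this is precisely the anchor-compatibility clause (a) of $(F,f)$ being an LR morphism, read through Definition~\ref{definition:lie.rinehart.pair.comorphism}. Because $\rho_\A$ and $F$ are both morphisms of sheaves of $\O_X$-modules, the assignment $U \mapsto \underline{F}_U$ is automatically $\O_X$-linear and natural in $U$, so $\underline{F}$ is a genuine morphism of sheaves of modules covering $\Id_X$; this is what lets me read off the sheaf map directly from the formula rather than descending via uniqueness.

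Next I would verify that $(\underline{F}, \Id_X)$ is an LR morphism, which is a local condition and hence reduces to the pair-level computation. Anchor compatibility is immediate since the anchor of $f^! \B$ is the left projection, so $\rho_{f^! \B}(\underline{F}(a)) = \L_a = \rho_\A(a)$. Bracket compatibility requires $\underline{F}([a,b]) = [\underline{F}(a), \underline{F}(b)]$; comparing componentwise, the first component reads $\L_{[a,b]} = \L_a \L_b - \L_b \L_a$, which holds because $\rho_\A$ is a Lie algebra morphism, and the second component is exactly the base-change bracket formula of Definition~\ref{definition:lie.rinehart.pair.base.change}, which by design coincides with the bracket-compatibility axiom (b) for the comorphism $F$. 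This is the one step with any content, and I expect it to be the main (though still routine) obstacle: it is pure substitution, but one must match the $D(d^j)\otimes f_j$ and $D'(c^i)\otimes e_i$ correction terms of the base-change bracket against the $\L_a(d^j)\otimes f_j$ and $\L_b(c^i)\otimes e_i$ terms of clause (b), using $D = \L_a$ and $D' = \L_b$.

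Finally I would dispatch the factorization and uniqueness. Composing with the canonical projection $(\pi, f) \colon f^! \B \to \B$, whose module part is the top leg $f^! \B \to f^* \B$ of the fiber product, sends $\underline{F}_U(a) = (\L_a, F_U(a))$ to $F_U(a)$, giving $(\pi,f) \circ (\underline{F}, \Id_X) = (F,f)$. For uniqueness, any factoring $(\underline{G}, \Id_X)$ must have second component $F_U(a)$ by the factorization identity, while its first component is forced to be $\rho_\A(a) = \L_a$ by the anchor-compatibility of an LR morphism covering the identity (the anchor of $f^! \B$ being the left projection); hence $\underline{G} = \underline{F}$. Alternatively, uniqueness follows abstractly from the section-wise uniqueness in Theorem~\ref{theorem:lie.rinehart.pair.comorphism.factors} together with naturality, exactly as in the proof of Theorem~\ref{theorem:lie.rinehart.structure.comorphism.factors}.
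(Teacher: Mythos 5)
Your proposal is correct and follows essentially the same route as the paper: the paper's proof simply reduces to the pair-level factorization of Theorem~\ref{theorem:lie.rinehart.pair.comorphism.factors} applied over each open set and glues via uniqueness, exactly as you do. Your write-up is merely more explicit (giving the formula $\underline{F}_U(a) = (\L_a, F_U(a))$, the sectionwise fiber-product description of $f^!\B$, and the componentwise bracket check), and your direct uniqueness argument via the forced first component agrees with the abstract descent argument you cite as an alternative.
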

\begin{proof}
The proof of this theorem is essentially the same argument as in Theorem~\ref{theorem:lie.rinehart.structure.comorphism.factors}, except that we rely on Theorem~\ref{theorem:lie.rinehart.pair.comorphism.factors} for the factorization at the level of sections.
\end{proof}
%
%%%%%%%%%%%%%%%%%%%%%%%%%%%%%%%
%%%%%%% Section: Adjoint integrability
%%%%%%%%%%%%%%%%%%%%%%%%%%%%%%
%
%
\section{Adjoint integrability}\label{section:frobenius.integrability}
Now, let us finally turn to fiber determined LR structures over smooth manifolds. In this section we will define a special class of fiber determined LR structures which we call adjoint integrable. It is adjoint integrable LR structures which will admit a particularly nice homotopy theory.
%
%%%%%%%%%%%%%%%%%%%%%%%%%%%%%%%
%%%%%%% Section: Adjoint integrability - Fiber determined LR structures
%%%%%%%%%%%%%%%%%%%%%%%%%%%%%%
%
%
\subsection{Fiber determined LR structures}
%
%%%%%%%%%%%%%%%%%%%%%%%%%%%%%%%
%%%%%%% Proposition: Fiber-determinization of Lie-Rinehart structures
%%%%%%%%%%%%%%%%%%%%%%%%%%%%%%
%
%
\begin{proposition}\label{proposition:FDLR}
Suppose \( \A \) is an LR structure over a smooth manifold \( M \). Then there is a unique way to make \( \A^{FD}  \) into an LR structure such that the canonical projection map \( \pi \colon \A \to \A^{FD} \) is an LR morphism.
\end{proposition}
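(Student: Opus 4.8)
The plan is to build the anchor and bracket on $\A^{FD}$ by pushing the corresponding structure on $\A$ through the canonical surjection $\pi\colon \A \to \A^{FD}$ supplied by Lemma~\ref{label:fiber.determinization}, and to extract uniqueness from the surjectivity of $\pi$. The first thing I would record is a reformulation of the target: unwinding Definitions~\ref{definition:lie.rinehart.structure.morphism} and \ref{definition:lie.rinehart.pair.comorphism} in the case where the underlying map is $\Id_M$, an LR morphism $(\pi, \Id_M)\colon \A \to \A^{FD}$ is exactly an $\O_M$-module morphism $\pi$ that intertwines the anchors ($\rho_{\A^{FD}}\circ\pi = \rho_\A$) and is a morphism of sheaves of Lie algebras ($\pi[a,b] = [\pi a, \pi b]$). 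So it suffices to produce an anchor and a bracket on $\A^{FD}$ for which these two identities hold, and both identities manifestly determine the new structure uniquely because $\pi$ is surjective. This settles uniqueness immediately and reduces existence to two constructions.

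For the anchor I would use that $M$ is a manifold: here $\Der(\O_M)$ is the sheaf of vector fields $\Gamma_{TM}$, which is fiber determined by Example~\ref{example:fd.sheaf.of.sections}. The universal property in Lemma~\ref{label:fiber.determinization} then factors the module morphism $\rho_\A\colon \A \to \Der(\O_M)$ uniquely through $\pi$, yielding $\rho_{\A^{FD}}\colon \A^{FD}\to\Der(\O_M)$ with $\rho_{\A^{FD}}\circ\pi=\rho_\A$. This is the only place the manifold hypothesis is needed; over a general ringed space $\Der(\O_X)$ need not be fiber determined.

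The bracket is the crux, and the heart of the matter is to show that $K := \ker\pi$ is a Lie ideal, so that setting $[\pi a,\pi b]:=\pi[a,b]$ is well defined (if $a-a',\,b-b'\in K$ then $[a,b]-[a',b']=[a-a',b]+[a',b-b']\in K$). The hard part is that the bracket is not $\O_M$-linear, so one cannot argue fiberwise in the naive way; indeed a pointwise computation of $\Fib([a,b])_x$ produces a term built from $\rho_b$ applied to the coefficients of $a$, which is not obviously zero. The trick I would use to sidestep this is to fix $b \in \A(U)$ and consider the map $\Psi_b\colon K|_U \to \A^{FD}|_U$, $a\mapsto \pi[a,b]$. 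Using $[fa,b]=f[a,b]-\rho_b(f)a$ together with $\pi(a)=0$ for $a\in K$, one checks $\Psi_b(fa)=f\pi[a,b]-\rho_b(f)\pi(a)=f\Psi_b(a)$, so $\Psi_b$ is $\O_M$-linear even though the bracket is not. Since $\A^{FD}$ is fiber determined and every section of $K$ vanishes in all fibers, we have $(K|_U)^{FD}=0$; the universal property of Lemma~\ref{label:fiber.determinization} therefore forces $\Psi_b$ to factor through the zero sheaf, i.e. $\Psi_b=0$. Hence $[a,b]\in K$ for all $a\in K$, $b\in\A(U)$, and $K$ is a Lie ideal.

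With $K$ an ideal, the bracket descends and I would finish by checking the LR axioms on $\A^{FD}$, each of which transfers from $\A$ using that $\pi$ is a surjective module and bracket morphism: antisymmetry and the Jacobi identity are immediate, the Leibniz rule follows from $[\pi a, u\,\pi b]=\pi[a,ub]=\pi(u[a,b]+\rho_a(u)b)=u[\pi a,\pi b]+\rho_{\A^{FD}}(\pi a)(u)\,\pi b$, and $\rho_{\A^{FD}}$ is a Lie-algebra morphism because $\rho_{\A^{FD}}[\pi a,\pi b]=\rho_\A[a,b]=[\rho_\A a,\rho_\A b]=[\rho_{\A^{FD}}\pi a,\rho_{\A^{FD}}\pi b]$. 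By construction the two reformulated identities then hold, so $\pi$ is an LR morphism. I expect the only genuine obstacle to be the ideal property of $K$; everything else is a formal consequence of the universal property and the surjectivity of $\pi$.
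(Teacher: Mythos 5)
Your overall architecture coincides with the paper's: uniqueness from surjectivity of \( \pi \), the anchor via the universal property of Lemma~\ref{label:fiber.determinization} applied to \( \rho_\A \colon \A \to \X_M \) (using that \( \X_M \) is fiber determined), and reduction of the bracket to showing \( K = \ker\pi \) is a Lie ideal. Your \( \Psi_b \) trick --- observing that \( a \mapsto \pi[a,b] \) is \( \O_M \)-linear on \( K \) because the anchor term \( \rho_b(f)\pi(a) \) dies --- is a genuinely nice reformulation. But the step where you conclude \( \Psi_b = 0 \) contains the real gap: you assert \( (K|_U)^{FD} = 0 \) ``since every section of \( K \) vanishes in all fibers.'' Sections of \( K \) vanish in the fibers of \( \A \) --- that is the definition of \( K \) --- but the fiber determinization of \( K \) is computed from the \emph{intrinsic} fibers \( \Fib(K)_x = K_x/\m_x K_x \), and the natural map \( K_x/\m_x K_x \to \A_x/\m_x \A_x \) need not be injective. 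What you actually need is \( K_x = \m_x K_x \), i.e., that near each point every \( \kappa \in K \) can be written \( \sum_i u_i a_i \) with \( u_i \in \m_x \) and coefficients \( a_i \) \emph{themselves in} \( K \); the definition of \( K \) only hands you coefficients in \( \A \). This is exactly the difficulty you flagged and claimed to sidestep --- the ``\( \rho_b \) applied to the coefficients'' term is harmless only when the coefficients lie in \( K \) --- so the \( \Psi_b \) trick relocates the problem rather than solving it.

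The paper's proof supplies precisely this missing ingredient, and it is the heart of its argument: writing \( \kappa = ua \) with \( u \in \m_x \) arranged to vanish transversely, it divides by \( u \) on the dense open set \( U' = U \setminus u^{-1}(0) \) to get \( a|_{U'} \in \ker\pi_{U'} \), and then invokes Lemma~\ref{lemma:fd.topological.lrc} (fiber-determinacy of \( \A^{FD} \) plus density of \( U' \)) to conclude \( a \in \ker\pi_U \); hence \( \kappa \in \m_x K \), and indeed \( \kappa \in \m_x^2\A(U) \), after which a one-line Leibniz computation shows \( [\A(U), \m_x^2\A(U)] \subset \m_x\A(U) \) and the ideal property follows. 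So your claim \( (K|_U)^{FD} = 0 \) is true over a manifold, but proving it requires the transversality/dense-open-set argument you omitted (which also uses the manifold hypothesis, contrary to your remark that the anchor factorization is the only place it enters). If you insert that argument to establish \( K_x = \m_x K_x \), your \( \O_M \)-linearity route becomes a clean alternative to the paper's order-of-vanishing bookkeeping; as written, however, the central step is unproven.
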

\begin{proof}
The uniqueness part follows from the fact that \( \pi \) is a surjective morphism of modules. Since \( (\X_M)^{FD} \cong \X_M \), the anchor map \( \rho \colon \A \to \X_M \) clearly descends to an anchor map \( \rho^{FD} \colon \A^{FD} \to \X_M \).

The only thing to check is that the Lie bracket on \( \A \) descends to a well-defined Lie bracket on \( \A^{FD} \). To prove this, it suffices to show that \( \ker \pi \) is an ideal of \( \A \).

In order to prove this, we first want to show that if \( \kappa \in \ker \pi_U \) then \( \kappa \in \m_x^2 \A(U) \) for all \( x \in U \). To see this note that \( \kappa \in \m_x \A(U) \) so let us write \( \kappa = u a \) for \( a \in \A(U) \) and \( u \in \m_x \). We can arrange so that \( u \) vanishes to first order at \( x \). Since the property we wish to prove is local, without loss of generality we can assume that \( u \) vanishes transversely. Therefore, let \( U' = U \setminus u^{-1}(0) \). Since \( u \) vanishes transversely, we conclude that \( U' \) is a dense open set and \( u \) is invertible over \( U' \). Therefore:
\[ (\kappa|_{U'})/u = a \]
Consequently, \( a \in \ker \pi_{U'} \). However, since \( U' \) is dense and open, Lemma~\ref{lemma:fd.topological.lrc} implies that \( a \in \ker \pi_{U} \) and in particular, \( a \in \m_x \A(U) \).

Now we will use this fact to prove that the bracket descends. We claim that for all \( x \in U \) and \( a \in \A(U) \):
\[ [a, \m_x^2\A(U)] \subset \m_x \A(U) \]
To see this, suppose \( u_1, u_2 \in \m_x \) and \( b \in \A(U) \). Then:
\[ [a, u_1 u_2 b ]  = u_1 u_2 [a, b] + \L_a (u_1 u_2) b = u_1 u_2 [a, b] + \L_a(u_1) u_2 b + u_2 \L_a(u_1) b \]
Since this expression is an element of \( \m_x \A(U) \). The claim holds. Since every element of \( \ker \pi \) is an element of \( \m_x^2 \A(A) \) for all \( x \in U \) it follows that \( [a, \ker \pi_U] \subset \ker \pi_U \).
\end{proof}
%
%%%%%%%%%%%%%%%%%%%%%%%%%%%%%%%
%%%%%%% Section: Adjoint integrability - integrability
%%%%%%%%%%%%%%%%%%%%%%%%%%%%%%
%
%
\subsection{Integrability}
We saw in the previous section that it makes sense to integrate and differentiate time-dependent sections of fiber determined modules. Let us now turn our attention to a special class of fiber determined modules for which a certain differential equation has a solution.
%
%%%%%%%%%%%%%%%%%%%%%%%%%%%%%%%
%%%%%%% Definition: Adjoint integrable
%%%%%%%%%%%%%%%%%%%%%%%%%%%%%%
%
%
\begin{definition}
Suppose \( \A  \) is a fiber determined Lie-Rinehart structure on \(M \). We say that \( \A \) is \textbf{adjoint integrable} if the following property holds: Suppose we are given the following data:
\begin{itemize}
\item An open set \( U \subset M \)
\item a smooth function \( \alpha \colon \BBR \to \A(U) \)
\item and an element \( b_0 \in \A(U) \)
\end{itemize}
such that the flow \( \phi^t_{\rho(\alpha)} \) of \( \rho(\alpha(t)) \) exits for all time. Then there exists a unique smooth function \( b \colon \BBR \to \A(U) \) such that:
\begin{equation}\label{eqn:adjoint.equation}
 b'(t) = [b(t), \alpha(t) ] \qquad b(0) = b_0
\end{equation}
We call Equation~(\ref{eqn:adjoint.equation}) the \textbf{adjoint flow equation} and \( b(t) \) is called the \textbf{adjoint flow of \( b_0 \) along \( \alpha \)}.
\end{definition}
%
%%%%%%%%%%%%%%%%%%%%%%%%%%%%%%%
%%%%%%% Example: Vector fields are adjoint integrable
%%%%%%%%%%%%%%%%%%%%%%%%%%%%%%
%
%
\begin{example}
Consider the tangent LR structure \( \X_M \) on a manifold \( M \). Given a time-dependent vector field \( X \colon \BBR \to \X_M(M) \) and a vector field \( Y \in \X_M(M) \) then the solution to the adjoint equation is the pushforward of \( Y \) along the flow of \( X \).
\end{example}
In the next proposition, we see that solutions to the adjoint flow equation give rise to LR isomorphisms.
%
%%%%%%%%%%%%%%%%%%%%%%%%%%%%%%%
%%%%%%% Theorem: Existence of adjoint flow
%%%%%%%%%%%%%%%%%%%%%%%%%%%%%%
%
%
\begin{theorem}\label{theorem:frobenius.int.adjoint.flow}
Suppose \( \A \) is a adjoint integrable LR structure on a manifold \( M \). Let \( \alpha \colon \BBR \to \A(M) \) be smooth such that the flow of \( X := \rho(\alpha) \) exists for all time. Then there exists a unique family of LR isomorphisms \( \Phi_\alpha^t \colon \A \to \A \) covering the flow of \( X \), \( \phi^t_X \colon M \to M \) such that for each \( b \in \A(M) \) we have that \( \Phi^t_\alpha(b) \) is the adjoint flow of \( b\) along \( \alpha \). 
\end{theorem}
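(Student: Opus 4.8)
The plan is to define $\Phi^t_\alpha$ first on global sections by the adjoint flow and then promote it to a sheaf isomorphism. I would set $\Phi^t_\alpha(b) := b(t)$, where $b(t)$ is the unique solution of the adjoint flow equation $b'(t) = [b(t),\alpha(t)]$ with $b(0)=b$, whose existence and uniqueness is exactly the adjoint integrability hypothesis applied to $\alpha$ (whose anchor flow is complete by assumption). Because the equation is $\R$-linear in the unknown, the uniqueness clause immediately gives that $\Phi^t_\alpha \colon \A(M) \to \A(M)$ is $\R$-linear, and uniqueness of the whole family $\{\Phi^t_\alpha\}$ is likewise automatic once one insists, as the theorem does, that $\Phi^t_\alpha(b)$ be the adjoint flow. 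Throughout, differentiating time-dependent sections and the Leibniz rules $\frac{d}{dt}(g B) = g'B + gB'$ and $\frac{d}{dt}[B,C] = [B',C]+[B,C']$ are justified by the calculus for fiber determined modules of Lemma~\ref{lemma:geometric.module.has.calculus}.

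The structural identities then all follow from one principle: two smooth families solving the same linear adjoint equation with the same initial value coincide, by uniqueness. For the bracket I would put $B(t)=\Phi^t_\alpha(b)$, $C(t)=\Phi^t_\alpha(c)$ and check, using the Jacobi identity, that $D(t):=[B(t),C(t)]$ satisfies $D'=[D,\alpha]$ with $D(0)=[b,c]$; uniqueness forces $[\Phi^t_\alpha(b),\Phi^t_\alpha(c)] = \Phi^t_\alpha([b,c])$, so $\Phi^t_\alpha$ is a Lie algebra homomorphism. For the module structure I would show that $g(t)B(t)$ solves the adjoint equation, where $g(t):= f\circ(\phi^t_X)^{-1}$ is the transport of $f\in\Cinf_M(M)$: the Leibniz rule of the LR bracket reduces the requirement to $g'(t) = -\L_{\alpha(t)}g(t)$, which is exactly the transport equation satisfied by $g$, and uniqueness yields $\Phi^t_\alpha(fb) = g(t)\,\Phi^t_\alpha(b)$. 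This is precisely the statement that $\Phi^t_\alpha$ covers $\phi^t_X$ at the level of functions. Applying $\rho$ to the adjoint equation and using that $\rho$ is a Lie algebra homomorphism commuting with $t$-differentiation shows $\rho(\Phi^t_\alpha(b))$ solves the classical Lie-derivative equation for vector fields, whose solution is the pushforward $(\phi^t_X)_*\rho(b)$; this gives anchor compatibility.

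Next I would globalize from $\A(M)$ to a morphism of sheaves covering $\phi^t_X$. The crux is locality: if $b\in\A(M)$ vanishes on an open $U$, then $\Phi^t_\alpha(b)$ vanishes on $\phi^t_X(U)$. This follows from the module identity: choosing a cutoff $\chi$ supported in $U$ with $\chi\equiv 1$ on some $U'\Subset U$, we have $\chi b = 0$, hence $(\chi\circ(\phi^t_X)^{-1})\,\Phi^t_\alpha(b)=\Phi^t_\alpha(\chi b)=0$, and since $\chi\circ(\phi^t_X)^{-1}\equiv 1$ on $\phi^t_X(U')$ we get $\Phi^t_\alpha(b)|_{\phi^t_X(U')}=0$; letting $U'$ exhaust $U$ gives the claim. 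Locality lets me define $\Phi^t_\alpha$ on any local section $c\in\A(V)$ by extending $\chi c$ by zero to a global section near each point and transporting, well-definedness being guaranteed by locality; compatibility with restrictions then makes $\Phi^t_\alpha$ a morphism of sheaves of modules, and the identities above hold over every open set, so $\Phi^t_\alpha$ is an LR morphism.

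Finally, for invertibility I would introduce the two-parameter adjoint flow $\Phi^{t,s}_\alpha$, obtained by solving the adjoint equation with initial value prescribed at time $s$ (using the time-translated data $\alpha(\cdot+s)$). Uniqueness yields the cocycle identities $\Phi^{t,s}_\alpha\circ\Phi^{s,r}_\alpha = \Phi^{t,r}_\alpha$ and $\Phi^{t,t}_\alpha=\Id$, so $\Phi^t_\alpha = \Phi^{t,0}_\alpha$ is invertible with inverse $\Phi^{0,t}_\alpha$, itself an LR morphism by the same arguments; hence $\Phi^t_\alpha$ is an LR isomorphism covering the diffeomorphism $\phi^t_X$. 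I expect the main obstacle to be not any single algebraic identity---these fall out uniformly from the ODE uniqueness principle---but the passage from global sections to a genuine sheaf isomorphism covering the flow, i.e.\ establishing locality and checking that the transported data glues. This is where the fiber determined hypothesis and the calculus of Lemma~\ref{lemma:geometric.module.has.calculus} do the real work.
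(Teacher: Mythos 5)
Your proposal is correct, and its core mechanism is identical to the paper's: every structural identity is reduced to the uniqueness clause in the definition of adjoint integrability. The only cosmetic difference on that front is that the paper forms defect functions, e.g.\ \( \eta(t) := \Phi^t_\alpha([b,c]) - [\Phi^t_\alpha(b),\Phi^t_\alpha(c)] \), and shows each solves the adjoint equation with zero initial value, whereas you verify directly that the candidate expressions \( [\Phi^t_\alpha(b),\Phi^t_\alpha(c)] \) and \( (f\circ(\phi^t_X)^{-1})\,\Phi^t_\alpha(b) \) solve it with the correct initial value; by linearity of the equation these are the same argument, and your reduction of the module identity to the transport equation \( g' = -\L_{\alpha(t)}g \) is exactly the computation the paper performs (the paper phrases it via the inverse \( \Psi^t_\alpha \) instead). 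You do, however, go beyond the paper in two places, and both additions are worthwhile. First, the paper's proof stops at an isomorphism of LR \emph{pairs} \( (\A(M),\Cinf_M(M)) \to (\A(M),\Cinf_M(M)) \) and never addresses the passage to a morphism of \emph{sheaves} covering \( \phi^t_X \), which the statement nominally asserts; your cutoff/locality argument (from \( \chi b = 0 \) deduce \( (\chi\circ(\phi^t_X)^{-1})\,\Phi^t_\alpha(b) = 0 \), then extend local sections by cutoffs) supplies exactly that missing globalization. Second, for invertibility the paper asserts that the inverse of \( \Phi^t_\alpha \) is the adjoint flow along \( -\alpha \); for time-dependent \( \alpha \) this is imprecise, since the operators \( [\,\cdot\,,\alpha(t)] \) need not commute for different \( t \), and the genuine inverse is the evolution family run backwards, i.e.\ the flow along \( s \mapsto -\alpha(t-s) \). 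Your two-parameter family \( \Phi^{t,s}_\alpha \) with the cocycle identity handles this correctly. One small point to make explicit in a final write-up: before invoking uniqueness you should check that your candidate families are smooth in the sense of Definition~\ref{definition:coefficient.diffeology} (e.g.\ that \( [\sum_i c^i e_i, \sum_j d^j f_j] \) is again a parameterized family, via the Leibniz rule), since that is what licenses the differentiation rules you quote from Lemma~\ref{lemma:geometric.module.has.calculus}.
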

\begin{proof}
The uniqueness assumption ensures that \( \Phi^t_\alpha \colon \A(M) \to \A(M) \) is well defined. It is also quite straightforward to check that \( \Phi^t_\alpha \) is a \( \BBR \)-linear isomorphism. In fact, the inverse of \( \Phi^t_\alpha \) is the function \( \Psi^t_\alpha \colon \A(M) \to \A(M) \) for which \( \Psi^t_\alpha(b) \) is the adjoint flow of \( b \) along \( - \alpha \).

To complete the proof, we need to show that 
\[ (\Psi^t_\alpha , (\phi^t_X)^* ) \colon (\A(M) , \Cinf_M(M) ) \to (\A(M) , \Cinf_M(M) \]
is an isomorphism of LR pairs. This amounts to proving that the following three equations hold:
\begin{enumerate}[(a)]
\item Compatibility with the anchor: 
\[ \forall t \in \BBR, \ \forall  b \in \A(M) \qquad  \rho( \Phi^t_\alpha (b)) = (\phi_X^t)_* (\rho(b)) \]
\item Compatibility with the bracket: 
\[ \forall t \in \BBR , \ \forall  b , c \in \A(M) \qquad  \Phi^t_\alpha ([b, c]) = [\Phi^t_\alpha (b) , \Phi^t_\alpha (c)] \]
\item Compatibility with the module structure: 
\[ \forall t \in \BBR, \ \forall  b \in \A(M), \ \forall u \in \Cinf_M(M) \qquad  \Psi_\alpha^t ( u b ) = \phi^t_\alpha (u) \Psi^t_\alpha (b) \]
\end{enumerate}
Note that the last condition was written in terms of \( \Psi \). This is due to the fact that the equation is easier to prove when written this way. For all three equations, we will show that the failure of each of these equations to hold are all trivial solutions to adjoint flow equations.

(a) Consider the following function:
\[ \zeta(t) := \rho( \Phi_\alpha^t(b)) - (\phi_X^t)_* \rho(b)  \]
Clearly condition (a) holds if and only if \( \zeta(t) = 0 \) for all \( t \).
Now let us compute the derivative:
\begin{align*}
\zeta'(t) &=  \frac{\dif}{\dif t} \left( \rho (\Phi_\alpha^t(b)) - (\phi_X^t)_* \rho(b) \right) \\
&= \rho ( [ \Phi^t_\alpha (b), \alpha(t)] ) - [ (\phi^t_X)_* \rho(b)  , X] \\
&= [ \Phi^t_\alpha (b) - (\phi^t_X))_* \rho(b) , \alpha(t)] \\
&= [ \zeta(t), \alpha(t)]
\end{align*}
Since \( \zeta(0) = 0 \), we conclude that \( \zeta(t) \) is the adjoint flow of zero along \( \alpha \).

(b) Let us fix \( b, c \in \A(M) \) and consider:
\[   \eta(t) := \Phi_\alpha^t ([b, c]) - [ \Phi^t_\alpha  (b) , \Phi^t_\alpha (c) ] \]
If we can show \( \eta(t) = 0 \) for all \( t \) then we have proved (b). Let us take the derivative:
\begin{align*}
\eta'(t) &= \frac{\dif}{\dif t} \left( \Phi_\alpha^t ([b, c]) - [ \Phi^t_\alpha  (b) , \Phi^t_\alpha (c) ] \right) \\ 
&= [ \Phi^t_\alpha ([b, c]) , \alpha(t) ] -  [ [\Phi^t_\alpha  (b), \alpha(t)] , \Phi^t_\alpha (c) ] - [ \Phi^t_\alpha  (b) , [\Phi^t_\alpha (c), \alpha(t) ]] \\
&= [ \Phi^t_\alpha ([b, c]) , \alpha(t) ]-  [ [\Phi^t_\alpha(b) , \Phi^t_\alpha (c)], \alpha(t) ] \\
&= [ \eta(t), \alpha(t)]
\end{align*}
So \( \eta(t) \) is the adjoint flow of \( 0 \) along \( \alpha \).

(c) Now suppose \( b \in \A(M) \) and \( u \in \Cinf_M (M) \). 
Let:
\[ \delta(t) := \Psi^t_\alpha (  u  b ) - (\phi^t_X)^* (u) \Psi^t_\alpha (b) \]
If we compute the derivative of this, we get that:
\begin{align*} 
\delta'(t) &= \frac{\dif }{\dif t} \left( \Psi^t_\alpha (  u  b ) - (\phi^t_X)^* u \Psi^t_\alpha (b) \right) \\
&= [\Psi^t_\alpha (  u  b ), - \alpha(t) ] - (\phi^t_X)^* (\L_X u) \Psi^t_\alpha (b) -(\phi^t_X)^* (u) [\Psi^t_\alpha (b), -\alpha]\\
&= [\Psi^t_\alpha (  u  b ), - \alpha(t) ] - (\phi^t_X)^* (\L_X u) \Psi^t_\alpha (b) -[ (\phi^t_X)^* (u) \Psi^t_\alpha (b), -\alpha] + \L_X (\phi^t_X)^* (u) b \\
&= [\Psi^t_\alpha (  u  b ), - \alpha(t) ]  -[ (\phi^t_X)^* (u) \Psi^t_\alpha (b), -\alpha]\\
&= [ \delta(t) , -\alpha(t)] 
\end{align*}
So \( \delta \) is the adjoint flow of \( 0 \) along \( -\alpha \).
\end{proof}
Since morphisms of modules give rise to maps at the level of fibers, the adjoint flow of a time-dependent section \( \alpha \) also defines an isomorphism of vector spaces \( \Fib(\A)_p \to \Fib(\A)_{\phi^t_X(p)}\) for all \( p \in M \).
%
%%%%%%%%%%%%%%%%%%%%%%%%%%%%%%%
%%%%%%% Theorem: Frobenius theorem for adjoint integrable LR structures
%%%%%%%%%%%%%%%%%%%%%%%%%%%%%%
%
%
\begin{theorem}[Stefan-Sussman-Frobenius Theorem for Lie-Rinehart Structures]\label{thm:frob.int.foliation}
Suppose \( \A \) is an adjoint integrable Lie-Rinehart structure on \( M \). Consider the distribution \( \mathcal{D}_\A:= \{ \rho(a)_x \ | \ a \in \A(M) , \ x \in M  \} \). Then there is a partition of \( M \) into maximal connected integral submanifolds of \( \mathcal{D}_\A \).
\end{theorem}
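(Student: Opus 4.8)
The plan is to realize the leaves as the \emph{orbits} of the family of vector fields $\mathcal{F} := \rho(\A)$ on $M$ and to invoke the classical orbit theorem of Stefan and Sussmann, after checking that the distribution $\mathcal{D}_\A$ is exactly the tangent distribution of these orbits. The one genuinely new ingredient---flow-invariance of $\mathcal{D}_\A$---is supplied by Theorem~\ref{theorem:frobenius.int.adjoint.flow}.

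First I would make the family of vector fields and the distribution precise. For each open $U$ and $a \in \A(U)$, the anchor yields a smooth local vector field $\rho(a) \in \X_M(U)$, and by definition $\mathcal{D}_{\A,x} = \mathrm{span}\{\rho(a)_x\}$. Using bump functions together with the sheaf gluing property, I would record at the outset that it is harmless to generate $\mathcal{D}_\A$ by global or by local sections: if $a \in \A(U)$ and $\chi \in \Cinf_M(M)$ is supported in $U$ with $\chi(x) \neq 0$, then $\chi a$ extends by zero to a global section with $\rho(\chi a)_x = \chi(x)\rho(a)_x$, so the pointwise span is unaffected. The same cutoff trick replaces any $\rho(a)$ by a compactly supported, hence complete, vector field near a given point, which lets me reduce every flow argument to the setting where Theorem~\ref{theorem:frobenius.int.adjoint.flow} directly applies.

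The key step is then to show that $\mathcal{D}_\A$ is invariant under the local flows of every $\rho(a)$. Given $a \in \A(U)$ and a point $x_0$, I would cut off $a$ to obtain a complete field $X = \rho(\chi a)$, apply Theorem~\ref{theorem:frobenius.int.adjoint.flow} to the constant path $\alpha(t) = \chi a$, and extract the LR automorphisms $\Phi^t_\alpha \colon \A \to \A$ covering $\phi^t_X$. The anchor-compatibility clause $\rho(\Phi^t_\alpha(b)) = (\phi^t_X)_* \rho(b)$ shows that $\difnp{}{x_0}\phi^t_X\bigl(\rho(b)_{x_0}\bigr) = \rho(\Phi^t_\alpha(b))_{\phi^t_X(x_0)} \in \mathcal{D}_{\A, \phi^t_X(x_0)}$; since $\Phi^t_\alpha$ is a linear isomorphism of sections, $b \mapsto \Phi^t_\alpha(b)$ sweeps out a spanning set of $\mathcal{D}_{\A,\phi^t_X(x_0)}$, giving the pointwise identity $\difnp{}{x_0}\phi^t_X\bigl(\mathcal{D}_{\A,x_0}\bigr) = \mathcal{D}_{\A,\phi^t_X(x_0)}$. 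Thus each such flow carries $\mathcal{D}_\A$ to itself, and hence so does any composition of them.

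Finally I would appeal to the orbit theorem applied to $\mathcal{F} = \rho(\A)$: the orbits partition $M$ into connected immersed submanifolds whose common tangent distribution $P$ is the smallest $\mathcal{F}$-invariant distribution containing $\mathcal{F}$. Since $\mathcal{F} \subseteq \mathcal{D}_\A$ and $\mathcal{D}_\A$ is $\mathcal{F}$-invariant by the previous paragraph, minimality gives $P \subseteq \mathcal{D}_\A$; and since $\mathcal{D}_{\A,x}$ is spanned by $\mathcal{F} \subseteq P$, we get $\mathcal{D}_\A \subseteq P$, whence $\mathcal{D}_\A = P$ and the orbits are precisely the maximal connected integral submanifolds of $\mathcal{D}_\A$. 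I expect the main obstacle to be the flow-invariance step: specifically, handling completeness of the generating fields via the cutoff reduction, and verifying cleanly that the section-level isomorphism $\Phi^t_\alpha$ of Theorem~\ref{theorem:frobenius.int.adjoint.flow}---which lives in a fiber-determined module and is only a priori defined on sections---descends to the asserted pointwise statement on $\mathcal{D}_\A$.
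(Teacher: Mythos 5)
Your proposal is correct and follows essentially the same route as the paper: set \( \F = \rho(\A) \), invoke the Stefan--Sussmann theorem, and obtain flow-invariance of \( \D_\A \) from the LR automorphisms \( \Phi^t_\alpha \) of Theorem~\ref{theorem:frobenius.int.adjoint.flow} (the paper identifies \( (\phi^t_X)_* \rho(b) \) with \( \rho(\Phi^t_\alpha(b)) \) via uniqueness of the adjoint flow in \( \X_M \), which is precisely your anchor-compatibility clause). Your explicit cutoff reduction to compactly supported, hence complete, generating fields is a sound refinement of a completeness point that the paper's proof leaves implicit.
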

\begin{proof}
Let \( \F = \rho(\A) \). It is clear that \( \F(M) \subset \X(M) \) is a Lie subalgebra which generates \( \D \). 

According to the Stefan-Sussman theorem\cite{Sussmann}\cite{Stefan}, it suffices to show that \( \D \) is preserved under the flows of vector fields in \( \F(M) \). Suppose \( X, Y  \in \F(M) \), then there exist preimages \( \alpha, \beta \in \A(M) \). By the adjoint integrability assumption, we have a smooth family of LR morphisms \( \Phi^t_\alpha \colon \A \to \A \) which cover the flow of \( X \). 

We also know that \( \rho \circ \Phi^t_\alpha (\beta) \) must be a unique solution to adjoint equation on \( \X(M) \) with initial condition \( Y \). However, we already know that the solution to the adjoint equation is just the pushforward by the flow of \( X \). Therefore, \( (\phi_X^t)_*Y = \rho (\Phi^t_\alpha \beta) \in \F(M) \). Since \( Y \) is arbitrary, this shows that \( (\phi^t_X)_* \F(M) \subset \F(M) \) and therefore \( \dif \phi^t_X (\D) \subset \D \).
\end{proof}
%
%%%%%%%%%%%%%%%%%%%%%%%%%%%%%%%
%%%%%%% Section: Adjoint integrability - Examples
%%%%%%%%%%%%%%%%%%%%%%%%%%%%%%
%
%
\subsection{Examples}
We will now prove some results which will hopefully convince the reader that there are many interesting adjoint integrable LR structures. We will begin by showing that finitely generated and fiber determined LR structures are automatically adjoint integrable.
%
%%%%%%%%%%%%%%%%%%%%%%%%%%%%%%%
%%%%%%% Theorem: Fiber determined and locally finitely generated implies adjoint integrable
%%%%%%%%%%%%%%%%%%%%%%%%%%%%%%
%
%
\begin{theorem}\label{thm:fin.gen.frob.int}
Suppose \( \A \) is a fiber determined LR structure on \( M \) and \( \A \) is locally finitely generated. Then \( \A \) is adjoint integrable.
\end{theorem}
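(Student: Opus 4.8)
The plan is to reduce the statement to a purely local computation on an open set where $\A$ is finitely generated, translate the adjoint flow equation into a linear first-order transport equation for coefficient functions, solve that by the method of characteristics, and then invoke fiber-determinedness to establish uniqueness and to patch the local solutions. First I would work on an open $V$ on which $\A$ is generated by finitely many sections $e_1,\dots,e_n$. Writing $b(t)=\sum_j b^j(t)e_j$ and $\alpha(t)=\sum_k c^k(t)e_k$, introducing structure functions $[e_i,e_k]=\sum_j C^j_{ik}e_j$, and expanding $[b(t),\alpha(t)]$ with the Leibniz rule of Definition~\ref{definition:lie.rinehart.structure}, the equation $b'(t)=[b(t),\alpha(t)]$ (with $b'(t)=\sum_j(b^j)'(t)e_j$ interpreted via Lemma~\ref{lemma:geometric.module.has.calculus}) becomes, upon matching coefficients of $e_j$, the transport system
\[ \partial_t b^j + X(b^j) = \sum_i M^j_i\,b^i, \qquad X:=\rho(\alpha), \quad M^j_i:=\sum_k C^j_{ik}c^k + \L_{e_i}(c^j). \]
Since by hypothesis the flow $\phi^t_X$ exists for all time, the method of characteristics applies: along each integral curve the system collapses to a linear ODE with globally defined solutions, and smooth dependence on initial data produces a smooth coefficient solution, hence a smooth $b(t)=\sum_j b^j(t)e_j\in\A(V)$ solving the equation. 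This gives local existence.

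The heart of the proof is uniqueness, and this is exactly where fiber-determinedness is indispensable. The coefficients $b^j$ are not unique, so a solution of the $\A$-valued equation need not lift to a solution of the transport system; the discrepancy lands in the module $K$ of relations among the $e_j$. Because $\A$ is fiber determined, $K$ is precisely the module of coefficient tuples vanishing in every fiber, i.e.\ $\mathbf{k}\in K$ if and only if $\mathbf{k}(y)\in N_y:=\ker(\BBR^n\to\Fib(\A)_y)$ for all $y$, where the map is evaluation against the $e_j$. The decisive claim is then that the characteristic propagator preserves these fiber-relation spaces, carrying $N_{x_0}$ into $N_{\phi^t_X(x_0)}$ along each characteristic. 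Granting this, a relation remains a relation, so the constructed solution is independent of the chosen lift, and any solution with vanishing initial value has all fibers zero along every characteristic and therefore vanishes by Definition~\ref{definition:fiber.determined}; uniqueness follows, and with it the gluing of local solutions.

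To prove that the propagator preserves fiber-relations I expect to use a module-multiplicativity of the constructed flow, in the spirit of part (c) of Theorem~\ref{theorem:frobenius.int.adjoint.flow}: the explicit characteristic solution intertwines multiplication by a function $u$ with the pullback $u\circ\phi^{-t}_X$. Given $\lambda_0\in N_{x_0}$, the section $s_0:=\sum_i \lambda_0^i e_i$ has vanishing fiber at $x_0$, so near $x_0$ it may be written $\sum_a u_a v_a$ with $u_a(x_0)=0$; globalizing with a bump function and applying multiplicativity, the fiber of the evolved section at $\phi^t_X(x_0)$ becomes $\sum_a u_a(x_0)(\cdots)=0$, which is exactly the claimed invariance. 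Note that this step produces the fiberwise statement without fiber-determinedness; it is the upgrade from fiberwise vanishing to vanishing of actual sections that requires $\A$ to be fiber determined.

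The main obstacle I anticipate is precisely this interplay between the non-uniqueness of coefficients and the transport dynamics, whose control rests entirely on fiber-determinedness (through the identification of $K$ with fiberwise relations and through Lemma~\ref{lemma:fd.topological.lrc}), rather than on any finite-rank structure; indeed the fiber dimension of $\A$ may jump, so a naive fiberwise ODE argument fails and the module-multiplicativity detour seems unavoidable. A secondary technical point is the passage from the finitely generated chart $V$ to an arbitrary open $U$: since characteristics may leave $V$, I would first build solutions on short time intervals over small balls on which the relevant characteristics remain inside a single chart, then glue these by the local uniqueness just established and extend in time using completeness of $\phi^t_X$.
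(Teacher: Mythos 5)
Your existence argument is, in substance, the paper's: the transport system $\partial_t b^j + X(b^j) = \sum_i M^j_i b^i$ is exactly the flow equation of the time-dependent derivation $D^t$ with symbol $\rho(\alpha)$ that the paper constructs on the free module $\Cinf_M(U)^k$, and solving it along characteristics is the standard construction of the family of module isomorphisms $\til\Phi^t$. The divergence, and the gap, is in uniqueness. Your decisive claim --- that the characteristic propagator carries $N_{x_0}$ into $N_{\phi^t_X(x_0)}$ --- is not established by the bump-function/multiplicativity computation you sketch. Multiplicativity of $\til\Phi^t$ over the flow shows only that a coefficient tuple vanishing \emph{at the point} $x_0$ evolves to a tuple vanishing at $\phi^t_X(x_0)$; that is precisely the well-definedness of the fiberwise propagator $P^t$, nothing more. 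For a nonzero $\lambda_0 \in N_{x_0}$, the vector $P^t(\lambda_0)$ is the evolution of the constant tuple $\hat\lambda_0$, whereas your decomposition $s_0 = \sum_a u_a v_a$ with $u_a(x_0)=0$ produces a \emph{different} lift $w$ of the same section $s_0$, with $w(x_0)=0 \neq \lambda_0$. Your computation evolves $w$ and correctly finds fiber zero at $\phi^t_X(x_0)$; to transfer this conclusion to $\hat\lambda_0$ you need $\til\Phi^t(\hat\lambda_0 - w) \in \ker\pi$, i.e.\ that the propagator preserves the relation module $K=\ker\pi$. But under your own identification of $K$ with the fiberwise relations, the statement $P^t(N_{x_0})\subset N_{\phi^t_X(x_0)}$ is \emph{equivalent} to $\til\Phi^t(K)\subset K$, which is in turn equivalent to the lift-independence/uniqueness you are trying to prove. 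So the decisive step is circular as written, and since (as you correctly note) the fiber dimension can jump, no naive fiberwise ODE argument will close it.

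The paper's uniqueness proof avoids fiberwise propagation altogether and is worth contrasting. Given an arbitrary solution $b(t)$ with $b(0)=\pi(u)$, it uses local finite generation to lift $b$ to a smooth path $\til b$ in $\Cinf_M(U)^k$ (possible by expressing $b(t)$ through a parameterization and lifting the finitely many module elements), forms $\delta(t) := \til\Phi^t(u) - \til b(t)$, argues that $\delta'(t) \in \ker\pi$ and $\delta(0)\in\ker\pi$, and then invokes the fact that $\ker\pi$ is \emph{closed} in the coefficient topology --- this is where fiber-determinedness enters, since $\ker\pi$ consists exactly of the tuples vanishing in every fiber, and the calculus of Lemma~\ref{lemma:geometric.module.has.calculus} together with the topological input behind Lemma~\ref{lemma:fd.topological.lrc} lets one integrate a path lying in a closed submodule without leaving it --- to conclude $\delta(t)\in\ker\pi$ for all $t$, i.e.\ $b(t)=\pi(\til\Phi^t(u))$. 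If you want to repair your write-up, the shortest route is to replace the relation-propagation claim with this closedness argument in the free module. A final minor point: your concern about characteristics escaping the finitely generated chart does not arise in the paper's setting, because the definition of adjoint integrability supplies completeness of $\rho(\alpha)$ on the given open set, and the reduction to charts is handled by local uniqueness plus gluing, just as you propose.
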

\begin{proof}
Suppose \( \A \) is adjoint integrable. Partitions of unity allow us to conclude that it suffices to show the existence of solutions to the adjoint flow equation in a neighborhood of each point. Suppose \( x \in M \) and \( \A/ \m_x \A \) has rank \( k \). Let \( U \) be an open neighborhood such that \( \A(U) = < e_1 , \ldots e_k > \).
Now suppose \( \alpha \colon \BBR \to \A(U) \) with \( X := \rho(\alpha) \) complete on \( U \). For each \( e_i \) choose functions \( \{c^m_i(t) \} \subset \Cinf_M(U) \) such that:
\[ [\alpha,e_i ] = \sum_{i=1}^k c_i^m e_m \]
We can use these coefficients to define a time-dependent derivation \( D^t \colon \Cinf_M(U)^k \to \Cinf_M(U)^k \), which is uniquely determined by the property:
\[  \forall  1 \le i \le k \qquad D^t(\overbrace{0 , \ldots , 0}^{i-1}, 1, 0, \ldots , 0) := (c_i^1(t), \ldots , c_i^k(t)) \]
Since the anchor map is a Lie algebra homomorphism the symbol of \( D_a \) is \( \rho(\alpha) \).

If the flow of \(\rho(\alpha) \) exists for all time, so the usual theory of derivations on vector bundles tells us that there exists a family of module isomorphisms \( \til\Phi^t \colon \Cinf_M(U)^k \to \Cinf_M(U)^k \) covering the flow of \( X\) such that:
\begin{equation}\label{eqn:adjointeqnlift}
 \forall u \in \Cinf_M(U)^k \qquad \frac{\dif}{\dif t} \til\Phi^t (u) =  - D^t ( \til\Phi^t(u)  )
 \end{equation}
Now consider the projection \( \pi \colon \Cinf_M(U)^k \to \A(M) \). Since for all \( u \in \Cinf_M(U)^k \) we have that \( D^t (u) = [ \pi(u), \alpha] \) it follows that \( \pi \circ \til \Phi^t(u) \colon \BBR \to \A(U) \) is the adjoint flow of \( \pi(u) \) along \( \alpha \).

This proves existence of solutions, now we need to show uniqueness. Suppose \( u \in \Cinf_M(U)^k \) and \( b_0 = \pi(u) \). Assume that \( b(t) \) is an arbitrary solution to the adjoint flow equation with initial condition \( b(0) = b_0 \). Since \( b(t) \) is smooth and \( \pi \) is surjective, by possibly shrinking \(U \) we can construct a lift \( \til b \colon \BBR \to \Cinf_M(U)^k \) such that \( \pi \circ \til b = b \).

Let us look at the difference \( \delta(t) := \til \Phi^t(u) - \til b(t) \). If we compute the derivative, we see that \( \pi( \delta'(t)) = 0 \). In other words, \( \delta'(t) \in \ker \pi \) for all \( t \). We also have that \( \delta(0) \in \ker \pi \). Since we have assumed that \( \A \) is complete, \( \ker \pi \) is closed and therefore \( \delta(t) \in \ker \pi \) for all \( t \). This proves that \( b(t) = \pi (\til \Phi^t(u)) \).
\end{proof}
%
%%%%%%%%%%%%%%%%%%%%%%%%%%%%%%%
%%%%%%% Example: Lie algebroids are adjoint integrable
%%%%%%%%%%%%%%%%%%%%%%%%%%%%%%
%
%
\begin{example}\label{example:lie.alg.frob.int}
Suppose \( A \to M \) is a Lie algebroid. Then \( \Gamma_A  \) is fiber determined and locally finitely generated so it is therefore adjoint integrable. The solution to the adjoint equation for Lie algebroid already appears in~\cite{CrFeLie} where it is called the `flow of a section'.
\end{example}
%
%%%%%%%%%%%%%%%%%%%%%%%%%%%%%%%
%%%%%%% Example: Image of adjoint integrable is adjoint integrable
%%%%%%%%%%%%%%%%%%%%%%%%%%%%%%
%
%
\begin{example}\label{example:image.frob.int}
Suppose \( \A  \) and \( \B \) are adjoint integrable LR structures on \( M \) and \( F \colon \A \to \B \) is an LR morphism covering the identity. Then the image sheaf \( F(\A) \) is a adjoint integrable LR structure.

To see this, first note that \( F(\A) \) is fiber determined since it is a submodule of a fiber determined module. Furthermore, since the kernel of \( F \) will be an ideal in \( \A \), compatibility with the Lie bracket and anchor map ensures that \( \Im(\F) \) is a LR substructure.
\end{example}
Singular foliations are a particularly interesting class of examples. Since every submodule of the sheaf of vector fields is automatically fiber determined, checking adjoint integrability is a little simplified.
%
%%%%%%%%%%%%%%%%%%%%%%%%%%%%%%%
%%%%%%% Example: Finitely generated foliations are adjoint integrable
%%%%%%%%%%%%%%%%%%%%%%%%%%%%%%
%
%
\begin{example}\label{example:fin.gen.foliation.frob.int}
Suppose \( \F   \) is a locally finitely generated submodule of \( \X_M \). Then \( \F \) is adjoint integrable.
\end{example}
%
%%%%%%%%%%%%%%%%%%%%%%%%%%%%%%%
%%%%%%% Example: Image of anchor map is adjoint integrable
%%%%%%%%%%%%%%%%%%%%%%%%%%%%%%
%
%
\begin{example}\label{example:anchor.im.frob.int}
Suppose \( \A \) is a adjoint integrable LR structure on \( M \). Then the image sheaf of the anchor map \( \F := \rho(\A) \) is a adjoint integrable singular foliation.
\end{example}
We will conclude this section with a low-tech statement of the adjoint integrability condition for singular foliations. 
\begin{example}\label{example:frob.int.foliation.condition}
Suppose \( \F  \) is a submodule of the vector fields \( \X_M \) which is closed under the Lie bracket. Then \( \F \) constitutes a adjoint integrable LR structure if and only if for any locally finite collection of functions \( \{ c^i (t) \}_i \subset \Cinf_M(M) \), elements \( X_i \in \F(M) \) and \( Y \in \F(M) \), there exists a locally finite collection \( \{ u^j(t) \}_j \in \Cinf_M(M) \) and elements \( \{ Y_j \}_j \subset \F(M) \) such that: 
\[ (\phi^t_X)_* Y = \sum_j u^j(t) Y_j \quad \text{ where } \quad X(t) := \sum_i c^i(t) X_i \]
\end{example}

\section{Lie-Rinehart homotopy}\label{section:lie.rinehart.homotopy}
In this section we will develop the first steps of a theory of homotopy groups for Lie-Rinehart structures. 
Our main theorem will show that the notion of a homotopy group is well defined for any Lie-Rinehart structure on a ringed space, so long as the ground field is taken to be \( \BBR \). 
To that end, in this section \( ( X, \O_X ) \) and \( (Y, \O_Y) \) will denote a locally ringed space over \( \BBR \). \( \A \) and \( \B \) will denote LR structures on \( X \) and \( Y \), respectively.

\subsection{Homotopy}
In this section we will give a general definition of homotopy which is well-defined for any LR structure. However, since the boundary conditions are stated in terms of the fibers, it is likely that the definition is most relevant for fiber determined LR structures.
%%%
% DEFN: A-HOMOTOPY
%%%
\begin{definition}\label{defn:LR.homotopy}
Suppose \( \A  \) and \( \B  \) are LR structures over \( X \) and \( Y \), respectively.
Let \(F_0\) and \(F_1 \colon \A \to \B\) be LR morphisms of LR structures with base maps  \(f_0\) and \(f_1 \colon X \to Y\) respectively. A \textbf{LR homotopy} from \(F_0\) to \(F_1\) is a morphism:
\[
(H,h) \colon \A \times \X_{[0,1]} \to \B 
\]
such that for \( t = 0,1 \), we have that \((F_t, f_0) = (H,h) \circ (J_t, j_t) \).
\[\begin{tikzcd}
\A \arrow[r, "J_t"] \arrow[rr, bend left, "{F_t}"]  & \A \times \X_{[0,1]} \arrow[r, "H"] & \B  
\end{tikzcd}\]
Here \( J_t \colon \A \to \A \times \X_{[0,1]}  \) is the canonical inclusion covering \( p \mapsto (t, p) \).

If \( S \subset X \), we say that \( H \) is an \(S\)-preserving homotopy if for all \( x \in S \) and \( t \in [0,1] \) we have that \( \Fib(H(0, \frac{\partial }{\partial t}))_{(x, t)} = 0 \) and \( h(x, t) \) does not depend on \( t \).
\end{definition}
Note that our definition of homotopy means that \( h \) is a classical homotopy from \( f_0 \) to \( f_1 \) where \(X \) and \( Y \) are thought of as topological spaces. 
In the case that \( X \) and \( Y \) are smooth manifolds, and the structure sheaves are the smooth functions, the condition that \( h(x, t) \) is independent of \( t \) is redundant. 
Now let us see a few examples that will give us some tools for constructing homotopies.
\begin{example}[Constant homotopy]\label{example:constant.homotopy}
Suppose \(F \colon \A \to \B \) is a LR morphism covering \( f \colon X \to Y \). Let \( h \colon X \times [0,1] \to Y \) be the constant family of smooth maps defined by \( h(p, t) = f(p) \). 
Then the constant homotopy is defined as follows:
\[ H \left( 0 , \frac{\partial}{\partial t} \right) = 0 \qquad H(b , 0 ) = \pi_1^* F(b) \in \pi_1^* f^* \B \cong h^* \B \]
\end{example}
\begin{example}[Reverse Homotopy]\label{example:reverse.homotopy}
Suppose \( H \colon \B \times \X_{[0,1]} \to \A \) is an LR homotopy. 
Let \( r \colon [0,1] \to [0,1] \) be  \( r(t) = 1-t \). 
Then one can construct an isomorphism:
\[ R := \pi_1 \times (\dif r \circ \pi_2) \colon \A \times \X_{[0,1]} \to \A \times \X_{[0,1]}  \]
If we do a composition 
\[ \overline{H} := H \circ R \colon \A \times \X_{[0,1]} \to \B \] 
we get an LR homotopy which we call the \emph{reverse} of \( H \).
When \( H \) is an \( S \)-preserving-homotopy the reverse is also an \( S \)-preserving homotopy.
\end{example}
The next three examples show that the LR homotopy relationship is transitive.
\begin{example}[Reparameterization]\label{example:reparameterization}
Suppose \( H = \colon \A \times TI \to \B \) is an LR homotopy and \( \tau \colon [0,1] \to [0,1] \) is an endpoint preserving diffeomorphism. Then we can define a new LR homotopy:
\[ H^\tau := H \circ (\pi_1 \times (\dif \tau \circ \pi_2)) \colon \A \times \X_{[0,1]} \to \B  \]
which is called the reparameterization of \( H \) by \( \tau \).
\end{example}
\begin{example}[Lazy homotopies]\label{example:lazy.homotopy}
An LR homotopy \(H  \colon \B \times \X ([0,1]) \to \A \) covering \( h \colon N \times [0,1] \to M \) is called lazy if \( H \) is a constant homotopy in an open neighborhood of \( N \times \{ 0 , 1 \} \).
For an arbitrary \( H \), if \( \tau \colon [0,1] \to [0,1] \) is a reparameterization which is constant in a neighborhood of the boundary then  \( H^\tau \) is lazy.
\end{example}
\begin{example}[Concatenation]\label{example:concatenation}
Suppose \( H \colon \B \times \X_{[0,1]} \to \A \) and \( H' \colon \B \times \X_{[1,2]} \to \A \) where \( H  \) is a homotopy from \( F_0 \) to \( F_1 \) and \( H' \) is a homotopy from \( F_1 \) to \( F_2 \). We can define an LR morphism \( H' \cdot H \colon \A \times \X_{[0,2]} \to \B \) by defining it on an open cover:
\[ H' \cdot H |_{X \times [0,1)} = H' \qquad H \cdot H |_{X \times (1,2]} = H'_{+1}  \]
\[ H' \cdot H |_{X \times (1-\epsilon , 1 + \epsilon)} = F_1 \circ \pi_1 \]
In the above, \( H'_{+1} \) is just the translation of \( H' \) so that it is defined on the domain \( X \times [1,2] \) and \( \epsilon \) is a positive real such that \( H \) and \(H' \) are constant in an \( \epsilon\)-neighborhood of the boundary.

Technically, this is not a homotopy due to the fact that it is defined on the domain \( [0,2] \). Therefore, we define the \emph{concatenation} of \( H \) and \( H' \) to be:
\[ H' \odot H  := (H' \cdot H) \circ (\pi_1 \times (\dif \rho \circ \pi_2)) \colon \A \times \SC \to \B \]
where \( \rho \colon [0,1] \to [0,2] \) is the function \( \rho(t) = 2t \).
\end{example}
\section{Homotopy groups and groupoids}\label{section:homotopy.groups}
The homotopy groups are defined to be equivalence classes of morphisms from \( n \)-dimensional cubes \( \X_{[0,1]^n} \). 
We use cubes due to the fact that they have a convenient parameterization and the sheaf of vector fields on an \( n \)-dimensional cube is fairly easy to work with. 
Points in \( [0,1]^n \) will typically be denoted by \( \hat t = (t^1, \ldots , t^n) \) and the corresponding generators of \( \X_{[0,1]^n} \) are written \( \frac{\partial}{\partial{t^i}} \) for \( i=1, \ldots , n \).
\begin{definition}\label{definition:ncube}
Suppose \(\A\) is an LR structure on \( X \). \textbf{A Lie-Rinehart \(n \)-cube} is an LR morphism 
\[
\begin{tikzcd}
\X_{[0,1]^n} \arrow[d, squiggly] \arrow[r, "G"] & \A \arrow[d, squiggly] \\
{[0,1]^n} \arrow[r, "\gamma"] & X
\end{tikzcd}
\]
The \textbf{base-point} of an LR cube is defined to be \( \gamma(0, \ldots , 0) \in X \).
\end{definition}
\begin{example}
A \textbf{Lie-Rinehart path} is a 1-d LR cube \( G \colon \X_{[0,1]} \to \A \). The base point of an LR path is the starting point of the path and if two LR paths are related by a \(\{0,1\}\)-preserving homotopy then the associated underlying paths in \(X \) are homotopic.
\end{example}
\begin{example}
An LR \(n\)-cube is a \textbf{Lie-Rinehart \(n\)-sphere} if for all \( i = 1, \ldots , n \) we have that \( \Fib(G(\frac{\partial }{\partial{t^i}}))_{\hat t} = 0 \) for \( \hat t \) in the set \(  \{ t^i = 0,1 \} \subset \partial [0,1]^n \) and \( \gamma|_{\partial [0,1]^n} = \gamma(0, \ldots , 0 ) \).
\end{example}
\begin{definition}
Let \( \A  \) be a LR structure on \(X \) and \( x \in X \) be a point. The \textbf{\(n\)th homotopy group} of \( \A \) at \( x \) is defined to be:
\[ \pi_n(\A, x) := \frac{ \{ \text{n-spheres with base-point }x \}}{\partial [0,1]^n\text{-preserving homotopy} }\]
The \textbf{fundamental groupoid} of \( \A \) is defined to be:
\[  \Pi(\A) := \frac{ \{\text{LR-paths} \}}{\partial [0,1]\text{-preserving homotopy}}\]
\end{definition}
\begin{example}
If \( M \) is a manifold and \( \A \) is the sheaf of sections of a Lie algebroid over \( M \), then \( \Pi(\A) \) is the so-called \emph{Weinstein groupoid} of the Lie algebroid. In particular, if the algebroid is integrable then \( \Pi(\A) \) is a smooth manifold and constitutes the canonical source simply-connected integration.
\end{example}
%\begin{example}
%{}[[More concrete example? An infinitely generated singular foliation]]
%\end{example}
To define the product on these sets, we use the notion of reparameterization and the concatenation operation which we defined earlier: 

Suppose we are given two \(n \)-dimensional LR cubes \( G_1 \) and \( G_2 \). By viewing the last coordinate on \([0,1] \) as a deformation parameter, we can think of \( G_1 \) and \( G_2 \) as homotopies between \(n-1\)-dimensional LR cubes (with free boundary conditions). If \( G_1 \) is a homotopy from \( C_1\) to \( C_2 \) and \( G_2 \) is a homotopy from \(C_3 \to C_4 \) then we define:
\[ G_2 * G_1 := G_1^\tau \odot G_2^\tau \]
Where \( \tau  \colon [0,1] \) is a reparameterization of the interval which is constant in the neighborhood of the boundary.

\begin{theorem}\label{theorem:lie.rinehart.groups.well.defined}
Suppose \( \A \) is an LR structure over a locally ringed space \( X \) over \( \BBR \). Then the product operation defined above descends to a groupoid structure on \( \Pi(\A) \grpd X \) and a group structure on \( \pi_n(\A, x) \) for all natural numbers \( n \) and \( x \in X \).
\end{theorem}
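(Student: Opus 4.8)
The plan is to verify the groupoid and group axioms in the usual way of homotopy theory, by exhibiting for each axiom an explicit LR homotopy that witnesses it. The observation that makes this tractable is that every homotopy we need is \emph{geometric}: it is obtained by precomposing a fixed cube $G \colon \SCn{n} \to \A$ with an LR morphism $\dif K \colon \SCn{n+1} \to \SCn{n}$ induced by a smooth map of cubes $K \colon [0,1]^{n+1} \to [0,1]^n$. Since the differential of a smooth map is always a morphism of the corresponding tangent LR structures, each such $\dif K$ is an LR morphism, and hence $G \circ \dif K$ is again an LR cube by composition in $\LieRS$. The key lemma to isolate first is therefore: if $K_0, K_1 \colon [0,1]^n \to [0,1]^n$ are smoothly homotopic relative to the relevant boundary faces, then $G \circ \dif K_0$ and $G \circ \dif K_1$ are LR homotopic, the homotopy being $G \circ \dif \til K$ for the smooth homotopy $\til K \colon [0,1]^{n+1}\to[0,1]^n$ of cube maps. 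This reduces every axiom to a purely classical statement about smooth maps of cubes.

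Next I would show that the product descends to homotopy classes. Given homotopies between representatives, one combines them by applying the reparameterization and concatenation operations of Examples~\ref{example:reparameterization} and \ref{example:concatenation} in the direction transverse to the deformation parameter; because these operations are all induced by smooth maps of cubes, they vary smoothly in the deformation parameter and assemble into a single LR homotopy of products. Here the laziness device of Example~\ref{example:lazy.homotopy} is essential: one must first reparameterize all representatives to be constant near the gluing seam so that the concatenation $\odot$ is a well-defined (smooth across the seam) LR morphism, and the reduction lemma guarantees that this reparameterization changes nothing up to homotopy.

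With well-definedness in hand, the axioms follow by transferring the classical identities for cube maps through the reduction lemma. For the unit, concatenating a cube with the constant cube of Example~\ref{example:constant.homotopy} produces a reparameterization of the original, hence an LR homotopic cube. For associativity, the two bracketings differ only by a reparameterization of the concatenation coordinate, and reparameterizations are homotopic to the identity. For inverses, I would use the reverse cube of Example~\ref{example:reverse.homotopy}: the concatenation of $G$ with its reverse $\ovln G$ is of the form $G \circ \dif \kappa$ for the standard \emph{out-and-back} map $\kappa$, which is smoothly homotopic relative to the boundary to a constant map; precomposing $G$ with the corresponding homotopy of cube maps yields an LR homotopy to the constant cube, using that the differential of a constant map vanishes on tangent vectors so that $G$ sends it to the zero section. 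The source and target of $\Pi(\A)$ are read off from $\gamma(0)$ and $\gamma(1)$, composability matches the endpoint condition exactly, and restricting to spheres with a fixed basepoint $x$ makes every product defined, producing the group $\pi_n(\A, x)$.

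The main obstacle I anticipate is not the conceptual skeleton but the bookkeeping of boundary conditions. For $\pi_n(\A, x)$ one must check at each step that the sphere conditions (the vanishing of $\Fib\big(G(\partial/\partial t^i)\big)$ on the appropriate boundary faces together with the constancy of $\gamma$ on the boundary) are preserved by the reparameterizations, concatenations, and null-homotopies above, and that every homotopy used is $\partial[0,1]^n$-preserving in the precise sense of Definition~\ref{defn:LR.homotopy}. Verifying that the out-and-back null-homotopy for inverses respects these fiberwise constraints, rather than merely the underlying topological ones, is the most delicate point, and is exactly where the fiber-theoretic formulation of the boundary conditions must be handled with care.
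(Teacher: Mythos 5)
Your proposal is correct and takes essentially the same approach as the paper: the paper likewise reduces every axiom to a single reduction lemma (cubes precomposed with classically homotopic, boundary-respecting reparameterizations of the concatenation coordinate are boundary-preserving LR homotopic), uses laziness via a fixed $\tau$ to make concatenation well defined and compatible with homotopy classes, realizes units and associativity as explicit reparameterization identities such as $C^\tau \odot_n \til{C}_0^\tau = C^{\tau_2}$, and handles inverses by the out-and-back reparameterization being classically homotopic to the constant map $t \mapsto 0$. Your formulation with general cube maps $K \colon [0,1]^{n+1} \to [0,1]^n$ is only a mild generalization of the paper's one-coordinate reparameterizations, and the fiberwise boundary bookkeeping you flag as delicate is exactly what the paper also leaves at the level of a detailed sketch.
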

We will dedicate the next subsection to the proof of this theorem. The idea is to construct the usual homotopy group(oids) using only smooth maps from cubes \( [0,1]^n \to M \) and translate this construction to the language of LR structures and morphisms of LR structures.

Before proceeding to the proof of Theorem~\ref{theorem:lie.rinehart.groups.well.defined}, let us show that it implies Theorems~\ref{theorem:weinstein.groupoid} and \ref{theorem:partition.into.leaves} from the introduction.
\begin{proof}[Proof of Theorem~\ref{theorem:weinstein.groupoid}]
Given an LR morphism \( F \colon \A \to \B \) covering \( f \colon X \to Y \), and an LR-homotopy \( H \colon \C \times \X_{[0,1]} \to \A \) then \( F \circ H \) is clearly also a LR homotopy. Therefore, we can conclude that the associations \( \A \mapsto \pi_n(\A, x) \) and \( \A \mapsto \Pi(\A) \) is a functorial assignment.

Furthermore, when \( \Gamma_A \) and \( \Gamma_B \) are the sheaves of sections of Lie algebroids then an LR homotopies \( \B \times \X_{[0,1]} \) are in direct correspondence with Lie algebroid homotopies \( B \times T[0,1] \to A \). Therefore, our definition of algebroid fundamental groups and groupoid are the same as the definition which appears in Brahic-Zhu\cite{Brahic_2011} and Crainic-Fernandes~\cite{CrFeLie}. 
\end{proof}
\begin{proof}[Proof of Theorem~\ref{theorem:partition.into.leaves}]
Compatibility with the anchor map for maps \( \X_{[0,1]} \to \A \) directly implies that the equivalence classes on \( X \) induced by the isomorphism classes of \( \Pi(\A) \) contained inside of the leaves of \( \rho(\A) \subset \X_M \). 

On the other hand, the leaves of \( \rho(\A) \) are generated by the time 1 flows of vector fields in \( \A \). Therefore, suppose \( V \in \rho(\A) \) and assume that the flow \( \phi_V^t \colon M \to M \) exists for \( t \in [0,1] \). Then for each \( x_0 \in M \) we can define an LR-path \( G \) which covers \( \gamma(t) := \phi_V^t(x_0) \):
\[ G\left(\frac{\partial}{\partial t}\right) := 1 \otimes V \]
Therefore, the leaves of \( \rho(\A) \) are precisely the same as the orbits of \( \Pi(\A) \grpd M \).
\end{proof}

\begin{remark} Interestingly, we did not need to assume that the base space \( X \) was a smooth manifold in order to define the fundamental groups and the fundamental groupoid. On the other hand, these homotopy groups are constructed by studying maps into \( \A \) where the domain is indeed a smooth manifold. In other words, the groups \( \pi_n(\A, x) \) can only capture information about \( X \) and \( \A \) which can be `seen' by a map coming from a unit cube (smooth structure included). 
\end{remark}

\subsection{Proof of Theorem~\ref{theorem:lie.rinehart.groups.well.defined}}
 Let us establish some terminology. In this subsection, \( X \) is a locally ringed space over \( \mathbb{R} \) and \( \A\) is an LR structure on \( X \). We will denote the standard coordinates on the \( n \)-cube by \( t^1 , \ldots t^n \). Furthermore, for \( i = 1, \ldots , n \) and \(j = 0,1 \) let 
\[ f^i_j \colon [0,1]^{n-1} \to [0,1]^n  \qquad f^j_n (t^1, \ldots , t^{n-1} ) := (t^1, \ldots , t^{j-1}, i , t^{j+1} , \ldots , t^n )  \]
be the face inclusion maps. If \( \gamma \colon [0,1]^n \to X \) is a morphism of ringed spaces, the \emph{start} of \( \gamma \) along the \( t^j\)-axis is defined to be \( \gamma \circ f^0_j \). Similarly the \emph{end} of \( \gamma \) along the \( t^j \)-axis is defined to be \( \gamma \circ f^1_j \). By using the differential, these definitions extend also to LR cubes. If \( C \colon \SCn{n} \to \A \) is an LR \( n\)-cube then the start and end of \( C \) along the \( t^j\)-axis are 
\[  C \circ \dif f_j^0 \colon \SCn{n-1} \to \A  \quad \mbox{ and } \quad  C\circ \dif f_j^1 \colon \SCn{n-1} \to \A \]
respectively. By taking the \( t^j \)-axis to be the deformation parameter, any LR \( n \)-cube can be thought of as a homotopy (with free boundary conditions) from the start to the end of \( C \) along the \( t^j \)-axis. Since the start and end are defined precisely on the boundary, we also note that the start and endpoint is invariant up to boundary preserving homotopy. For any natural number \( n \) let \( \C^n(\A) \) denote the set of all homotopy classes of \( n \)-cubes. Then for all \( 1 \le i \le n \) we have well-defined functions:
\[ S_i \colon \C^n(\A) \to \C^{n-1}(\A) \qquad E_i \colon \C^n(\A) \to \C^{n-1}  \]
where \( S_i \) is projection to the start and \( E_i \) is projection to the end.

An LR \( n \)-cube is said to be lazy on the \( t^j \)-axis if it is locally a constant homotopy in an open neighborhood of \( \{ t^j = 0,1 \} \). Recall in Example~\ref{example:concatenation} we saw that lazy homotopies have a well-defined concatenation operation. We will use \( \odot_j \) to denote concatenation along the \( t^j \)-axis. 

We can now explain Theorem~\ref{theorem:lie.rinehart.groups.well.defined} as a corollary of the following, more general proposition. For the sake of space, we will not perform all of the calculations in complete detail but provide a detailed sketch of the argument.
\begin{proposition}
Suppose \( \A \) is an LR structure on \( X \). Take \( 1 \le j \le n \) and let \( [G_1] , [G_2] \in C^n(\A) \) be homotopy classes of LR \( n \)-cubes represented by \( G_1 \) and \( G_2 \) such that \( E_i([G_1]) = S_i([G_2]) \). If \( G_1 \) and \( G_2 \) are lazy along the \( j \)-axis let:
\[ [G_2 ] * [G_1] := G_2 \odot_j G_1  \]
We claim that the above operation makes \( \C^n(\A) \) into a well defined set theoretic groupoid over \( C^{n-1}(\A) \).
\end{proposition}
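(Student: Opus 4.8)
The plan is to run the classical argument that homotopy classes of paths assemble into the fundamental groupoid, but one axis at a time and with every witnessing homotopy transported from the combinatorics of cubes into the LR setting. The source and target maps are $S_j$ and $E_j$, which by the discussion preceding the statement are already well-defined on homotopy classes, so composability is exactly the condition $E_j[G_1] = S_j[G_2]$ in $\C^{n-1}(\A)$. It then remains to check four things: that $\odot_j$ descends to homotopy classes, that it is associative, that constant homotopies are two-sided units, and that the reverse homotopy furnishes inverses.

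The organizing device I would isolate first is a \emph{lifting lemma}: any smooth map of cubes $\sigma \colon [0,1]^N \to [0,1]^M$ induces an LR morphism $\X_{[0,1]^N} \to \X_{[0,1]^M}$ covering $\sigma$ (built from the differential $\dif\sigma$, exactly as in Examples~\ref{example:reverse.homotopy} and \ref{example:reparameterization}), so that precomposition carries an LR $M$-cube $G \colon \X_{[0,1]^M} \to \A$ to an LR $N$-cube. Every reparameterization and every standard homotopy used in the classical proof is such a smooth cube map, so this single observation lifts all of them simultaneously. In particular, using a boundary-constant reparameterization $\tau$ (Example~\ref{example:lazy.homotopy}) every class acquires a lazy representative, and the concatenation of lazy cubes is an honest LR morphism across the seam precisely because both factors are constant homotopies near the gluing face (Example~\ref{example:concatenation}).

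With the lifting lemma in hand, well-definedness proceeds as follows. Given $E_j[G_1] = S_j[G_2]$, I would first absorb the connecting homotopy: replacing $G_1$ by $K \odot_j G_1$, where $K$ is an $n$-cube (a homotopy along the $t^j$-axis) realizing $E_j(G_1) \simeq S_j(G_2)$, one may assume the seam faces agree strictly, so that $G_2 \odot_j G_1$ is literally defined. If $H_1$ and $H_2$ are homotopies $G_1 \simeq G_1'$ and $G_2 \simeq G_2'$ compatible along the shared face, then $H_2 \odot_j H_1$ — the concatenation performed in the homotopy family rather than within a single cube — is an LR homotopy witnessing $G_2 \odot_j G_1 \simeq G_2' \odot_j G_1'$. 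The unit and associativity axioms then reduce, via the lifting lemma, to the classical facts that $\mathrm{id}\odot_j G$, $G\odot_j\mathrm{id}$, $(G_3\odot_j G_2)\odot_j G_1$ and $G_3\odot_j(G_2\odot_j G_1)$ are related by boundary-constant reparameterizations of the interval, the constant homotopy of Example~\ref{example:constant.homotopy} supplying the units. For inverses I would take the reverse homotopy $\overline{G}$ of Example~\ref{example:reverse.homotopy} along the $t^j$-axis as the candidate inverse of $G$ and lift the standard contraction homotopy exhibiting $\overline{G}\odot_j G \simeq \mathrm{id}_{S_j(G)}$; here one checks that the sign flip $\dif r$ built into the reverse exactly cancels the anchor data so that the reparameterization collapsing $\overline{G}\odot_j G$ to a constant is again a smooth cube map and lifts.

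The main obstacle I anticipate is the well-definedness step — specifically, arranging the two homotopies $H_1, H_2$ to agree on the moving seam at all homotopy times and verifying that the glued object is a genuine \emph{smooth} LR morphism there. Everything else is a mechanical transport of classical cube identities through the lifting lemma, but the seam-matching and the smoothness-across-the-seam bookkeeping are where the laziness hypotheses and the fiber-vanishing boundary conditions $\Fib(\,\cdot\,) = 0$ must be deployed with care.
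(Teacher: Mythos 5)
Your proposal follows the same skeleton as the paper's proof: pass to lazy representatives via a boundary-constant reparameterization \( \tau \), prove well-definedness by concatenating the reparameterized homotopies \( H_2^\tau \odot_j H_1^\tau \), get units from the constant homotopy of Example~\ref{example:constant.homotopy}, inverses from the reverse cube of Example~\ref{example:reverse.homotopy} together with a collapsing reparameterization, and associativity from an explicit reparameterization identity. Your ``lifting lemma'' --- every smooth cube map \( \sigma \) induces an LR morphism via \( \dif \sigma \), so classical cube identities transport wholesale --- is precisely the engine the paper deploys instance by instance; its key intermediate fact is that if \( \rho_1, \rho_2 \colon [0,1] \to [0,1] \) are classically homotopic maps preserving \( \{0,1\} \), then \( C^{\rho_1} \) and \( C^{\rho_2} \) are boundary-preserving LR homotopic. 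Naming this as a single lemma is a cleaner packaging, not a different route.

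The one step that genuinely diverges is a problem: absorbing the connecting homotopy by replacing \( G_1 \) with \( K \odot_j G_1 \). The concatenation of Example~\ref{example:concatenation} is only defined when the seam faces agree on the nose (the glued morphism is prescribed as \( F_1 \circ \pi_1 \) near the seam for the \emph{common} face \( F_1 \)), and the paper's reading of the composability hypothesis is exactly that: one works with representatives whose faces strictly agree, and the homotopies \( H_1, H_2 \) witnessing \( [G_i] = [G_i'] \) are \emph{boundary preserving}, hence fiberwise constant on the seam, so \( H_2^\tau \odot_j H_1^\tau \) glues with no further matching required --- this is precisely the seam bookkeeping you flag at the end, and boundary preservation is what discharges it. If instead the faces are only assumed homotopic and you insert an arbitrary \( K \), then the class \( [G_2 \odot_j K \odot_j G_1] \) in general depends on the choice of \( [K] \), just as concatenating classical paths whose endpoints merely lie in the same path component depends on the connecting path; moreover \( K \odot_j G_1 \) is not boundary-preserving homotopic to \( G_1 \), so you would be computing the product of different classes than the ones you started with. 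Delete the \( K \)-absorption step and work with strictly matching lazy representatives; with that correction, the rest of your outline coincides with the paper's argument.
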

\begin{proof}
(Well defined): Without loss of generality, we can prove only the case where \( i = n \). For this claim to hold, we first need to prove that every element in \( \C^n(\A) \) is represented by a lazy homotopy. Let \( \tau \colon [0,1] \to [0,1] \) be a boundary fixing, non-decreasing function such that \( \tau \) is locally constant in a neighborhood of the boundary. Given an LR \(n\)-cube \(C \) let:
\[ C^\tau := C \circ ( \pr_1 \times \dif \tau ) \colon \SCn{n-1} \times \SC \to \A  \]
Then a simple computation reveals that \( C^\tau \) is lazy. We also claim that there exists a boundary preserving homotopy from \( C \) to \( C^\tau \). To see this, choose a classical boundary preserving homotopy \( \til \tau \colon [0,1] \times [0,1] \to [0,1] \) which goes from the identity map to \( \tau \). Now take:
\[  H := G (\pi_1 \times \dif \til{\tau} (\pi_2 \times \pi_3) ) \colon   \SCn{{n-1}} \times \SC \times \SC \to A \]
This exhibits an explicit boundary preserving homotopy from \( C \) to \( C^\tau \). Therefore, every element of \( \C^n(\A) \) is represented by a lazy homotopy (along the \( t^n \)-axis). In fact, this argument can be used to prove a more general fact, that is: If \( \rho_i \colon [0,1] \to [0,1] \) for \( i = 1,2 \) are classically homotopic functions such that \( \rho_i(\{ 0, 1 \}) \subset \{ 0, 1 \} \), then it follows that there is a boundary preserving LR homotopy from \( C^{\rho_1} \) to \( C^{\rho_2} \). 
 
Now we need to show that the concatenation operation respects boundary preserving homotopies. Let \( C_1 \) and \( C_2 \) be LR \( n \)-cubes such that \( E_n( [C_1]) = S_n(C_2) \). Assume that there exist \( C_1' \) and \( C_2' \) such that \( [C_1'] = [C_1] \) and \( [C_2'] = [C_2] \). Then we claim that there exists a boundary preserving homotopy from \( G^\tau_1 \odot_{n} G^\tau_2 \) to \( (G'_1)^\tau \odot_n (G'_2)^\tau \). 

To see this, let 
\[ H_i \colon \SCn{n-1} \times \SC \times \SC \to \A  \] 
be boundary preserving homotopies from \( C_i \) to \( C_i' \) for \( i = 1,2 \). Let \( H_i^\tau \) be the reparameterization of \( H \) (along the \( t^n \)-axis). Then it follows that \( H_i^\tau \) is a boundary preserving homotopy from \( G^\tau_i \) to \( (G'_i)^\tau\). Furthermore, an easy calculation shows that \( H^\tau_2 \odot_n H^\tau_1 \) is a boundary preserving homotopy from  \( G^\tau_1 \odot_{n} G^\tau_2 \) to \( (G'_1)^\tau \odot_n (G'_2)^\tau \). This shows that concatenation respects boundary preserving homotopies.

(Units): Now, we will show that \( * \) has unit elements. We will show the existence of right-units but thee proof of left units is very similar. Suppose \( C \) is an LR \(n \)-cube with which starts at \( C_0 \) along the \( t^n \)-axis. Let \( 1_{C_0} \) denote the LR \( n \)-cube which represents the constant homotopy from \(C_0 \) to \( C_0 \). We claim that \( [C] * [u_{C_0}] = [C] \) and therefore \( [u_{C_0}] \) is a right unit. Let \( \tau_{2} \colon [0,1] \to [0,1] \) be defined as follows:
\[ \tau_{2} (t) = \begin{cases}\tau(2t ) & t \in [0,1/2] \\ 1 & t \in (1/2,1] \end{cases} \]
Then a direct calculation shows that \( C^\tau \odot_n \til C_0^\tau = C^{\tau_{2}} \). Since reparameterization preserves the homotopy class, the claim follows.

(Inverses): Suppose \( C \) is a lazy LR \( n \)-cube. Let \( r(t) := 1-t \) and:
\[ \overline{C} := C ( \pi_1 \times (\dif r \circ \pi_2)) \colon \SCn{n-1} \times \SC \to \A   \]
If the start of \( C \) is \( C_0 \) we claim that \( [\overline{C}] * [C] = [u_{C_0}] \) and so \( [\overline{C}] \) is a left-inverse of \( [C] \). Consider:
\[ \tau^{rev}(t) := \begin{cases} \tau(2t) & t \in [0,1/2]  \\  \tau(-2t) & t \in (1/2,1] \end{cases}  \]
Using the definition of concatenation, we get that \( \overline{C}^\tau \odot_n C^\tau = ( \overline{C} \odot_n C)^{\tau_{rev}} \). Since there is a boundary preserving homotopy from \( \tau_{rev} \) to the trivial map \(t \mapsto 0 \), we conclude that there exists a boundary preserving homotopy from \( \overline{C} \odot_n C)^{\tau_{rev}}\) to a constant homotopy. The argument for right-inverses is similar.

(Associativity): 
Let \( C_1,C_2 \) and \( C_3 \) be lazy LR \( n \)-cubes with start and end points such that \( ([C_1] * [C_2]) * [C_3] \) is well-defined. It follows that \( [C_1] * ([C_2] * [C_3] ) \) is also well-defined but we must show that they are equal. Consider the function:
\[ \rho (t) \colon \begin{cases} \frac{\tau(2t)}{4} &  t \in [0,1/2] \\ \frac{\tau(4t-2) + 1}{4} & t \in (1/2,3/4]\\  \frac{ \tau(4t-3)+1}{2}  & t \in (3/4 , 1] \end{cases} \]
To get a sense of this function, notice that it is a reparameterization of the unit interval such that:  
\[ \rho([0,1/2]) = [0,1/4] \quad \rho([1/2,3/4]) = [1/4, 1/2] \quad \rho([3/4, 1]) = [1/2,1] \] 
Furthermore, on each of this intervals \( \rho \) uses a rescaled version of our standard reparameterization \( \tau \). The point is that a direct calculation shows that:
\[ (C_1^\tau \odot C_2^\tau) \odot C_3^\tau = ((C_1 \odot C_2) \odot C_3 )^\rho \]
Since reparameterizations preserve homotopy classes we conclude that \( * \) is associative.
\end{proof}

\bibliographystyle{plain}
\bibliography{MEfolbib.bib}

\end{document}